\newtheorem{theorem}{Theorem}[section]
\newtheorem{lemma}[theorem]{Lemma}
\newtheorem{proposition}[theorem]{Proposition}
\newtheorem*{claim*}{Claim}
\newtheorem*{conjecture*}{Conjecture}
\theoremstyle{definition}
\newtheorem{definition}[theorem]{Definition}
\theoremstyle{remark}   
\newtheorem{remark}[theorem]{Remark}
\newtheorem*{remark*}{Remark}
\theoremstyle{remark}
\title{Entropic curvature not comparable to other curvatures -- or is it?}
\author{Supanat Kamtue \and Shiping Liu \and Florentin M\"unch \and Norbert Peyerimhoff}
\date{}
\begin{document}

\maketitle

\abstract{In this paper we consider global $\theta$-curvatures of finite Markov chains with associated means $\theta$ in the spirit of the entropic curvature (based on the logarithmic mean) by Erbar-Maas and Mielke. As in the case of Bakry-\'Emery curvature, we also allow for a finite dimension parameter by making use of an adapted $\Gamma$ calculus for $\theta$-curvatures. We prove explicit positive lower curvature bounds (both finite- and infinite-dimensional) for finite abelian Cayley graphs. In the case of cycles, we provide also an upper curvature bound which shows that our lower bounds are asymptotically sharp (up to a logarithmic factor). Moreover, we prove new universal lower curvature bounds for finite Markov chains as well as curvature perturbation results (allowing, in particular, to compare entropic and Bakry-\'Emery curvatures). Finally, we present examples where entropic curvature differs significantly from other curvature notions like Bakry-\'Emery curvature or Ollivier Ricci and sectional curvatures.}

\tableofcontents

\section{Introduction}

In \cite{EM-12}, Erbar and Maas introduced in 2012 entropic curvature for finite Markov chains $(X,Q,\pi)$ with transition probabilities $Q: X \times X \to [0,1]$ and steady state $\pi: X \to [0,1]$ by way of a particular mean $\theta_{log}$, called the logarithmic mean (see also \cite{Mi-13} for related work). In this paper, we will always assume that our finite Markov chains are irreducible and reversible, and the transition probabilities $Q$ induce a canonical weighted graph structure on $X$, where edges correspond to positive transition probabilities. Conversely, any weighted graph $G=(V,E)$ with vertex measure $m: V \to (0,\infty)$ and symmetric edge weights $w: V \times V \to [0,\infty)$ give rise to a canonical finite irreducible, reversible Markov chain with $Q(x,y) = \frac{w_{xy}}{m(x)}$ for $x \sim y$. For details about this, see Subsection \ref{subsec:markchandmeans}.
 
The notion of entropic curvature is \emph{non-local} in nature and generalizes naturally to other means $\theta: [0,\infty) \times [0,\times) \to [0,\infty)$, like the arithmetic or the geometric mean. In the case of the arithmetic mean, the resulting curvature is closely related to Bakry-\'Emery curvature. This relation was well-known and mentioned at various places, and was further manifested by a modified $\Gamma$ calculus (given in \cite{KLMP-23}) for entropic curvature and more general $\theta$-curvatures. The $\Gamma$ operators are defined locally and are crucial in the definition of the Bakry-\'Emery curvature, which is defined \emph{locally} on the vertices of a finite Markov chain and which includes an additional dimension parameter. This approach via modified $\Gamma$ operators motivates also the introduction of a dimension parameter into entropic curvature and into more general $\theta$-curvatures. The parameters $K \in \mathbb{R}$ and $N \in (0,\infty]$ in the $CD_\theta(K,N)$-property of a finite Markov chain $(X,Q,\pi)$ represent lower and upper bounds on global curvature and dimension, respectively. Since we will make also use of the $\Gamma$ formalism in this paper, we provide a brief overview about it in Subsection \ref{subsec:AdGamma}.

\subsection{Our results}

Our main concern are entropic and $\theta$-curvature bounds for finite Markov chains. In Section \ref{sec:Cayley} we consider finite Markov chains with an underlying abelian Cayley graph structure and derive positive lower curvature bounds for both finite and infinite dimensions as well as upper curvature bounds for cycles. A combined result derived from the lower curvature bounds is presented in the following theorem.

\begin{theorem}
  Let $(X,Q,\pi)$ be an irreducible, reversible finite Markov chain such that the induced structure on $X$ is an abelian Cayley graph ${\rm{Cay}}(\Gamma,S)$ with finitely presented group $\Gamma$ and finite symmetric generating set $S$, and with the additional property
  $$ Q(x,sx) = Q(s^{-1}x,x) > 0 \quad \text{for all $x \in X$ and all $s \in S$.} $$
  Let $Q_{\min} := \min_{s \in S} Q(x,sx)$. Let $r:=\max_{s \in S} {\rm{ord}}(s)$ be the maximal order of the generators, and let $S_2 \subset S$ be the subset of generators of order $2$. Then $(X,Q,\pi)$ satisfies $CD_{ent}(K,N)$ for the following pairs of $(K,N)$:
    \begin{align*}
        (K,N) &= \left( \frac{Q_{\min}}{50 r^4},\infty \right), \\
        (K,N) &= \left( 0, |S|-0.07|S\backslash S_2| \right), \quad \text{and} \\
        (K,N) &= \left( \frac{Q_{\min}}{100 r^4}, 2|S|-0.14|S\backslash S_2| \right).
    \end{align*}
\end{theorem}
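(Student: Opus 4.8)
The plan is to reduce the global $CD_{ent}(K,N)$ property to a local computation on a single vertex, exploiting the vertex-transitivity of the Cayley graph, and then to establish the local curvature inequality by a carefully controlled perturbation/averaging argument over the generating set $S$. Since the graph is an abelian Cayley graph and the transition probabilities satisfy the stated symmetry $Q(x,sx)=Q(s^{-1}x,x)$, the local $\Gamma$-calculus data at every vertex looks the same up to the group action; so it suffices to verify the $CD_\theta$ inequality (in the adapted $\Gamma$ formalism recalled in Subsection~\ref{subsec:AdGamma}) at one base point, say the identity $e \in \Gamma$. I would first spell out the adapted $\Gamma_1$ and $\Gamma_2$ operators for the logarithmic mean $\theta_{log}$ at $e$, and observe that the relevant second-order term decomposes as a sum over generators $s \in S$, with the generators of order $2$ contributing differently from those of higher order (an order-$2$ generator $s$ gives $sx = s^{-1}x$, collapsing a ``two-step'' contribution that is otherwise available).

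The key steps, in order: (1) Invoke vertex-transitivity to pass from the global $CD_{ent}(K,N)$ statement to a single-vertex inequality involving $\Gamma_2^\theta$, $\Gamma_1^\theta$, and the Laplacian. (2) Normalize: scale so that the weights along each generator direction are governed by $Q_{\min}$, and bound the ``off-diagonal'' or mixed contributions (interactions between distinct generator directions) — here abelianness is what keeps these interactions benign, since $st = ts$ means the $2$-balls have a product-like combinatorial structure. (3) For the infinite-dimensional bound $(K,\infty)$, discard the dimension (CD-gradient) term and show $\Gamma_2^\theta \ge K\,\Gamma_1^\theta$ directly, extracting the factor $Q_{\min}/(50 r^4)$ from the worst-case single generator of order $r$ (this is where a cycle of length $r$ is essentially the extremal local model, matching the asymptotic sharpness mentioned in the abstract, hence the $r^4$ rather than $r^2$). (4) For the dimension bounds $(0,N)$ and $(K,N)$ with $N \approx 2|S| - 0.14|S\setminus S_2|$, track the dimension term $\frac{1}{N}(\Delta^\theta f)^2$ and show that each non-involutive generator contributes ``two units'' of dimension (forward and backward steps being genuinely distinct) while involutions contribute slightly less, yielding the $0.07$/$0.14$ deficits; the factor $2$ in $2|S| - 0.14|S\setminus S_2|$ reflects counting both orientations of each edge. (5) Combine (3) and (4) by a convexity/interpolation argument on the pair $(K,N)$ — the standard trick that $CD(K_1,\infty)$ and $CD(0,N_0)$ together imply $CD(\lambda K_1, N_0/(1-\lambda))$ type statements — to land the third pair from the first two, with the constant degrading from $50$ to $100$.

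The main obstacle I expect is Step~(3): obtaining an \emph{explicit} lower bound on $\Gamma_2^{\theta_{log}} - K\Gamma_1^{\theta_{log}}$ with a clean universal constant. The logarithmic mean is not algebraically tame — unlike the arithmetic mean (Bakry-\'Emery) case, there is no finite-order polynomial identity to exploit — so one must bound ratios of the form $\frac{\theta_{log}(a,b) - \text{(something)}}{\dots}$ uniformly. The realistic strategy is to use monotonicity and concavity properties of $\theta_{log}$ (e.g. $\sqrt{ab} \le \theta_{log}(a,b) \le \frac{a+b}{2}$ and quantitative refinements) to sandwich the $\Gamma$ operators between their geometric-mean and arithmetic-mean analogues, for which the single-generator (cycle) computation is more tractable, and then to optimize over the free parameters in the definition of $\Gamma_2^\theta$ (the choice of the auxiliary function/direction). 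The numerical constants $50$, $0.07$, $100$, $0.14$ are presumably the output of this optimization carried to just enough precision; I would not try to make them sharp, only to verify the chain of inequalities closes with room to spare, isolating the single worst generator and treating all cross terms as a controlled error of lower order in $r$.
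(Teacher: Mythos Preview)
Your plan has a fundamental gap at Step~(1). Entropic curvature is genuinely non-local: the integrated condition $\langle \rho, \Gamma_{2,\rho} f\rangle_\pi \ge K\langle \rho, \Gamma_\rho f\rangle_\pi$ does \emph{not} reduce to a pointwise inequality $\Gamma_{2,\rho}f(x) \ge K\,\Gamma_\rho f(x)$, and vertex-transitivity cannot rescue this because the density $\rho$ is not group-invariant. The pointwise inequality is in fact false for any $K>0$: on a single $\delta$-orbit, take $f$ affine and $\log\rho$ affine near a given vertex $x$; then both the second-difference contribution $(\Delta_\delta f(x))^2$ and the contribution coming from the $b$-function $b(\alpha,\beta,\gamma)=\partial_1\theta(\beta,\gamma)\alpha+\partial_2\theta(\beta,\gamma)\beta-\theta(\alpha,\beta)$ (which vanishes exactly when $\alpha\gamma=\beta^2$) are zero at $x$, while $\Gamma_\rho f(x)>0$. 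Your sandwiching idea in Step~(3) would also discard the very term that does the work: for the arithmetic mean one has $b\equiv 0$, so comparing $\theta_{log}$ to $\theta_a$ throws away the positivity, and indeed the Bakry--\'Emery curvature of a long cycle is zero, not $\gtrsim r^{-4}$.

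The paper's argument is not vertex-by-vertex but \emph{orbit-by-orbit}: one writes $\mathcal{A}_\rho(f)=\sum_{\delta\in S}\sum_{\mathcal{O}\in X/\sim_\delta} A(\delta,\mathcal{O})$ and $\mathcal{B}_\rho(f)\ge\sum_{\delta,\mathcal{O}} B(\delta,\mathcal{O})$, and proves $B(\delta,\mathcal{O})\ge \tfrac{c(\delta)}{50 r^4}A(\delta,\mathcal{O})$ on each $\delta$-orbit separately. On a fixed orbit one locates the edge $m\sim\delta m$ maximising $\widehat\rho(m,\delta m)(\nabla_\delta f(m))^2$, takes the maximal interval $I$ around it on which $\rho\ge e^{-2}\widehat\rho(m,\delta m)$, and runs a dichotomy: either $\nabla_\delta f$ varies by more than $\tfrac12|\nabla_\delta f(m)|$ somewhere on $I$ (then $\sum_I(\Delta_\delta f)^2$ is large by Cauchy--Schwarz), or $\nabla_\delta f$ is nearly constant and nonzero on $I$, forcing $I\neq\mathcal{O}$ since increments over the whole orbit sum to zero, so $\log\rho$ must drop by $\ge 2$ at $\partial I$ and $\sum_I(\Delta_\delta\log\rho)^2$ is large, which Proposition~\ref{prop: b-function}(d) converts into a lower bound on the $b$-term. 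Each case produces two factors of order $r^{-2}$ (one from Cauchy--Schwarz over an interval of length $\le r$, one from a pigeonhole/gradient step), explaining $r^{-4}$. The $(0,N)$ bound is a separate Cauchy--Schwarz argument using Proposition~\ref{prop: b-function}(f),(g), and the third pair follows from the first two by the trade-off in \cite[Remark~3.11]{KLMP-23} with $\alpha=1/2$; your Step~(5) is correct in spirit, though the factor $2$ in $2|S|-0.14|S\setminus S_2|$ is this trade-off doubling, not an edge-orientation count.
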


In fact, the third pair $(K,N)$ in the theorem is deduced by the first two pairs by using the trade-off argument given in \cite[Remark 3.11]{KLMP-23} (with $\alpha=1/2$). The first two pairs are the results presented in Theorems \ref{thm:abellowbdinfdim} and \ref{thm:abellowbdfindim} in Subsections \ref{subsec:lowbdinf} and \ref{subsec:lowbdfin}, respectively. Note that $CD_{ent}(0,|S|-0.07|S\backslash S_2|)$ has a slightly better dimension parameter than the corresponding Bakry-\'Emery curvature-dimension condition $CD_a(0,|S|)$ for these abelian Cayley graph Markov chains (see Remark \ref{rem:comBEent}).

Non-negativity of entropic curvature for these abelian Cayley graph Markov chains was already proved by \cite[Proposition 5.4]{EM-12}. In fact, they formulated this result for Markov chains with certain \emph{mapping representations}, which is equivalent to the abelian Cayley graph structure (see Theorem \ref{thm:maprepabcay} in Subsection \ref{subsec:specmap}). The concept of a mapping representation, as introduced in \cite[Definition 5.2]{EM-12}, follows in spirit the conditions given in the earlier paper \cite{CDPP-09}. This earlier paper studies a Bochner-Bakry-\'Emery approach to the Modified Logarithmic Sobolev Inequality (MLSI) and considers particular examples: various birth-death processes and zero range processes, and Bernoulli-Laplace models which are equivalent to random walks on Johnson graphs. 

Moreover, abelian Cayley graph Markov chains have also non-negative Bakry-\'Emery and Ollivier Ricci curvatures (see, e.g., \cite{cushing2021curvatures} and references therein). The above theorem stating strictly positive entropic curvature for these Markov chains is surprising, since cycles of large enough diameter have always vanishing Bakry-\'Emery and Ollivier Ricci curvature. Moreover, we cannot deduce vanishing first homology from strictly positive entropic curvature (in contrast to Bakry-\'Emery and Forman curvature, see \cite{KMY-21,MR-20,For-03}). 

The following theorem, providing an upper curvature entropic curvature bound for cycles, shows that the lower curvature bound in the previous theorem is asymptotically sharp up to a factor $c (\log r )^2$. This result is a reformulation of Theorem \ref{thm:circleentbd} in Subsection \ref{subsec: uppbd-cycle}.

\begin{theorem} Let $G$ be a cycle graph of $4r$ vertices with $Q(x,y) = 1/2$ for neighbours $x \sim y$. Assume $r \ge 4$. This Markov chain does not satisfy $CD_{ent}(K,\infty)$ for any
$$ K > \frac{2500 (\log r)^2}{r^4}. $$
\end{theorem}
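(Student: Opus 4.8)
\emph{Strategy.} Recall from \cite{EM-12} (and its reformulation through the modified $\Gamma$-calculus in \cite{KLMP-23}) that a finite Markov chain satisfies $CD_{ent}(K,\infty)$ if and only if the Erbar--Maas Hessian-of-entropy inequality $\mathcal{B}(\rho,\psi)\ge K\,\mathcal{A}(\rho,\psi)$ holds for every positive density $\rho$ and every $\psi\colon X\to\mathbb{R}$, where $\mathcal{A}(\rho,\psi)=\tfrac12\sum_{x,y}(\psi(x)-\psi(y))^2\,\widehat\rho(x,y)\,Q(x,y)\pi(x)$ is the squared metric speed, $\widehat\rho(x,y)=\theta_{log}(\rho(x),\rho(y))$, and $\mathcal{B}$ is the associated second-order action. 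Thus it suffices to exhibit, on the $4r$-cycle, a single pair $(\rho,\psi)$ with $\mathcal{B}(\rho,\psi)<\frac{2500(\log r)^2}{r^4}\,\mathcal{A}(\rho,\psi)$ — ideally even $\mathcal{B}(\rho,\psi)\le 0<\mathcal{A}(\rho,\psi)$ after a harmless correction. The first observation is that the uniform density $\rho\equiv1$ does not suffice here: then $\widehat\rho\equiv1$, $\mathcal{A}$ is the Dirichlet form and $\mathcal{B}$ a Laplacian--Bakry--\'Emery quantity, and the eigenfunction $\psi(x)=\cos(\tfrac{2\pi x}{4r})$ only yields the weaker bound $K\lesssim\lambda_1\asymp r^{-2}$. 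The two extra powers of $r$ must come from a genuinely non-uniform $\rho$ that is small on a long arc, exploiting the non-locality of entropic curvature and the degeneracy of the logarithmic mean near $0$.

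\emph{The test data.} I would take $\rho$ with a symmetric profile adapted to the $\mathbb{Z}/4$-symmetry of $C_{4r}$: of size $\asymp1$ and constant on two opposite arcs of length $\asymp r$, and decaying geometrically, $\rho(k)\asymp\epsilon^{\,\mathrm{dist}(k,\ \mathrm{well})/r}$, down to a minimum value $\epsilon\asymp r^{-2}$ on the two connecting arcs; and $\psi$ the matching transport potential (essentially the initial velocity of the $\mathcal{W}_{log}$-geodesic from $\rho$ to a rotation of $\rho$), so that $\nabla\psi$ is supported on the transition arcs and varies on scale $1/r$. The geometric profile is what produces the logarithms: along a transition arc $|\log\rho(x)-\log\rho(y)|$ accumulates to $\asymp\log(1/\epsilon)\asymp\log r$, so the $\log\rho$-differences entering $\mathcal{B}$ contribute factors of order $\log r$. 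At the same time $\widehat\rho$ behaves like a geometric-type mean and is of size $\asymp r^{-1}$ on the transition arcs, yielding one factor $r^{-2}$, while the $1/r$-scale variation of $\psi$ yields the other; together these fix the scale $r^{-4}(\log r)^2$.

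\emph{The estimates.} The computation splits into (i) a lower bound for $\mathcal{A}(\rho,\psi)$, obtained by keeping only the contribution of the transition arcs, where $\widehat\rho\,(\nabla\psi)^2$ is under control; and (ii) an upper bound for $\mathcal{B}(\rho,\psi)$, obtained by expanding $\mathcal{B}$ into its constituent sums: the terms supported on the slowly varying parts are $\asymp r^{-4}$ small, while the a priori large terms supported on the transition arcs cancel to leading order precisely because $\rho$ is geometric there and $\psi$ is matched to it — the same mechanism by which long cycles have vanishing discrete $\Gamma_2$-curvature, now carried out on a non-constant background. Replacing the discrete sums by Riemann integrals with $O(1/r)$ relative error and optimizing the arc lengths, the decay rate, and $\epsilon$ then yields the explicit constant $2500$; the hypothesis $r\ge4$ is what makes these error terms genuinely negligible.

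\emph{Main obstacle.} I expect the crux to be exactly the cancellation in $\mathcal{B}$: its dominant transition-arc contributions are individually of order much larger than $r^{-4}$, so everything rests on their near-exact cancellation, which pins down the geometric shape of $\rho$ and forces a careful simultaneous choice of $\psi$; moreover this cancellation must be quantified with fully explicit error bounds (not mere asymptotics) to reach a clean conclusion valid for all $r\ge4$. A secondary, more routine point is to keep $\rho$ strictly positive — so $\epsilon$ should be taken polynomially small in $r$ rather than sent to $0$ — so that one remains in the regime where $CD_{ent}$ is defined while still creating enough degeneracy of the logarithmic mean.
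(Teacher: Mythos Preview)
Your high-level strategy --- exhibit a single pair $(\rho,\psi)$ with small $\mathcal{B}_\rho(\psi)/\mathcal{A}_\rho(\psi)$ --- is exactly right, and you correctly identify that the non-uniformity of $\rho$ is where the extra two powers of $r$ must come from. However, the specific construction you sketch and its scaling heuristics are not correctly calibrated, and the ``near-exact cancellation'' you flag as the crux is in fact entirely avoidable.

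The paper takes a simpler route that sidesteps cancellation. The test function is a tent: $f(k)=k$ on $[-r,r]$ and slope $-1$ on the complementary arc, so that $|\nabla f|=1$ on \emph{every} edge and $\Delta f$ is supported at exactly the two corners $k=\pm r$. The density is a single Gaussian bump $\rho(k)=\exp\bigl(-4(k/r)^2\log r\bigr)$. In the Erbar--Maas decomposition $\mathcal{B}_\rho(f)=(T_1+T_3)+T_4$: the term $T_1+T_3$ involves $(\Delta f)^2$ and is therefore concentrated at $\pm r$, where $\rho\asymp r^{-4}$, giving $T_1+T_3\lesssim r^{-4}$ directly; the term $T_4$ is a sum of $(\nabla f)^2\,b_\rho(x-1,x,x+1)$, and since $b_\rho(x-1,x,x+1)+b_\rho(x+1,x,x-1)\lesssim\rho(x)\bigl(\log\tfrac{\rho(x-1)\rho(x+1)}{\rho(x)^2}\bigr)^2$ (a pointwise estimate on the logarithmic mean) and the second log-difference of this Gaussian is the constant $-8(\log r)/r^2$, one gets $T_4\lesssim (\log r)^2 r^{-4}\sum_x\rho(x)$. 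Meanwhile $\mathcal{A}_\rho(f)\gtrsim\sum_x\rho(x)$ because $|\nabla f|\equiv 1$ and adjacent $\rho$-ratios stay bounded. No cancellation, no matching of $\psi$ to $\rho$, no arc-length optimization.

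Two concrete issues with your scheme. First, the heuristic $\widehat\rho\asymp r^{-1}$ on the transition arcs is not right: since adjacent $\rho$-values have ratio close to $1$, one has $\widehat\rho(x,x+1)\approx\rho(x)$, which ranges from $\asymp 1$ down to $\epsilon$ along the arc rather than sitting uniformly at $r^{-1}$; a back-of-the-envelope recount with your piecewise-geometric $\rho$ and $\epsilon\asymp r^{-2}$ does not obviously produce the exponent $r^{-4}$. Second, restricting $\nabla\psi$ to the transition arcs discards the bulk of $\mathcal{A}_\rho$ (which is largest where $\rho$ is large), whereas the paper's choice $|\nabla f|\equiv 1$ keeps all of it; that is precisely what lets the paper place the only two points where $\Delta f\neq 0$ in the region where $\rho\asymp r^{-4}$, harvesting the full $r^{-4}$ factor without any cancellation argument.
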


In Section \ref{sec:universal_bd_perturbed}, we consider general finite Markov chains and prove the following explicit lower entropic curvature bound (see Theorem \ref{thm:univgenlowbondfindim}; in the case of infinite dimension, we have a slightly better result, see Theorem \ref{thm:univgenlowbondinfdim}).

\begin{theorem}
  Let $(X,Q,\pi)$ be an irreducible, reversible finite Markov chain with 
  $$ 
  Q_{\min} := \min_{x \sim y} Q(x,y).
  $$
  Then $(X,Q,\pi)$ satisfies $CD_{ent}(-K,N)$ with
  $$ K = \frac{1}{2} + \frac{4}{Q_{\min}} \left( 1 + \frac{4}{N^2} \right) $$
  for all $N \in (0,\infty]$.
\end{theorem}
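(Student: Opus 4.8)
The plan is to establish the $CD_{ent}(-K,N)$ inequality by working with the modified $\Gamma$-calculus for $\theta$-curvatures as developed in \cite{KLMP-23}. Recall that, for the logarithmic mean $\theta_{log}$, the curvature-dimension condition $CD_{ent}(-K,N)$ at a point $x$ amounts to a pointwise inequality of the form
\[
  \Gamma_2^\theta(f)(x) \;\ge\; -K\, \Gamma^\theta(f)(x) \;+\; \frac{1}{N}\bigl(\Delta f(x)\bigr)^2
\]
for all functions $f$, where $\Gamma^\theta$ and $\Gamma_2^\theta$ are the $\theta$-adapted gradient and iterated gradient forms. The first step is to record the relevant algebraic identities for $\Gamma^\theta$ and $\Gamma_2^\theta$ from \cite{KLMP-23}, in particular expressing $\Gamma_2^\theta(f)(x)$ as the sum of a ``$\Gamma_2$-like'' principal term involving two-step transitions and correction terms coming from the discrepancy between the logarithmic mean and the arithmetic mean along each edge. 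The key analytic input is that the logarithmic mean satisfies $\theta_{log}(a,b) \le \frac{a+b}{2}$ and is comparable to it up to universal constants on the relevant range, which allows bounding the $\theta$-correction terms by a fixed multiple of $\Gamma^\theta(f)(x)$, with the multiple controlled by $1/Q_{\min}$.

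Next I would isolate the two structurally different contributions to $\Gamma_2^\theta$: the ``diagonal'' part (which for the arithmetic mean gives the Bakry--\'Emery $\Gamma_2$ and is bounded below using the trivial estimate $\Gamma_2 \ge \frac{1}{2}\Delta(\Gamma f) - \Gamma(\Delta f, f)$ together with non-negativity of certain sums of squares) and the ``off-diagonal'' part coming from triangles and non-reversible pieces of the chain. The constant $\frac12$ in the statement should emerge from the universal lower bound $\Gamma_2 \ge \frac12 \Gamma f$-type estimate that holds on any graph (essentially because $2\Gamma_2(f) \ge (\Delta f)^2 / \deg + \dots$, or more elementarily from the fact that the local structure always contributes at least $-\frac12\Gamma f$), while the $\frac{4}{Q_{\min}}(1 + 4/N^2)$ term should come from bounding all the genuinely non-local/mean-discrepancy corrections: each such term is a product of increments of $f$ along edges with weights at least $Q_{\min}$, so dividing and completing the square produces the factor $1/Q_{\min}$, and the $4/N^2$ factor arises when one reserves a fraction of $\Gamma_2$ to produce the $\frac1N(\Delta f)^2$ term on the right-hand side and must pay for it in the cross terms via Young's inequality (the $N^2$ rather than $N$ reflecting that $(\Delta f)^2$ enters quadratically in the increments).

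Concretely, the computation I would carry out is: write $\Gamma_2^\theta(f)(x) + K\Gamma^\theta(f)(x) - \frac1N(\Delta f(x))^2$ as a quadratic form in the edge-increments $\nabla_{xy} f = f(y) - f(x)$, and show that with the stated $K$ this quadratic form is a sum of (i) manifestly non-negative terms (squares arising from the local combinatorial structure, as in the Bakry--\'Emery diagonal estimate), and (ii) terms that are absorbed after applying Young's inequality $2ab \le \varepsilon a^2 + \varepsilon^{-1} b^2$ with $\varepsilon$ chosen proportional to $Q_{\min}$ and $N$. I expect the main obstacle to be the bookkeeping of the $\theta$-correction terms: the logarithmic mean does not obey nice product or chain rules, so the extra terms in $\Gamma_2^\theta$ relative to $\Gamma_2$ must be estimated crudely but uniformly, and one must be careful that summing these crude bounds over all edges and all ``second neighbours'' does not introduce a dependence on the size of $X$ or on the maximal degree — the saving grace is that every term carries a factor $Q(x,\cdot)$ which, after factoring out $Q_{\min}$, leaves a sub-probability sum that is bounded by $1$. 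Once this uniform absorption is achieved at every vertex $x$, the pointwise $CD_{ent}(-K,N)$ inequality holds, and since the chain is irreducible and reversible this is exactly the global curvature-dimension condition claimed.
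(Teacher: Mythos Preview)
Your plan has a structural gap: you treat $CD_{ent}(-K,N)$ as a \emph{pointwise} inequality at each vertex, of the form $\Gamma_2^\theta(f)(x) \ge -K\Gamma^\theta(f)(x) + \frac{1}{N}(\Delta f(x))^2$. That is not what the condition means. The defining inequality is the \emph{integrated, $\rho$-dependent} one
\[
  \langle \rho, \Gamma_{2,\rho} f\rangle_\pi \;\ge\; \frac{1}{N}\langle \rho, (\Delta f)^2\rangle_\pi \;-\; K\,\langle \rho, \Gamma_\rho f\rangle_\pi
  \qquad\text{for all }\rho\in\mathcal{P}_*(X),\ f\in\mathbb{R}^X,
\]
where the operators $\Gamma_\rho,\Gamma_{2,\rho}$ themselves depend on $\rho$ through $\partial_1\theta_{log}(\rho_x,\rho_y)$ along edges. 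Your outline never mentions $\rho$, and the reduction to a vertex-wise statement is not available here. In particular, your proposed ``key analytic input'' --- that $\theta_{log}(a,b)\le\frac{a+b}{2}$ and is comparable to it up to universal constants --- fails: the ratio $\theta_{log}(a,b)/\frac{a+b}{2}$ tends to $0$ as $a/b\to 0$, so there is no uniform comparability over $\rho$, and the plan to bound ``$\theta$-correction terms by a fixed multiple of $\Gamma^\theta(f)(x)$'' cannot succeed by such estimates. Also, the $\frac12$ in $K$ does not come from a ``$\Gamma_2\ge\frac12\Gamma$-type'' estimate.

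The paper's argument proceeds differently. It writes $\mathcal{B}_\rho(f)=\tfrac12\langle P\rho,\Gamma_\rho f\rangle_\pi-\tfrac12\mathcal{A}_\rho(f)-\mathcal{A}_\rho(f,\Delta f)$ (the $-\tfrac12\mathcal{A}_\rho(f)$ is the source of the $\tfrac12$ in $K$), expands each of $\langle P\rho,\Gamma_\rho f\rangle_\pi$, $\mathcal{A}_\rho(f)$, $\mathcal{A}_\rho(f,\Delta f)$ and $\langle\rho,(\Delta f)^2\rangle_\pi$ as sums over triples $x\sim y\sim z$ with weights $q_{xyz}=Q(y,x)Q(y,z)\pi(y)$, and reduces to a \emph{termwise} inequality $C_1A^2+C_2B^2\ge 2C_3AB$ in $A=f(y)-f(x)$, $B=f(y)-f(z)$. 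The coefficients $C_i$ are $\rho$-dependent, involving $b_0(\alpha,\beta,\gamma):=\partial_1\theta(\beta,\gamma)\alpha+\partial_2\theta(\beta,\gamma)\beta$ evaluated at $\rho$-values; the discriminant condition $C_1C_2\ge C_3^2$ is then verified via the pair of inequalities (specific to $\theta_{log}$)
\[
  b_0(\gamma,\beta,\alpha)\theta(\gamma,\beta)+b_0(\alpha,\beta,\gamma)\theta(\alpha,\beta)\ \ge\ \theta(\alpha,\beta)^2+\theta(\gamma,\beta)^2
  \qquad\text{and}\qquad
  \ge\ \tfrac12\,\beta^2.
\]
The first controls the $\mathcal{A}_\rho(f,\Delta f)$ contribution (giving the $4/Q_{\min}$), the second controls the $\frac1N\langle\rho,(\Delta f)^2\rangle_\pi$ contribution (giving the extra $16/(Q_{\min}N^2)$). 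The second inequality is a non-trivial analytic fact about the logarithmic mean, proved separately; nothing in your outline supplies a substitute for it.
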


It was already shown in \cite[Theorem 4.1]{Mi-13} that the entropic curvature of every finite Markov chain is greater than $- \infty$. Our theorem provides an explicit lower bound and also holds under the stronger finite dimension assumption. The main ideas of our theorem were subsequently used to derive a lower bound for the intertwining curvature in \cite{MWZ-24}.  

The next part of the paper is concerned with perturbation results, comparing curvatures of pairs of Markov chains with associated means. The results are log-Lipschitz estimates. They allow to compare curvatures (like entropic curvature and Bakry-\'Emery curvature) associated to restricted probability densities with bounds on the logarithmic gradients (see Theorem \ref{thm:perturb1} in Subsection \ref{subsec:perturb}). The following version 
allows for different Markov chains with the same topology and the same mean (see Theorem \ref{thm:perturb2}). It is formulated in the terminology of weighted graphs.

\begin{theorem} Let $G=(V,E)$ be a connected finite graph with two choices of vertex measures $m, \widetilde m: B \to (0,1]$ and symmetric edge weights $w, \widetilde w: V \times V \to [0,1]$ satisfying $m(x) \ge \sum_y w(x,y)$ and $\widetilde m(x) \ge \sum_y \widetilde w(x,y)$. Let $(X,Q,\pi)$ and $(X,\widetilde Q,\widetilde \pi)$ be the Markov chains associated to $(V,E,m,w)$ and $(V,E,\widetilde m,\widetilde w)$, respectively.  
Assume there exists $\epsilon > 0$ such that
$$ \sup_{x \sim y} \left\vert \log \frac{\widetilde w_{xy}}{w_{xy}} \right \vert \le \epsilon \quad \text{and} \quad \sup_{x \in X} \left\vert \log \frac{\widetilde m(x)}{m(x)} \right\vert \le \epsilon. $$
Let $\theta$ be an arbitrary mean. Then we have 
$$ | K_\theta(X,Q,\pi)) - K_\theta(\widetilde X,\widetilde Q,\widetilde \pi) | \le 27 \epsilon \left( 1 + \frac{8}{Q_{\rm{min}}} \right), $$
where 
$$Q_{\rm{min}} := \inf_{x \sim y} \min\{ Q(x,y),\widetilde Q(x,y)\}.$$
\end{theorem}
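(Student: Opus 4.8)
The plan is to deduce this theorem from Theorem~\ref{thm:perturb1} (the log-Lipschitz perturbation estimate for restricted probability densities), reducing the present statement about two Markov chains with a common topology to the setting already handled there. The idea is that changing the vertex measure $m$ and the edge weights $w$ is, up to the relevant logarithmic gradients, the same as modifying the reference density against which the $\theta$-curvature is computed: the ratios $\widetilde w_{xy}/w_{xy}$ and $\widetilde m(x)/m(x)$ play exactly the role of the density perturbation in the earlier theorem. So the first step is to write both $K_\theta(X,Q,\pi)$ and $K_\theta(\widetilde X,\widetilde Q,\widetilde\pi)$ as curvature quantities on a \emph{single} fixed weighted graph, interpreting one of them as a curvature computed with respect to a perturbed density (or perturbed measure), and then invoke Theorem~\ref{thm:perturb1} with the perturbation parameter controlled by $\epsilon$.

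The key steps, in order, are: (1) Fix the reference structure to be $(V,E,m,w)$ and express $(X,\widetilde Q,\widetilde\pi)$ in terms of logarithmic deviations $a_x := \log(\widetilde m(x)/m(x))$ and $b_{xy} := \log(\widetilde w_{xy}/w_{xy})$, each bounded in absolute value by $\epsilon$ by hypothesis. (2) Track how these deviations enter the $\theta$-curvature functional — through the transition probabilities $\widetilde Q(x,y) = \widetilde w_{xy}/\widetilde m(x)$ and the stationary measure $\widetilde\pi$; since $\log\widetilde Q(x,y) - \log Q(x,y) = b_{xy} - a_x$, the logarithmic gradient of the density change along any edge is at most $2\epsilon$, and the $\pi$-weights shift by at most a factor $e^{2\epsilon}$. (3) Feed these bounds into the log-Lipschitz estimate of Theorem~\ref{thm:perturb1}, whose constant is of the form $c(1 + c'/Q_{\min})$; chasing the numerical constants (the factor $2$ from the gradient of $b_{xy}-a_x$, the contribution from the measure change, and whatever slack is needed so that $Q_{\min}$ may be taken as the infimum over \emph{both} chains) yields the stated bound $27\epsilon(1 + 8/Q_{\min})$. (4) Note that the estimate is uniform over all means $\theta$ because Theorem~\ref{thm:perturb1} is, so no mean-specific analysis is needed.

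The main obstacle I expect is step~(3): making sure the perturbation framework of Theorem~\ref{thm:perturb1}, which is phrased for a \emph{fixed} Markov chain with two restricted densities, genuinely captures a \emph{change of the underlying weights and measure}, rather than only a change of density on a fixed chain. One has to check that the passage $(m,w)\leadsto(\widetilde m,\widetilde w)$ factors as (change of density) $\circ$ (change of measure) in a way compatible with the reversibility normalization, so that the earlier theorem applies verbatim; in particular the stationary measures $\pi,\widetilde\pi$ must be related by the product of the edge- and vertex-perturbations, and one should verify that $Q_{\min}$ as defined here (the infimum over both chains, and over ordered pairs of neighbours) is the right quantity to plug into the earlier bound — possibly at the cost of replacing $Q_{\min}$ by $e^{-2\epsilon}Q_{\min}$ in intermediate steps and absorbing the loss into the explicit constants $27$ and $8$. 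Once the bookkeeping of constants is done carefully, the rest is a direct substitution. If a clean reduction to Theorem~\ref{thm:perturb1} turns out to be awkward, the fallback is to re-run the proof of that theorem directly in the two-weighted-graphs setting, where the only new feature is that the edge-weight ratios and vertex-measure ratios appear separately but are each bounded by $\epsilon$; the log-Lipschitz mechanism — differentiating the curvature along a one-parameter interpolation and bounding the derivative in terms of the logarithmic gradients — goes through unchanged.
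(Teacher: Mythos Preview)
Your plan rests on a misreading of Theorem~\ref{thm:perturb1}. That theorem is \emph{already} a comparison between two different Markov chains $(X,Q,\pi)$ and $(X,\widetilde Q,\widetilde\pi)$ (with possibly different means); it is not a statement about two densities on a fixed chain, and its conclusion concerns the \emph{restricted} curvatures $K_\delta$ of Definition~\ref{def:Kdelta}, not $K_\theta$. Consequently your central idea, that passing from $(m,w)$ to $(\widetilde m,\widetilde w)$ can be recast as changing the density $\rho$ on a single chain, is simply wrong: the perturbation of $w$ and $m$ changes $Q$, $\pi$, $\Delta$, $\Gamma_\rho$, etc.\ simultaneously, and there is no single $\rho$-shift that captures this. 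Even if one tried to salvage the route by applying Theorem~\ref{thm:perturb1} with $\theta=\widetilde\theta$ and letting $\delta\to\infty$, one would still need $\|\log(\widetilde Q/Q)\|_\infty$ to be controlled \emph{including the diagonal} $Q(x,x)$, and the hypotheses on $w,\widetilde w,m,\widetilde m$ give no bound on $\log(\widetilde Q(x,x)/Q(x,x))$ in general. Your fallback of ``differentiating along a one-parameter interpolation'' is also not the mechanism at work here.

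The paper's proof is more hands-on. First it rescales both vertex measures by the same factor to force $Q(x,x)\ge\frac12$, so that the diagonal ratio can be bounded; this rescaling costs a factor of $2$ in both $K_\theta$ and $Q_{\min}$, which is why the final constants are $(1+8/Q_{\min})$ rather than $(1/2+2/Q_{\min})$. Then it works directly with the representation of $\mathcal{B}_\rho(f)=\tfrac12\langle P\rho,\Gamma_\rho f\rangle_\pi-\tfrac12\mathcal{A}_\rho(f)-\mathcal{A}_\rho(f,\Delta f)$ via the kernels $q_{xyz}=Q(y,x)Q(y,z)\pi(y)$: the hypotheses give $|\log(\widetilde q_{xyz}/q_{xyz})|\le 3\epsilon$, which yields multiplicative comparisons of $\langle P\rho,\Gamma_\rho f\rangle_\pi$, $\mathcal{A}_\rho(f)$ and $[\mathcal{A}_\rho]^\pm(f,\Delta f)$ between the two chains. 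The key step is then to add a multiple $t$ of inequality~\eqref{eq:infcurvlowbd} (Lemma~\ref{lem:univlowbduseful} with $C=2$) so as to absorb the uncontrolled term $|\mathcal{A}_\rho|(f,\Delta f)$; choosing $t=2e^{3\epsilon}-2$ matches the coefficients of $\langle P\rho,\Gamma_\rho f\rangle_\pi$ and $\mathcal{A}_\rho(f,\Delta f)$ and produces the bound $|K-\widetilde K|\le 27\epsilon(1/2+2/Q_{\min})+O(\epsilon^2)$. Finally the $O(\epsilon^2)$ is removed by a bootstrap: the estimate just proved shows that $K_\theta$ is locally Lipschitz in the log-weights, and Lipschitz functions satisfying an estimate up to $O(\epsilon^2)$ satisfy it exactly.
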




\begin{table}[h]
    \centering
    \begin{tabular}{|c|c c c c|}
    \hline
    & ENT & BE & ORC & SEC\\
    \hline
    weighted 3-point graph & -- & + & + & -- 
    \\
    perturbed $C_6$ & + & -- & -- & -- 
    \\
    Layer-rescaled prism $C_5\times K_2$ & -- & -- & + & 0 \\
    \hline
    \end{tabular}
    \caption{Examples with various curvature signs. ``ENT'' stands for entropic curvature $K_{ent}$, ``BE'' stands for global Bakry-\'Emery curvature lower bound $K_{\theta_a}$ (See Definition \ref{defn: CDtheta} below) and ``ORC'' and ``SEC'' stand for Ollivier Ricci and Ollivier sectional curvature lower bound, respectively, which are introduced in Definition \ref{def:OllRicsec}.}
    \label{tab:examples}
\end{table}
\FloatBarrier

The penultimate section (Section \ref{sec:examples}) in this paper presents various examples for which entropic curvature has opposite sign to Bakry-\'Emery curvature as well as Ollivier curvatures (both Ricci and sectional; see Definition \ref{def:OllRicsec}). Curvature properties of these examples are presented in Table \ref{tab:examples}.

Finally, the paper ends with some interesting questions for further investigations in Section \ref{sec:questions}.

\subsection{Previous work on discrete curvature}
Discrete Ricci curvature has been a vibrant subject of research in the recent years. On manifolds, there is only one Ricci curvature notion, however on graphs there is multitude of analogues, namely Ollivier Ricci curvature, Bakry Emery Ricci curvature and entropic Ricci curvature.
They all have in common that a lower Ricci curvature bound $K$ can be characterized via semigroup estimates of the form
\[
\|\nabla P_t f\| \leq e^{-Kt} \|\nabla f\|
\]
for suitable norms and gradients.
The reason that the discrete Ricci curvature notions are different (and not even comparable), is the difference in measuring the gradient, and the lack of a chain rule for the gradient and Laplacian.
An overview on discrete Ricci curvature is given in the book \cite{najman2017modern}.
For work on Bakry Emery Ricci curvature, see e.g. 
\cite{fathi2015curvature,schmuckenschlager1998curvature,
lin2010ricci,KKRT-16}.
For work on Ollivier Ricci curvature, see e.g.
\cite{ollivier2007ricci,ollivier2009ricci,
lin2011ricci,fathi2022discrete}.
In this article, we are particularly interested in entropic Ricci curvature. For previous work, see 
\cite{FM-16,erbar2017ricci,erbar2018poincare,Erbar2019EntropicCA,Mi-13,KLMP-23}.
In contrast to the entropic curvature by Erbar, Maas and Mielke, there is also a different notion of entropic curvature introduced by Samson and based on Schrödinger bridges for the $\ell_1$-Wasserstein distance, see 
\cite{Sam-22,rapaport2023criteria,pedrotti2023contractive}.
Recently, Ricci curvature has been proven to be a useful tool in machine learning, and in particular, in graph neural networks 
\cite{topping2021understanding,nguyen2023revisiting,ye2019curvature}.

\section{Setup and Notation}


In this section, we briefly recall some fundamental notions used in connection with finite Markov chains $(X,Q,\pi)$ and entropic curvature and present an adapted $\Gamma$ calculus which will be used throughout this paper. 

\subsection{Markov chains and means}
\label{subsec:markchandmeans}

All Markov chains $(X,Q,\pi)$ in this paper will be \emph{finite}, \emph{irreducible}, and \emph{reversible}, and we will refer to them simply as \emph{finite Markov chains} by tacitly assuming these extra conditions. The \emph{Markov kernel} $Q: X \times X \to [0,1]$ satisfies $\sum_{y \in X} Q(x,y) = 1$ and the \emph{steady state} $\pi: X \to [0,1]$ can be viewed as a normalized left eigenvector of the transition probability matrix defined by $Q$, that is
$$ \sum_{x \in X} \pi(x) = 1, \quad \pi(y) = \sum_{x \in X} \pi(x) Q(x,y). $$
Reversibility is expressed by the condition
$$ Q(x,y)\pi(x) = Q(y,x)\pi(y). $$
Moreover, $Q$ induces a graph structure on $X$: $x,y \in X$ are adjacent (denoted by $x \sim y$) precisely if $Q(x,y) > 0$ and we have a symmetric weight function $w: X \times X \to [0,\infty)$ via $w(x,y) = Q(x,y)\pi(x)$, which is non-zero on the edges.

Conversely, any undirected connected finite weighted graph $X = (V,E)$ with vertex set $V$ and edge set $E$, and with \emph{vertex  measure} $m: V \to (0,\infty)$ and symmetric \emph{edge weights} $w: V \times V \to [0,\infty)$ satisfying $w(x,y) = w(y,x) > 0$ if and only if $x \sim y$ and $m(x) \ge \sum_{y \sim x} w(x,y)$ gives rise to a finite, irreducible, reversible Markov chain $(X,Q,\pi)$ by setting $Q(x,y) = w(x,y)/m(x)$ for $x \neq y$, $Q(x,x) = 1- \sum_{y \sim x} Q(x,y)$ for the transition probability, and $\pi(x) = m(x) / (\sum_{y \in V} m(y))$ for the steady state.

Natural operators associated to this Markov chain $(X,Q,\pi)$ are the (standard) \emph{Laplacian}
$$ \Delta f(x) = \sum_{y \in X} Q(x,y)(f(y)-f(x)) $$
for any function $f \in X^{\mathbb{R}}$, and the \emph{gradient}
$$ \nabla f(x,y) := \begin{cases} f(y)-f(x), & \text{if $x \sim y$}, \\ 0, & \text{if $x \not\sim y$}. \end{cases} $$
Note that we have $\nabla(y,x) = - \nabla(x,y)$. Functions $V: X \times X \to \mathbb{R}$ with $V(y,x) = - V(x,y)$ 
and $V(x,y) = 0$ if $x\not\sim y$ are called \emph{vector fields} on $X$.

For curvature notions, in particular entropic curvature, we need the notion of a mean, as introduced in \cite{Ma-11,EM-12}. The relevant mean for entropic curvature is called logarithmic mean.

\begin{definition}
    A function $\theta: [0,\infty) \times [0,\infty) \to [0,\infty)$ is called a \emph{mean}, if the following properties are satisfied:
    \begin{itemize}
      \item[(i)] $\theta$ is continuous and its restriction to $(0,\infty) \times (0,\infty)$ is smooth;
      \item[(ii)] $\theta$ is symmetric, that is, $\theta(r,s) = \theta(s,r)$;
      \item[(iii)] $\theta$ is monotone, that is, $\theta(r,s) \ge \theta(t,s)$ for $r \ge t$;
      \item[(iv)] $\theta$ is homogeneous, that is, $\theta(\lambda r,\lambda s) = \lambda \theta(r,s)$ for $\lambda > 0$,
      \item[(v)] $\theta$ is normalized, that is, $\theta(1,1)=1$.
    \end{itemize}
    The \emph{logarithmic mean} is defined as
    $$ \theta_{\log}(r,s) =\int_0^1 r^{1-p}s^p dp, $$
    and has the additional property
    \begin{itemize}
      \item[(vi)] $\theta(0,s) =0$ for all $s \ge 0$.
    \end{itemize}
\end{definition}

In the case $r,s >0$, $r \neq s$, the logarithmic mean can be written, alternatively, as
$$ \theta_{log}(r,s) = \frac{r-s}{\log(r)-\log(s)}, $$
which justifies its name. Another natural mean is the \emph{arithmetic mean} $\theta_a(r,s) = \frac{r+s}{2}$, which does not have property (vi) and for which we have
$$ \min(r,s) \le \theta_{log}(r,s) \le \theta_a(r,s) \le \max(r,s). $$
The arithmetic mean is relevant in connection to Bakry-\'Emery curvature. 

Furthermore, we define 
\[
\partial_1 \theta(r,s) := \frac{\partial}{\partial r}(r,s) \quad \text{and} \quad \partial_2\theta(r,s) := \frac{\partial}{\partial s}(r,s).
\]

\subsection{Relevant properties of the logarithmic mean}
Throughout this subsection, we let $\theta=\theta_{log}$. We introduce the following functions $b_0, b:\mathbb{R}_+^3\to \mathbb{R}$ as
    \begin{align} 
        b_0(\alpha,\beta,\gamma) &:= \partial_1\theta(\beta,\gamma)\alpha+\partial_2\theta(\beta,\gamma)\beta, \label{defn: b0-function} \\
        b(\alpha,\beta,\gamma) &:= b_0(\alpha,\beta,\gamma) - \theta(\alpha,\beta) \nonumber\\ &\phantom{:}= \partial_1\theta(\beta,\gamma)\alpha+\partial_2\theta(\beta,\gamma)\beta-\theta(\alpha,\beta) \ge 0. \label{eq: b-function-defn}
    \end{align}
The function $b$ is a crucial ingredient for our analysis later in Section \ref{sec:Cayley}. The function $b_0$ will be relevant in Section \ref{sec:universal_bd_perturbed}. In addition to its nonnegativity, which can be seen from \cite[Lemma 2.2]{EM-12}, we present useful properties and estimates for the function $b$ in the following proposition.
(Note that in the case of the arithmetic mean $\theta=\theta_a$, the corresponding function $b$, defined in \eqref{eq: b-function-defn} would vanish identically.) 

\begin{proposition} \label{prop: b-function} The function $b:\mathbb{R}_+^3\to \mathbb{R}$ satisfies the following properties.
    \begin{enumerate}[(a)]
        \item The function $b$ is homogeneous: $b(\lambda\alpha,\lambda\beta,\lambda\gamma)=\lambda b(\alpha,\beta,\gamma)$ for all $\lambda>0$.
        \item $b(\alpha,\beta,\gamma) \ge 0$.
        \item  $b(\alpha,\beta,\gamma)\ge  b(\gamma,\beta,\alpha)$ if $\alpha\ge \gamma$.
        \item We have
            \[
            b(\alpha,\beta,\gamma)+b(\gamma,\beta,\alpha)\ge 0.08\beta\left(\log\left(\frac{\alpha\gamma}{\beta^2}\right)\right)^2.
            \]
        \item For $\frac{\alpha}{\beta},\frac{\gamma}{\beta}\in [\frac{1}{256},256]$, we have
            \[
            b(\alpha,\beta,\gamma)+b(\gamma,\beta,\alpha)\le 34 \beta\left(\log\left(\frac{\alpha\gamma}{\beta^2}\right)\right)^2.
            \]
        \item We have 
            \[\theta(\alpha,\beta)+\theta(\beta,\gamma)+\dfrac{b(\alpha,\beta,\gamma)\cdot b(\gamma,\beta,\alpha)}{b(\alpha,\beta,\gamma)+  b(\gamma,\beta,\alpha)} \ge \frac{1}{C}\beta,
            \] for some constant $C\le 0.93$. In particular, if $\alpha=\gamma$, we have \[2\theta(\alpha,\beta)+\frac{1}{2}b(\alpha,\beta,\alpha) \ge \frac{1}{C}\beta.\]
            \item $b(\alpha,\beta,\alpha)=b(\beta,\alpha,\beta)$, and
            \[ 4\theta(\alpha,\beta)+b(\alpha,\beta,\alpha)\ge 2(\alpha+\beta)\,.
            \]
    \end{enumerate}
\end{proposition}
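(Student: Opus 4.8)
The organising idea is to remove the $\beta$-dependence via the homogeneity in part~(a), and then to express everything through the single variable function
\[
\psi(t):=\partial_1\theta(1,t)=\frac{t-1-\log t}{(\log t)^2}\ge 0,\qquad t>0.
\]
Part~(a) is immediate: a $1$-homogeneous $\theta$ has $0$-homogeneous partial derivatives, so every term of \eqref{eq: b-function-defn} scales by $\lambda$. By symmetry $\partial_2\theta(1,t)=\partial_1\theta(t,1)=\psi(1/t)$, and Euler's identity for $\theta$ gives $\psi(t)+t\psi(1/t)=\theta(1,t)$. Writing $t=e^x$ one has $\psi(e^x)=(e^x-1-x)/x^2$ with Taylor series $\tfrac12+\tfrac x6+\tfrac{x^2}{24}+\cdots$, so $\psi(1)=\tfrac12$; differentiating, the monotonicity of $x\mapsto\psi(e^x)$ reduces to $xe^x$ having the sign of $x$, so $\psi$ is strictly increasing on $(0,\infty)$. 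Substituting into \eqref{eq: b-function-defn} we obtain the three identities used throughout:
\[
b(\alpha,1,\gamma)=\alpha\psi(\gamma)+\psi(1/\gamma)-\theta(\alpha,1),
\]
\[
b(\alpha,1,\gamma)+b(\gamma,1,\alpha)=(\alpha-1)\bigl(\psi(\gamma)-\psi(\tfrac1\alpha)\bigr)+(\gamma-1)\bigl(\psi(\alpha)-\psi(\tfrac1\gamma)\bigr),
\]
\[
b(\alpha,\beta,\alpha)=(\alpha-\beta)\bigl(\psi(\tfrac\alpha\beta)-\psi(\tfrac\beta\alpha)\bigr),
\]
the second coming from the first after replacing the $\theta$-terms by Euler (the symmetric parts cancel), and the third from the Taylor-type form $b(\alpha,\beta,\gamma)=\theta(\beta,\gamma)-\theta(\alpha,\beta)+\partial_1\theta(\beta,\gamma)(\alpha-\beta)+\partial_2\theta(\beta,\gamma)(\beta-\gamma)$ (itself a rewriting via Euler) at $\gamma=\alpha$. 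Part~(b) is then \cite[Lemma 2.2]{EM-12} (equivalently, concavity of $r\mapsto\theta(r,\beta)$ with a supporting line at the point fixed by $\gamma$), and the first identity in~(g) is immediate: $b(\alpha,\beta,\alpha)$ and $b(\beta,\alpha,\beta)$ consist of the same four terms once one uses $\partial_1\theta(r,s)=\partial_2\theta(s,r)$ and $\theta(r,s)=\theta(s,r)$.

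For part~(c), normalise $\beta=1$; a short computation with the first identity above (and Euler) gives
\[
b(\alpha,1,\gamma)-b(\gamma,1,\alpha)=(\alpha\gamma-1)\Bigl(\tfrac{\psi(\gamma)}{\gamma}-\tfrac{\psi(\alpha)}{\alpha}\Bigr)+g(\gamma)-g(\alpha),\qquad g(t):=\frac{t-1/t}{\log t},
\]
and one then checks nonnegativity for $\alpha\ge\gamma$ by an elementary case analysis: $t\mapsto\psi(t)/t$ is decreasing, $g$ is invariant under $t\mapsto1/t$ with a minimum at $t=1$, and separating the cases $\alpha\gamma\gtrless1$ and $\alpha\gtrless1$ determines all the signs and comparisons. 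The inequality in~(g) is one-variable: normalising $\beta=1$, using $b(\alpha,1,\alpha)=(\alpha-1)(\psi(\alpha)-\psi(1/\alpha))$ and putting $\alpha=e^L$, it becomes (after clearing $L^2$)
\[
F(L):=e^{2L}-e^{L}+e^{-L}-1+2Le^{L}-2L-2L^{2}e^{L}-2L^{2}\ge 0.
\]
The coefficient of $L^m$ in the Maclaurin series of $F$ is $\tfrac1{m!}\bigl(2^m+(-1)^m-1+4m-2m^2\bigr)$; it vanishes for $0\le m\le5$ and is positive for $m\ge6$ (as $2^m\ge2(m-1)^2$ there), so $F\ge0$ on $[0,\infty)$ at once, and for $L<0$ the rapid decay of the coefficients yields an elementary bound (the alternating-series estimate already handles $|L|\le2$, and $e^{-L}$ dominates for $|L|\ge2$).

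The technical core is parts~(d) and~(e). Passing to logarithmic coordinates $\alpha=e^{p}$, $\gamma=e^{q}$, the sum in the second identity becomes a real-analytic function $H(p,q)$ with $H\ge0$ by~(b), vanishing exactly on $\{p+q=0\}$; since a nonnegative differentiable function vanishing on a line also has vanishing gradient there, $H(p,q)=(p+q)^2R(p,q)$ with $R$ real-analytic. Thus~(d) reads $R\ge0.08$ everywhere and~(e) reads $R\le34$ on the box $[-\log256,\log256]^2$. On the line $\{p+q=0\}$ one has $R=\tfrac12\partial_v^2H$ in the transverse direction, which one computes from the closed form for $H$; at the origin it equals $\tfrac16$, and a similar estimate bounds it below on the whole line. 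Off the line, $R=H/(p+q)^2$ is estimated directly from the explicit formula for $H$ together with the monotonicity and growth of $\psi$; the exponential factors make $R$ large where $\alpha$ or $\gamma$ is large, which is also why $R$ is unbounded globally (as $\alpha\to\infty$ with $\gamma\ne1/\alpha$) and the box restriction is needed in~(e), where only a bound of $R$ on a compact set remains. I expect these quantitative transverse and off-line estimates, and calibrating the constants $0.08$, $34$, $256$, to be the main obstacle.

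For part~(f), normalise $\beta=1$ and use part~(c) to assume $\alpha\le\gamma$, so that $b(\gamma,1,\alpha)\le b(\alpha,1,\gamma)$ and hence $\tfrac{b(\alpha,1,\gamma)\,b(\gamma,1,\alpha)}{b(\alpha,1,\gamma)+b(\gamma,1,\alpha)}\ge\tfrac12 b(\gamma,1,\alpha)$; it then suffices to bound $\theta(\alpha,1)+\theta(1,\gamma)+\tfrac12 b(\gamma,1,\alpha)$ below by $1/C$. Split into regions: if $\gamma\ge1$ then $\theta(1,\gamma)\ge1$ and only the small extra amount $1/C-1$ is needed, supplied by $\theta(\alpha,1)$ when $\alpha$ is not small and by $\tfrac12 b(\gamma,1,\alpha)$ (whose leading term tends to $+\infty$ as $\alpha\to0$) when it is; if $\alpha$ or $\gamma$ is large the relevant $\theta$ is large; and on the remaining bounded region one uses explicit values of $\psi$ near the endpoints, the worst case producing the stated $C\le0.93$. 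The ``in particular'' case is $\alpha=\gamma$, where $b(\alpha,\beta,\gamma)=b(\gamma,\beta,\alpha)$, so the fraction equals $\tfrac12 b(\alpha,\beta,\alpha)$ and the statement overlaps with~(g). Apart from~(d)/(e), extracting the near-optimal constant in~(f) on that bounded region is the other place where some genuine computation is unavoidable.
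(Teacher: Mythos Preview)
Your organisation via $\psi(t)=\partial_1\theta(1,t)$ is a workable alternative for the easy parts (a), (b), the first identity in~(g), and even~(g) via your power-series argument. But you are missing the paper's central device: the integral representation $\theta(u,v)=\int_0^1 u^p v^{1-p}\,dp$. From it the paper derives
\[
b(\alpha,1,\gamma)=\int_0^1 \alpha^p\Bigl(p(\alpha\gamma)^{1-p}+(1-p)(\alpha\gamma)^{-p}-1\Bigr)\,dp,
\]
in which the bracket is symmetric in $\alpha,\gamma$ and nonnegative by Jensen. This single formula gives~(b) and~(c) in one line (your route to~(c) through a case analysis on the signs of $\alpha\gamma-1$ and $\alpha-1$ is far more laborious), and---crucially---it is what makes~(d) tractable: applying $\alpha^p+\gamma^p\ge 2(\sqrt{\alpha\gamma})^p$ inside the integral reduces the two-variable lower bound to the diagonal, i.e.\ to minimising the single-variable function $s\mapsto 2b(e^{s/2},1,e^{s/2})/s^2$. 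Your analyticity/factoring sketch $H(p,q)=(p+q)^2R(p,q)$ is correct in spirit but, as you yourself note, delivers no constant; the reduction-to-diagonal trick is the missing idea. For~(e) both the paper and you rely on numerics over a compact box.

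Your treatment of~(f) has a concrete error and is then too sketchy. With $\alpha\le\gamma$, part~(c) gives $b(\gamma,1,\alpha)\ge b(\alpha,1,\gamma)$, the reverse of what you wrote, so the crude harmonic-mean bound yields $\tfrac12 b(\alpha,1,\gamma)$, not $\tfrac12 b(\gamma,1,\alpha)$. More importantly, the paper does not use the $\tfrac12\min$ bound at all: it splits into the cases $S\ge 2T$ (giving $\tfrac{ST}{S+T}\ge\tfrac23 T$) and $S\le 2T$ (giving $\tfrac{ST}{S+T}\ge\tfrac13 S$), and in each case cancels one of the $\theta$-terms against part of the $b$-term to reduce to an explicit one-variable function whose minimum is computed to be just above~$1.08$. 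Your region-by-region discussion does not reach the quantitative threshold $1/C\ge 1.08$; the finer case split and the explicit one-variable reductions are what is needed.
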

\begin{proof}
Statement (a) is straightforward to check. The other statements can be proved without loss of generality under the assumption that $\beta=1$.
    
Recall $\theta(u,v)=\int_0^1 u^pv^{1-p}dp$.
Then
\begin{align} \label{eq:b-alpha-gamma}
b(\alpha,1,\gamma)
&=\partial_1\theta(1,\gamma)\alpha+\partial_2\theta(1,\gamma)-\theta(\alpha,1) \nonumber\\
&=\int_0^1 \Big( p\gamma^{1-p}\alpha+(1-p)\gamma^{-p}-\alpha^p \Big)dp \nonumber\\
&=\int_0^1 \alpha^p \Big( p(\alpha\gamma)^{1-p}+(1-p)(\alpha\gamma)^{-p}-1 \Big)dp,
\end{align} where the term in the large bracket is symmetric in $\alpha$ and $\gamma$, and it is nonnegative due to Jensen's inequality. Thus (b) and (c) follow immediately. Furthermore, the fact that $\alpha^p+\gamma^p\ge 2(\sqrt{\alpha\gamma})^p$ implies 
\[
b(\alpha,1,\gamma) + b(\gamma,1,\alpha) \ge 2b(\sqrt{\alpha\gamma},1,\sqrt{\alpha\gamma}).
\] To minimize the above term on the right hand side, we let $\alpha\gamma=e^s$, $s\in \mathbb{R}$ and define
\[
f(s):=\frac{2b(e^{s/2},1,e^{s/2})}{s^2}=
16s^{-4}e^{s/4}\sinh\left(\frac{s}{4}\right)\left(2\sinh\left(\frac{s}{2}\right)-s\right)\,.
\] 
WolframAlpha suggests that $f$ is convex and
$\min f=f(-5.8495)=0.0823$. Thus (d) is proved.

For statement (e), we consider the function
$$ g(s,t) := \frac{b(e^s,1,e^t)+b(e^t,1,e^s)}{(s+t)^2} $$
over the domain $(s,t) \in [-8\log(2),8\log(2)]\times[-8\log(2),8\log(2)]$. Figure \ref{fig:prop2.2e} shows that the maximum of $g(s,t)$ assumed at $(s,t)=(8\log(2),8\log(2))$ with $8\log(2) \approx 5.545$. Moreover, we have
$$ g(s,s) = \frac{e^{2s}-(2s+1)e^s+2s-1+e^{-s}}{2s^4} $$
and $g(8\log(2),8\log(2))$ is approximately $33.026$. This finishes (e).
\begin{figure}[h!]
\begin{center}
\includegraphics[width=0.3\textwidth]{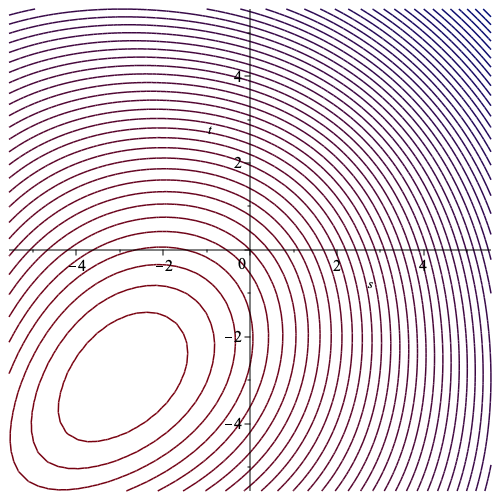}
\includegraphics[width=0.38\textwidth]{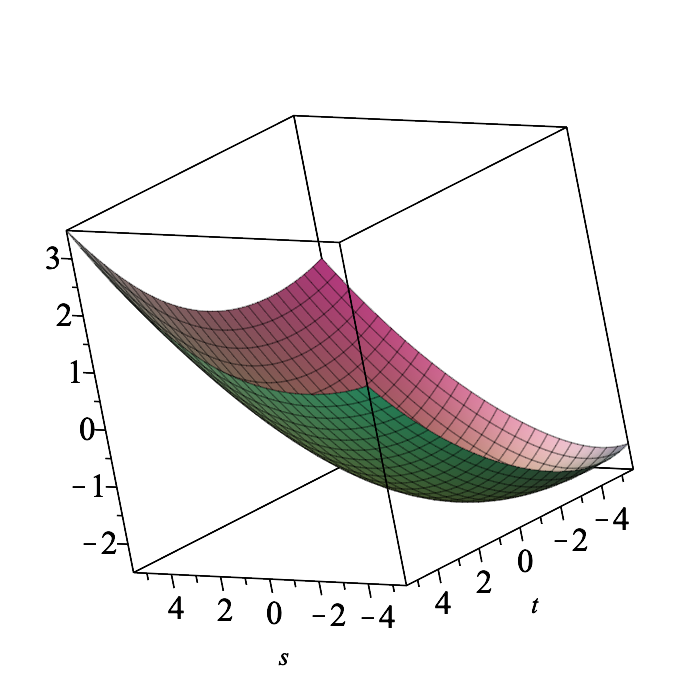}
\includegraphics[width=0.3\textwidth]{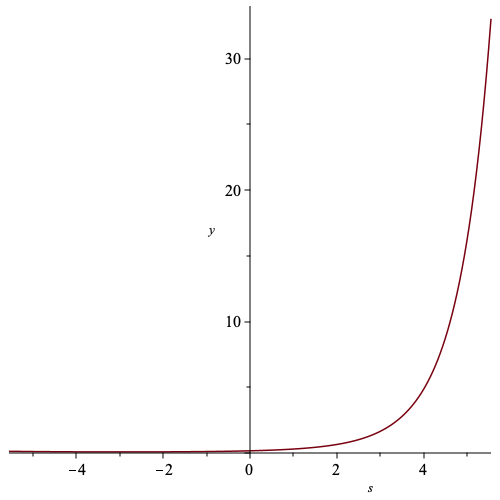}
\end{center}
\caption{Contour plot and plot of the function $\log(g(s,t))$ over the domain $[-8\log(2),8\log(2)]\times[-8\log(2),8\log(2)]$, respectively, on the left. The values of the contours increase the closer they are to the point $(s,t)=(8\log(2),8\log(2))$.
The graph of $g(s,s)$ over $[-8\log(2),8\log(2)]$ is plotted on the right.}
\label{fig:prop2.2e}
\end{figure}
\FloatBarrier

Now for statement (f), we need to prove that
\[
\theta(\alpha,1)+\theta(\gamma,1)+\frac{b(\alpha,1,\gamma)\cdot b(\gamma,1,\alpha)}{b(\alpha,1,\gamma) + b(\gamma,1,\alpha)}\ge 1.08.
\]
Denote $S:=b(\alpha,1,\gamma)$ and $T:=b(\gamma,1,\alpha)$. Assume without loss of generality that $\alpha\ge \gamma$. We learn from $(c)$ that $S\ge T$. We consider two separated cases: $S/T \ge 2$ and $S/T\le 2$.

\medskip

{\bf Case $S\ge 2T$}: we have $\frac{ST}{S+T} \ge \frac{2}{3}T$. Then
\begin{align*}
\theta(\alpha,1)+\theta(\gamma,1)+\frac{ST}{S+T} 
&\ge \theta(\alpha,1)+\theta(\gamma,1)+\frac{2}{3}b(\gamma,1,\alpha) \\
&= \theta(\alpha,1)+\frac{1}{3}\theta(\gamma,1)+\frac{2}{3}\partial_1\theta(1,\alpha)\gamma+\frac{2}{3}\partial_2\theta(1,\alpha)\\
&\ge \theta(\alpha,1)+\frac{2}{3}\partial_2\theta(1,\alpha).
\end{align*}

\medskip

{\bf Case $S \le 2T$}: we have $\frac{ST}{S+T} \ge \frac{1}{3}S$. Then
\begin{align*}
\theta(\alpha,1)+\theta(\gamma,1)+\frac{ST}{S+T} 
&\ge \theta(\alpha,1)+\theta(\gamma,1)+\frac{1}{3}b(\alpha,1,\gamma) \\
&= \frac{2}{3}\theta(\alpha,1)+\theta(\gamma,1)+\frac{1}{3}\partial_1\theta(1,\gamma)\alpha+\frac{1}{3}\partial_2\theta(1,\gamma)\\
&\ge \frac{5}{3}\theta(\gamma,1)+\frac{1}{3}\partial_1\theta(1,\gamma)\gamma+\frac{1}{3}\partial_2\theta(1,\gamma).
\end{align*}

Define 
\begin{align*}
    g(s)&:=\theta(e^s,1)+\frac{2}{3}\partial_2\theta(1,e^s)=\frac{e^s-1}{s}+\frac{2}{3}\frac{e^{-s}+s-1}{s^2}\\
    h(s)&:=\frac{5}{3}\theta(e^s,1)+\frac{1}{3}\partial_1\theta(1,e^s)e^s+\frac{1}{3}\partial_2\theta(1,e^s)\\&\phantom{;}=\frac{5}{3}\frac{e^s-1}{s}+\frac{1}{3}\frac{e^{s}-s-1}{s^2}e^s+\frac{1}{3}\frac{e^{-s}+s-1}{s^2}
\end{align*}

WolframAlpha suggests that $g$ and $h$ are both convex, and 
$$\min g=g(-1.29549)=1.09974 \quad \text{and} \min h=h(-2.38889)=1.08041.$$ 
We can then conclude from both cases that 
\begin{equation*}
\theta(\alpha,1)+\theta(\gamma,1)+\frac{b(\alpha,1,\gamma)\cdot b(\gamma,1,\alpha)}{b(\alpha,1,\gamma) + b(\gamma,1,\alpha)}\ge 1.08.
\end{equation*}
The inequality (g) follows from 
\begin{multline*} 
4 \theta(e^s,1)+b(e^s,1,e^s) - 2(e^s+1) \\ = \frac{e^{2s}-(2s^2-2s+1)e^s-(2s^2+2s+1)+e^{-s}}{s^2} \ge 0. 
\end{multline*}
\end{proof}

The following inequalities regarding the function $b_0$ are important in Section \ref{sec:universal_bd_perturbed}.

\begin{lemma} \label{lem:ineqforgenlowbd}
  We have the following inequalities for $\alpha, \beta, \gamma > 0$:
  \begin{align}
      b_0(\gamma,\beta,\alpha) \theta(\gamma,\beta) + b_0(\alpha,\beta,\gamma) \theta(\alpha,\beta) &\ge (\theta(\alpha,\beta))^2 + (\theta(\gamma,\beta))^2, \label{eq:b0est1} \\
      b_0(\gamma,\beta,\alpha) \theta(\gamma,\beta) + b_0(\alpha,\beta,\gamma) \theta(\alpha,\beta) &\ge \frac{\beta^2}{2}.
      \label{eq:b0est2}
  \end{align}
\end{lemma}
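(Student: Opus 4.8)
Recall that $b_0(\alpha,\beta,\gamma) = \partial_1\theta(\beta,\gamma)\alpha + \partial_2\theta(\beta,\gamma)\beta$ and, for $\theta = \theta_{\log}$, that $\theta(u,v) = \int_0^1 u^p v^{1-p}\,dp$. The plan is to reduce both inequalities to the homogeneous case $\beta = 1$ (using homogeneity of $\theta$ and $b_0$, which is immediate from their definitions) and then to write everything as a single integral over $p \in [0,1]$ with a pointwise-nonnegative integrand. First I would compute, by differentiating under the integral sign, that $\partial_1\theta(1,\gamma) = \int_0^1 p\,\gamma^{1-p}\,dp$ and $\partial_2\theta(1,\gamma) = \int_0^1 (1-p)\,\gamma^{-p}\,dp$, so that
\[
b_0(\alpha,1,\gamma) = \int_0^1 \bigl( p\,\gamma^{1-p}\alpha + (1-p)\gamma^{-p} \bigr)\,dp = \int_0^1 \gamma^{-p}\bigl( p\,\gamma\,\alpha + 1 - p \bigr)\,dp,
\]
and similarly for $b_0(\gamma,1,\alpha)$ by swapping the roles of $\alpha$ and $\gamma$.

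For \eqref{eq:b0est1}, the key observation is that $\theta(\alpha,1) = \int_0^1 \alpha^p\,dp$ and $\theta(\gamma,1) = \int_0^1 \gamma^p\,dp$, and $(\theta(\alpha,1))^2 = \int_0^1\int_0^1 \alpha^{p+q}\,dp\,dq$. I would expand the left-hand side $b_0(\gamma,1,\alpha)\theta(\gamma,1) + b_0(\alpha,1,\gamma)\theta(\alpha,1)$ as a double integral over $(p,q) \in [0,1]^2$ and the right-hand side as the double integral of $\alpha^{p+q} + \gamma^{p+q}$, then symmetrize the integrand in $(p,q)$ and show the resulting integrand is pointwise $\geq 0$. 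Concretely, after collecting terms the claim should reduce to a pointwise inequality of the shape $p\,\alpha^{?}\gamma^{?} + (1-p)\alpha^{?}\gamma^{?} + (\text{sym}) \geq \alpha^{p+q} + \gamma^{p+q}$, which I expect to follow from weighted AM-GM (Young's inequality) applied to the pairs of exponents — the linear interpolation weights $p$ and $1-p$ landing the geometric mean of two monomials above the relevant power. This is the step most likely to require care: getting the bookkeeping of exponents right so that the AM-GM split is clean, and checking that the symmetrization genuinely kills the cross terms.

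For \eqref{eq:b0est2}, I would first observe that it follows from \eqref{eq:b0est1} together with the elementary bound $(\theta(\alpha,\beta))^2 + (\theta(\gamma,\beta))^2 \geq 2\,\theta(\alpha,\beta)\theta(\gamma,\beta) \geq \ldots$; more directly, since $\theta_{\log}(u,v) \geq \min(u,v)$ is too weak near $\beta$, I would instead use that the map $u \mapsto \theta(u,1)$ is convex with $\theta(1,1) = 1$, or simply bound $(\theta(\alpha,1))^2 + (\theta(\gamma,1))^2 \geq \tfrac12(\theta(\alpha,1) + \theta(\gamma,1))^2$ and note $\theta(\alpha,1) + \theta(\gamma,1) \geq 2\sqrt{\theta(\alpha,1)\theta(\gamma,1)} \geq 2\theta(\sqrt{\alpha\gamma},1) \geq 2$ — wait, this last step needs $\theta(\sqrt{\alpha\gamma},1)\geq 1$ which is false for $\alpha\gamma < 1$. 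So the honest route is: by \eqref{eq:b0est1} the left side is at least $(\theta(\alpha,1))^2 + (\theta(\gamma,1))^2$, and I claim $(\theta(\alpha,1))^2 + (\theta(\gamma,1))^2 \geq \tfrac12$. This follows because $\theta(\alpha,1) = \int_0^1\alpha^p\,dp \geq \alpha^{1/2}$ by Jensen (convexity of $p\mapsto\alpha^p$ would give the wrong direction; rather by the AM-GM/Jensen for the \emph{exponential}, $\int_0^1 \alpha^p dp \geq \exp\int_0^1 p\log\alpha\,dp = \alpha^{1/2}$), hence $(\theta(\alpha,1))^2 + (\theta(\gamma,1))^2 \geq \alpha + \gamma \geq 2\sqrt{\alpha\gamma}$, which is still not bounded below by a constant.

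Therefore \eqref{eq:b0est2} must be proved from $b_0$ directly rather than deduced from \eqref{eq:b0est1}. I would instead note $b_0(\alpha,1,\gamma)\,\theta(\alpha,1) \geq \theta(\gamma,1)\cdot\theta(\alpha,1)$ is false in general too, so the cleanest approach is: bound the left-hand side below using only the ``$(1-p)$'' part of $b_0$, namely $b_0(\alpha,1,\gamma) \geq \partial_2\theta(1,\gamma) = \int_0^1(1-p)\gamma^{-p}\,dp$ and $b_0(\gamma,1,\alpha)\geq \int_0^1(1-p)\alpha^{-p}\,dp$, but this discards the $\alpha$-dependence badly. Given the subtlety, I expect the actual argument symmetrizes the full double integral for $b_0(\gamma,1,\alpha)\theta(\gamma,1) + b_0(\alpha,1,\gamma)\theta(\alpha,1)$ and shows the integrand is pointwise $\geq \tfrac12$ after symmetrization in the two integration variables and in $(\alpha,\gamma)$; the constant $\tfrac12$ should emerge from the ``$(1-p)(1-q)$'' corner where all powers of $\alpha,\gamma$ are driven to their most favorable values, combined with a convexity/AM-GM estimate that neutralizes the remaining terms. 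The main obstacle throughout is organizing the four-fold symmetrization (two integration variables $p,q$ and the swap $\alpha\leftrightarrow\gamma$) so that the surviving integrand admits a transparent pointwise lower bound; once the right grouping is found, each piece should succumb to a one-line weighted AM-GM.
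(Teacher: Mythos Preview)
Your plan for \eqref{eq:b0est1} is workable but massively overcomplicated. You are missing the one-line observation that makes this trivial: by definition $b_0(\alpha,\beta,\gamma) - \theta(\alpha,\beta) = b(\alpha,\beta,\gamma) \ge 0$ (this is exactly the nonnegativity in \eqref{eq: b-function-defn}, already established via Jensen). Hence $b_0(\alpha,\beta,\gamma) \ge \theta(\alpha,\beta)$ and $b_0(\gamma,\beta,\alpha) \ge \theta(\gamma,\beta)$, and multiplying each by the matching $\theta$-factor gives \eqref{eq:b0est1} immediately. No double integral, no symmetrization, no AM-GM.

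For \eqref{eq:b0est2} you do not have a proof. You correctly discover that \eqref{eq:b0est1} cannot imply \eqref{eq:b0est2} (since $(\theta(\alpha,1))^2 + (\theta(\gamma,1))^2$ has no positive uniform lower bound as $\alpha,\gamma \to 0$), and then your proposal dissolves into a hope that a four-way symmetrization of a double integral will produce a pointwise bound of $\tfrac12$. It will not: the integrand is not pointwise bounded below by any positive constant (take $\alpha = \gamma$ small; every monomial $\alpha^r\gamma^s$ appearing goes to $0$ except the bare ``$(1-p)(1-q)$'' terms, which integrate to well under $\tfrac12$ on their own). The inequality genuinely uses cancellation between terms, not a pointwise integrand bound. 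The paper's route is quite different: set $\alpha = e^s$, $\beta = 1$, $\gamma = e^t$, compute both sides in closed form in terms of the single auxiliary function $k(t) = -\tfrac{1}{t} + \tfrac{e^t - 1}{t^2}$ (so that $\theta(e^t,1) = tk(t)+1$ and $\partial_1\theta(1,e^t) = k(t)$, $\partial_2\theta(1,e^t) = k(-t)$), and then do a case analysis on the signs of $s$ and $t$, using only the elementary monotonicity and positivity properties of $k$ and of $tk(t)+1 = (e^t-1)/t$. The hardest case ($s,t \le 0$) reduces to a one-variable inequality that is dispatched by Taylor expansion. This is not conceptually deep, but it is a concrete calculation that your symmetrization plan does not replace.
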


\begin{proof}
  Inequality \eqref{eq:b0est1} follows directly from the non-negativity of $b$.

For the proof of \eqref{eq:b0est2},
we can assume $\alpha = e^s$, $\beta=1$, and $\gamma = e^t$, by homogeneity of the inequality. Then the inequality translates into
\begin{multline*}
\left(\left(-\frac{1}{s}+\frac{e^s-1}{s^2}\right)e^t+\left(\frac{1}{s}-\frac{1-e^{-s}}{s^2}\right)\right)\frac{e^t-1}{t} + \\ \left(\left(-\frac{1}{t}+\frac{e^t-1}{t^2}\right)e^s+\left(\frac{1}{t}-\frac{1-e^{-t}}{t^2}\right)\right)\frac{e^s-1}{s} \ge \frac{1}{2}.
\end{multline*}
Introducing the function 
$$ k(t):=-\frac{1}{t} + \frac{e^t-1}{t^2}, $$
the inequality can be reformulated as follows:
\begin{equation} \label{eq:kineq} 
(k(s)e^t+k(-s))(tk(t)+1) + (k(t)e^s+k(-t))(sk(s)+1) \ge \frac{1}{2}. 
\end{equation}
It is easy to see that $k(t) \ge 0$ for all $t \in \mathbb{R}$, $k(0) = 1/2$, that both $k$ and $tk(t)+1=\frac{e^t-1}{t} $ are monotone increasing, and that
$$ t k(t) + 1 \ge 0 \quad \text{for all $t \in \mathbb{R}$}. $$ 

\begin{itemize}
\item Assume first that precisely one of $s,t$ is non-positive. Let us assume, without loss of generality, that we have $s \le 0$ and $t > 0$. Then we have
\begin{multline*}
(k(s)e^t+k(-s))(tk(t)+1) + (k(t)e^s+k(-t))(sk(s)+1) \\ \ge k(-s)(tk(t)+1) \ge k(-s) \ge k(0) = 1/2. 
\end{multline*}
This shows the inequality as soon as precisely one of $s,t$ is non-positive. 
\item Assume now that both $s,t$ are positive. Then we have
\begin{multline*}
(k(s)e^t+k(-s))(tk(t)+1) + (k(t)e^s+k(-t))(sk(s)+1) \\ \ge k(s)e^t(tk(t)+1) \ge k(s) \ge k(0) = 1/2. 
\end{multline*}
\item Finally, assume that $s,t \le 0$. Without loss of generality, we can assume $s \le t \le 0$. Then we have, by monotonicity of $tk(t)+1$ and $k(-t) \ge 1/2$,
\begin{multline*}
(k(s)e^t+k(-s))(tk(t)+1) + (k(t)e^s+k(-t))(sk(s)+1) \\ \ge k(-s)(tk(t)+1) + k(-t)(sk(s)+1) \\
\ge k(-s)(sk(s)+1) + \frac{1}{2}(sk(s)+1) = \left(\frac{1}{2} + k(-s)\right)(sk(s)+1) \\
\left( \frac{1}{2} + \frac{1}{s} + \frac{e^{-s}-1}{s^2} \right) \frac{e^s-1}{s}.
\end{multline*}
By replacing $s$ by $-s$, it suffices to show for $s\ge 0$ that
$$ g(s) := \frac{1-e^{-s}}{s} \left( \frac{e^s-1}{s^2} + \frac{1}{2} - \frac{1}{s} \right) \ge \frac{1}{2}. $$
In the case $s \ge 2$, we have, by Taylor expansion
$$ g(s) \ge \frac{(1-e^{-s})(e^s-1)}{s^3} = 2 \frac{\cosh(s)-1}{s^3} \ge \frac{2}{s^3}\left( \frac{s^2}{2} + \frac{s^4}{24}   \right) = \frac{1}{s} + \frac{s}{12} \ge \frac{1}{2}. $$
We have also, by Taylor expansion, for $s \ge 0$
$$ g(s) \ge \frac{1-e^{-s}}{s^3} \left( s^2 + \frac{s^3}{6} \right), $$
and it suffices to show for $0 \le s \le 2$ that
$$ \frac{1-e^{-s}}{s^3} \left( s^2 + \frac{s^3}{6} \right) \ge \frac{1}{2}. $$
This is equivalent to
$$ e^s\left( 1- \frac{s}{3} \right) \ge 1 + \frac{s}{6}, $$
which holds if we have
$$ \left( 1 + s + \frac{s^2}{2} \right) \left( 1- \frac{s}{3} \right) \ge 1 + \frac{s}{6} $$
for $0 \le s \le 2$. This, in turn, is equivalent to
$$ \frac{1}{2} \ge \frac{s(s-1)}{6} $$
for $0 \le s \le 2$, which is obviously true.
\end{itemize}
These case investigations show that inequality \eqref{eq:kineq} holds for all $s,t \in \mathbb{R}$, which finishes the proof of inequality \eqref{eq:b0est2}.
\end{proof}

\begin{remark}
The following illustration indicates that   
\eqref{eq:b0est2} holds even with a better lower bound $C$ slightly below $1$ instead of $1/2$. Since both sides of \eqref{eq:b0est2} are homogeneous of degree $2$, it suffices for its proof to show, for all $s,t \in \mathbb{R}$,
  $$ b_0(e^t,1,e^s) \theta(e^t,1) + b_0(e^s,1,e^t) \theta(e^s,1) \ge C. $$
  Figure \ref{fig:pic-lemma4.4} illustrates that this latter inequality.
\begin{figure}[h!]
\begin{center}
\includegraphics[width=0.38\textwidth]{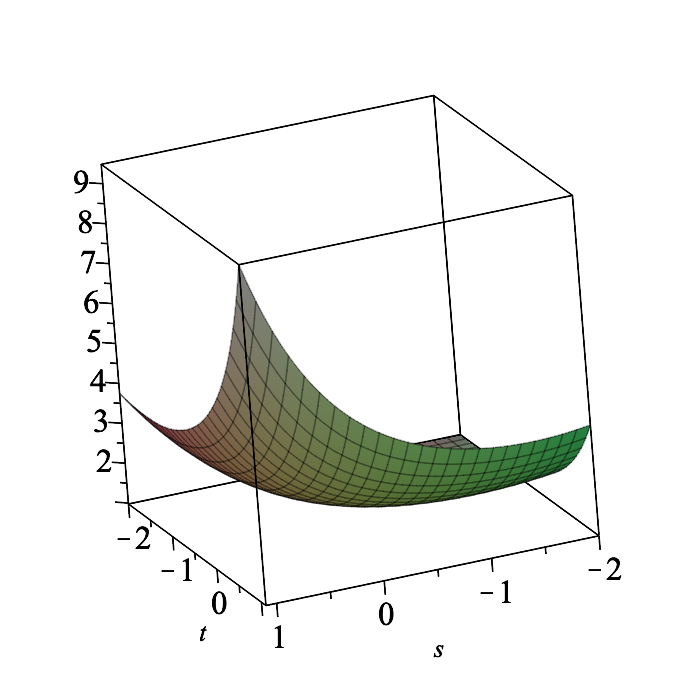}
\includegraphics[width=0.3\textwidth]{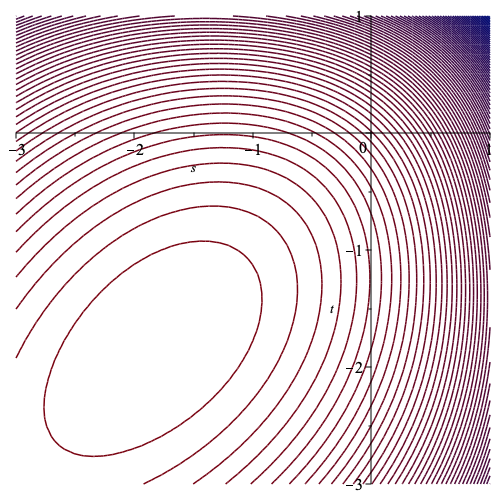}
\includegraphics[width=0.3\textwidth]{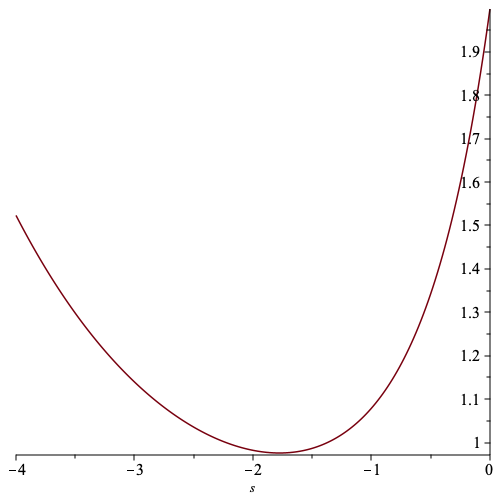}
\end{center}
\caption{Plot and contour plot of $b_0(e^t,1,e^s)\theta(e^t,1)+b_0(e^s,1,e^t)\theta(e^s,1)$ on the domains $[-2,1.1]\times[-2,1.1]$ and $[-3,1]\times[-3,1]$, respectively, on the left, and graph of the restriction of this function to the diagonal $s=t$ on $[-4,0]$ on the right.}
\label{fig:pic-lemma4.4}
\end{figure}
\FloatBarrier
\end{remark}

\subsection{Entropic curvature}

In analogy to the Ricci curvature notion by Sturm \cite{St-06-i,St-06-ii} and Lott-Villani \cite{LV-09} for metric measure spaces, entropic curvature is defined via a convexity property along geodesics with respect to suitably modified Wasserstein metric, adapted to the discrete setting of a finite Markov chain. For more details, we refer readers to the original papers \cite{Ma-11,EM-12}. In this paper, we focus on an equivalent infinitesimal curvature formulation of entropic curvature (see Definition \ref{def:entcurvclassical} below), which is an integrated Bochner-type inequality and involves two operators, denoted by $\mathcal{A}_\rho$ and $\mathcal{B}_\rho$ for all $\rho \in \mathcal{P}_*(X)$, where
$$ \mathcal{P}(X) = \{ \rho \in [0,\infty)^X: \sum_{x \in X} \rho(x) \pi(x) = 1 \}, $$
and $\mathcal{P}_*(X)$ is the subset of strictly positive probability densities $\rho \in (0,\infty)^X$. These operators are defined as follows for functions $f,g \in \mathbb{R}^X$ and arbitrary but fixed mean $\theta$:
\begin{align}
    (\widehat \Delta \rho)(x,y) &:= \partial_1 \theta(\rho(x),\rho(y))(\Delta \rho)(x) + \partial_2 \theta(\rho(x),\rho(y))(\Delta \rho)(y), \nonumber \\  
    \mathcal{A}_\rho(f,g) &:= \langle \nabla f, \nabla g \rangle_\rho, \label{eq:Arhofg}\\
    \mathcal{A}_\rho(f) &:= \mathcal{A}_\rho(f,f) = \Vert \nabla f \Vert_\rho^2, \nonumber \\
    \mathcal{B}_\rho(f) &:= \frac{1}{2} \langle \widehat \Delta \rho \cdot \nabla f, \nabla f \rangle_\pi - \langle \hat \rho \cdot \nabla f, \nabla(\Delta f) \rangle_\pi \nonumber
\end{align}
with 
$$ \partial_1 \theta(r,s) := \frac{\partial}{\partial r}(\theta(r,s) \quad \text{and} \quad \partial_2\theta(r,s) := \frac{\partial}{\partial s}\theta(r,s), $$
and where we used the following inner products
\begin{align*}
   \langle V_1,V_2 \rangle_\pi &:= \frac{1}{2} \sum_{x,y \in X} V_1(x,y)V_2(x,y)Q(x,y)\pi(x), \\
   \langle V_1, V_2 \rangle_\rho &:= \langle \hat \rho V_1, V_2 \rangle_\pi = \frac{1}{2} \sum_{x,y \in X} \widehat \rho(x,y) V_1(x,y) V_2(x,y) Q(x,y) \pi(x) 
\end{align*}
for vector fields $V_1, V_2: X \times X \to \mathbb{R}$ and
$$ \widehat \rho(x,y) = \widehat \rho_{xy} = \theta(\rho(x),\rho(y)). $$
Moreover, we have for any mean (see \cite[Lemma 2.2]{EM-12}):
\begin{equation} \label{eq:rhoasderiv}
   \widehat \rho_{xy} := \widehat \rho(x,y) = \theta(\rho(x),\rho(y)) = \partial_1 \theta(\rho(x),\rho(y)) \rho(x) + \partial_2 \theta(\rho(x),\rho(y)) \rho(y). 
\end{equation}

We use the following infinitesimal definition of entropic curvature (combining \cite[Definition 1.1]{EM-12} and \cite[Theorem 4.5]{EM-12}):

\begin{definition} \label{def:entcurvclassical}
Let $(X,Q,\pi)$ be an irreducible, reversible finite Markov chain and $K \in \mathbb{R}$. We say that $(X,Q,\pi)$ has (non-local) \emph{entropic curvature bounded below by $K$} if we have, for all $\rho \in \mathcal{P}_*(X)$ and all $f \in {\mathbb{R}}^X$,
$$ \mathcal{B}_\rho(f) \ge K \mathcal{A}_\rho(f), $$
where the operators $\mathcal{A}_\rho, \mathcal{B}_\rho$ are defined with respect to the logarithmic mean $\theta = \theta_{log}$. Moreover, we define the \emph{entropic curvature} of $(X,Q,\pi)$ as follows:
$$ K_{ent}(X,Q,\pi) = \inf_{\substack{\rho, f\\ \mathcal{A}_\rho(f) \neq 0}} \frac{\mathcal{B}_\rho(f)}{\mathcal{A}_\rho(f)}, $$
where $\rho$ is chosen in $\mathcal{P}_*(X)$.
\end{definition}

Of course, analogous non-local curvature notions can be also introduced for any other mean $\theta$. We will refer to these curvature notions henceforth as $\theta$-curvature.

\subsection{Adapted $\Gamma$ calculus and $\theta$-curvatures}
\label{subsec:AdGamma}

Let $(X,Q,\pi)$ be a finite Markov chain and $\theta: [0,\infty) \times [0,\infty) \to [0,\infty)$ be a mean.
In analogy to the Bakry-\'Emery $\Gamma$ calculus (see \cite{BE-85}), we introduce for every $\rho \in (0,\infty)^X$ the following $\rho$-Laplacian and modified bilinear operators $\Gamma_\rho, \Gamma_{2,\rho}$ (see \cite[Definition 3.1]{KLMP-23}):
\begin{align}
    \Delta_\rho f(x) &:= \sum_{y \in X} 2 \partial_1\theta(\rho_x,\rho_y)(f(y)-f(x))Q(x,y), \nonumber \\
    \Gamma_\rho(f_1,f_2)(x) &:= \frac{1}{2} \left( \Delta_\rho(f_1f_2)(x) - (f_1\Delta_\rho f_2)(x) - (f_2\Delta_\rho f_1)(x) \right) \label{eq:Gammarho} \\
    &= \sum_{y \in X} \partial_1\theta(\rho_x,\rho_y)(f_1(y)-f_1(x))(f_2(y)-f_2(x))Q(x,y), \nonumber \\
    \Gamma_{2,\rho}(f_1,f_2)(x)  &:= \frac{1}{2} \left( \Delta \Gamma_\rho(f_1,f_2)(x) - \Gamma_\rho(f_1,\Delta f_2)(x)-\Gamma_\rho(f_2,\Delta f_1)(x) \right) \label{eq:Gammarho2} 
\end{align}
for all $f,f_1,f_2 \in \mathbb{R}^X$. Moreover, we set 
$$ \Gamma_\rho f := \Gamma_\rho(f,f) \quad \text{and} \quad \Gamma_{2,\rho}(f) := \Gamma_{2,\rho}(f,f). $$

Note that the definition of $\Gamma_\rho$ involves the $\rho$-Laplacian, while $\Gamma_{2,\rho}$ involves the standard Laplacian. We like to emphasize that the operators $\Gamma_\rho$ and $\Gamma_{2,\rho}$ reduce to the classical $\Gamma$ and $\Gamma_2$ operators, involving only the standard Laplacian, if we choose $\theta = \theta_a$, that is, the arithmetic mean, since in this case we have $2 \partial_1\theta(r,s) = 1$ for all $r,s \ge 0$. It was shown in \cite[Section 3.1]{KLMP-23} that we have
\begin{align}
  \mathcal{A}_\rho(f) &= \langle \rho, \Gamma_\rho f \rangle_\pi, \label{eq:ArhoGamma}\\
  \mathcal{B}_\rho(f) &= \langle \rho, \Gamma_{2,\rho} f \rangle_\pi, \nonumber
\end{align}
with the following inner product 
$$ \langle f_1, f_2 \rangle_\pi := \sum_{x \in X} f_1(x) f_2(x) \pi(x) $$
for functions $f_1,f_2 \in \mathbb{R}^X$. By polarization, we obtain
$$ \mathcal{A}_\rho(f_1,f_2) = \langle \rho, \Gamma_\rho(f_1,f_2) \rangle_\pi $$
and 
\begin{equation} \label{eq:BrhoArhoLaprel} 
\mathcal{B}_\rho(f) = \langle \rho, \Gamma_{2,\rho} f \rangle_\pi =\frac{1}{2} \langle \Delta \rho, \Gamma_\rho f \rangle_\pi - \mathcal{A}_{\rho}(f,\Delta f), 
\end{equation}
using the fact that $\Delta$ is a symmetric operator with respect to $\langle \cdot,\cdot \rangle_\pi$.
Classical Bakry-\'Emery curvature at a vertex $x \in X$ is defined by the following local Bochner-type curvature-dimension inequality.

\begin{definition}[see {\cite[Definition 2.4]{KLMP-23}}] \label{def:BEcurv}
  Let $(X,Q,\pi)$ be n irreducible, reversible finite Markov chain and $N \in (0,\infty]$ be a dimension parameter. The \emph{Bakry-\'Emery curvature} $K_N(x)$ at a vertex $x \in X$ is the supremum of all $K \in \mathbb{R}$ satisfying
  $$ \Gamma_2 f(x) \ge \frac{1}{N} (\Delta f(x))^2 + K \Gamma f(x) \quad \text{for all $f \in \mathbb{R}^X$,}  $$
  where $\Gamma$ and $\Gamma_2$ are the bilinear operators in \eqref{eq:Gammarho} and \eqref{eq:Gammarho2}, respectively, in the special case $\theta = \theta_a$. We also say that $(X,Q,\pi)$ \emph{satisfies the (global) Bakry-\'Emery curvature-dimension inequality $CD_a(K,N)$} (where the index $a$ indicates the involvement of the mean $\theta = \theta_a$), if we have $K_N(x) \ge K$ for all $x \in X$.
\end{definition}

In view of this, it makes sense to describe nonlocal $\theta$-curvature for an arbitrary mean $\theta$ with an additional dimension parameter $n$ as follows.

\begin{definition} \label{defn: CDtheta}
    Let $(X,Q,\pi)$ be an irreducible, reversible finite Markov chain and $\theta: [0,\infty) \times [0,\infty) \to [0,\infty)$ a mean. We say that $(X,Q,\pi)$ has (non-local) \emph{$\theta$-curvature bounded below by $K \in \mathbb{R}$} for dimension $N \in (0,\infty]$ if we have, for all $\rho \in \mathcal{P}_*(X)$ and all $f \in \mathbb{R}^X$,
    \begin{equation} \label{eq:intCDcond} 
    \langle \rho, \Gamma_{2,\rho} f \rangle_\pi \ge \frac{1}{N} \langle \rho, (\Delta f)^2 \rangle_\pi + K \langle \rho, \Gamma_\rho f \rangle_\pi. 
    \end{equation}
    In this case, we also say that $(X,Q,\pi)$ \emph{satisfies $CD_\theta(K,N)$}. In the case $\theta = \theta_{log}$, we also write simply $CD_{ent}(K,N)$ for $CD_{\theta_{log}}(K,N)$. Moreover, we define the \emph{$\theta$-curvature} of $(X,Q,\pi)$ (for dimension $N = \infty$) as follows:
    \begin{equation} \label{eq:Ktheta}
    K_\theta(X,Q,\pi) = 
    \inf_{\substack{\rho, f\\ \langle \rho, \Gamma_\rho f \rangle_\pi \neq 0}} \frac{\langle \rho,\Gamma_{2,\rho} f \rangle_\pi}{\langle \rho,\Gamma_\rho f \rangle_\pi} = \inf_{\substack{\rho, f\\ \mathcal{A}_\rho(f) \neq 0}} \frac{\mathcal{B}_\rho(f)}{\mathcal{A}_\rho(f)},
    \end{equation}
    where $\rho$ is chosen in $\mathcal{P}_*(X)$. Similarly, we use the simplified notation $K_{ent}(X,Q,\pi)$ for $K_{\theta_{log}}(X,Q,\pi)$.
\end{definition}

We can think of \eqref{eq:intCDcond} as a (non-local) integrated curvature-dimension inequality against the probability density $\rho$, and we recover Definition \ref{def:entcurvclassical} via the special choices $\theta = \theta_{log}$ and $N = \infty$.
Moreover, as stated in \cite[Lemma 4.1]{KLMP-23}, the best lower $\theta_a$-curvature bound for $(X,Q,\pi)$ is given by $\min_{x \in X} K_N(x)$, where $K_N(x)$ is the (local) Bakry-\'Emery curvature at $x \in X$ introduced in Definition \ref{def:BEcurv}.  

\section{Positive curvature bounds for abelian Cayley graphs} \label{sec:Cayley}

In this section, we prove that every finite Markov chain $(X,Q,\pi)$ with underlying abelian Cayley graph structure with symmetric generating set $S$ and subset $S_2 \subset S$ of generators of order $2$ has strictly positive entropic curvature: it satisfies $CD_{ent}(K,\infty)$ with
\[ K=\frac{Q_{\min}}{50r^4}, \] where $r$ is the maximal order of its generators (see Theorem \ref{thm:abellowbdinfdim}). Moreover, we prove that it has non-negative finite-dimensional entropic curvature: it satisfies $CD_{ent}(0,N)$ with $N=|S|-0.07(|S\backslash S_2|)\le |S|$ (see Theorem \ref{thm:abellowbdfindim}). This should be compared with the Bakry-\'Emery curvature property $CD_a(0,|S|)$ (see Remark \ref{rem:comBEent}).  In Subsection \ref{subsec: uppbd-cycle}, we also derive an upper bound for entropic curvature of the cycle graph $C_r$ with the simple random walk and large enough $r$ (see Theorem \ref{thm:circleentbd}) and conclude that 
\[ r^{-4} \lesssim K_{ent}(C_r) \lesssim r^{-4}(\log r)^2.\]

\subsection{Special mapping representations and abelian Cayley graphs}
\label{subsec:specmap}

Let us recall the following definition of a \emph{mapping representation} of an irreducible, reversible finite Markov chain.

\begin{definition} [see {\cite[Definition 5.2]{EM-12}}]\label{def:maprep}
Let $(X,Q,\pi)$ be an irreducible, reversible, finite Markov chain.
A \emph{mapping representation} of $(X,Q,\pi)$ consists of a set $G$ of maps $\delta: X \to X$ and a function $c: X \times G \to [0,\infty)$ with the following properties:
\begin{itemize}
  \item[(a)] The Laplacian can be written as
  $$ \Delta f(x) = \sum_{\delta \in G} c(x,\delta) \nabla_\delta f(x),  $$
  where 
  $$ \nabla_\delta f(x) = f(\delta x) - f(x); $$
  \item[(b)] For every $\delta \in G$ there exists a \emph{unique} inverse of $\delta$, which we denote by $\delta^{-1} \in G$. Here we call $\eta \in G$ an  \emph{inverse} of $\delta$ if we have $\eta(\delta x) = x$ for all $x \in X$ with $c(x,\delta) > 0$;
  \item[(c)] For every $F: X \times G \to \mathbb{R}$,
  $$ \sum_{x \in X, \delta \in G} F(x,\delta)c(x,\delta) \pi(x) = \sum_{x \in X, \delta \in G} F(\delta x,\delta^{-1})c(x,\delta)\pi(x). $$
\end{itemize}
\end{definition}
Erbar and Maas derive the following explicit (non-negative) lower curvature bound for irreducible, reversible finite Markov chains admitting particular mapping representations.
\begin{theorem}[see {\cite[Proposition 5.4]{EM-12}}] \label{thm:EMmaprep} Let $(X,Q,\pi)$ be an irreducible, reversible finite Markov chain admitting a mapping representation $(G,c)$ with the following properties:
\begin{itemize}
\item[(i)] $\delta \circ \eta = \eta \circ \delta$ for all $\delta,\eta \in G$,
\item[(ii)] $c(\delta x,\eta) = c(x,\eta)$ for all $x \in X$ and $\delta,\eta \in G$.
\end{itemize}
Then we have
$$ {\rm{Ric}}(X,Q,\pi) \ge 0. $$
If $(G,c)$ satisfies additionally
\begin{itemize}
 \item[(iii)] $\delta \circ \delta = {\rm{id}}$ for all $\delta \in G$,   
\end{itemize}
then we have
$$ {\rm{Ric}}(X,Q,\pi) \ge 2 C $$
with
$$ C := \min\{ c(x,\delta) \mid \text{$x \in X$ and $\delta \in G$ 
 with $c(x,\delta) >0$} \}. $$
\end{theorem}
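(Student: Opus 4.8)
The plan is to verify the integrated Bochner-type inequality of Definition \ref{def:entcurvclassical} directly, exploiting the commuting mapping representation to rewrite $\mathcal{A}_\rho(f)$ and $\mathcal{B}_\rho(f)$ as sums over $\delta \in G$ that split along the group structure. Working with $\theta = \theta_{log}$ (or, more generally, any mean), I would fix $\rho \in \mathcal{P}_*(X)$ and $f \in \mathbb{R}^X$ and, using property (a) of the mapping representation together with the symmetrization identity (c), express $\mathcal{A}_\rho(f) = \tfrac12 \sum_{x,\delta} c(x,\delta)\,\widehat\rho(x,\delta x)\,(\nabla_\delta f(x))^2\,\pi(x)$ and similarly unfold $\mathcal{B}_\rho(f)$ via \eqref{eq:BrhoArhoLaprel}, i.e. $\mathcal{B}_\rho(f) = \tfrac12\langle \Delta\rho, \Gamma_\rho f\rangle_\pi - \mathcal{A}_\rho(f,\Delta f)$. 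The point of assumption (ii), $c(\delta x,\eta) = c(x,\eta)$, is that when one applies $\Delta = \sum_\eta c(\cdot,\eta)\nabla_\eta$ inside $\mathcal{B}_\rho$, the coefficients $c$ pass freely through the shift operators $\nabla_\eta$, and assumption (i), $\delta\circ\eta = \eta\circ\delta$, ensures that the resulting double sum over pairs $(\delta,\eta)$ is symmetric and that cross terms can be grouped into manifestly controllable pieces.

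Concretely, I expect the computation to reduce $\mathcal{B}_\rho(f) - K\mathcal{A}_\rho(f)$, after the symmetrization in (c) and reindexing via $\delta \mapsto \delta^{-1}$, to a sum over $\delta,\eta \in G$ and $x \in X$ of terms of the shape (weight)$\times$(difference of $\nabla f$ values along the commuting shifts)$^2$ plus a ``diagonal'' contribution involving the function $b$ from \eqref{eq: b-function-defn}: the key algebraic fact is identity \eqref{eq:rhoasderiv}, $\widehat\rho_{xy} = \partial_1\theta(\rho_x,\rho_y)\rho_x + \partial_2\theta(\rho_x,\rho_y)\rho_y$, which is exactly what makes the Laplacian terms recombine. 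For the first assertion, ${\rm Ric} \ge 0$, I would show every such term is nonnegative — the off-diagonal terms are sums of squares by the commutativity and the coefficient-invariance, and the diagonal term is a multiple of $b(\alpha,\beta,\gamma) + b(\gamma,\beta,\alpha) \ge 0$ (Proposition \ref{prop: b-function}(b)), with appropriate substitutions $\alpha = \rho(\delta x)$, $\beta = \rho(x)$, $\gamma = \rho(\delta^{-1}x)$. For the second assertion, under the extra involution hypothesis (iii) $\delta\circ\delta = {\rm id}$, one has $\delta^{-1} = \delta$, so $\alpha = \gamma$ on the diagonal and the ``In particular'' clause of Proposition \ref{prop: b-function}(f) (or part (g)) gives a strictly positive lower bound of the form $2\theta(\alpha,\beta) + \tfrac12 b(\alpha,\beta,\alpha) \ge \tfrac{1}{C}\beta$; tracking the constant $c(x,\delta) \ge C$ in front of each such term yields precisely ${\rm Ric} \ge 2C$.

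The main obstacle, I expect, is the bookkeeping in the expansion of $\mathcal{B}_\rho(f)$: the term $\mathcal{A}_\rho(f,\Delta f)$ produces a sum over ordered pairs of generators $(\delta,\eta)$, and one must carefully use reversibility (property (c)), the commutativity (i), and the translation-invariance of $c$ (ii) to symmetrize this in both the $x$-variable and the pair $(\delta,\eta)$, and then to recognize which combinations are perfect squares and which collapse (via telescoping over the abelian group) onto the diagonal $b$-terms. A secondary subtlety is that the off-diagonal squares must be shown to be genuinely nonnegative after the weights $\widehat\rho$ are attached — here one uses that $\theta$ is a mean (monotone, homogeneous) so that $\widehat\rho_{xy} \ge 0$, and the Cauchy–Schwarz-type grouping is what pins down the final inequality. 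Once the sum-of-squares-plus-$b$ decomposition is in place, the two curvature bounds follow by invoking the relevant parts of Proposition \ref{prop: b-function}, so the real work is purely in organizing the Bochner expansion cleanly enough that this decomposition becomes visible.
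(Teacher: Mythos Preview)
Your outline is essentially the same decomposition the paper recalls from \cite{EM-12} (equations \eqref{eq:Brho-EM}--\eqref{eq:T4-EM}): $\mathcal{B}_\rho(f) = (T_1+T_3) + T_4$, with $T_1+T_3$ a sum of squares $(\nabla_\delta f(\eta x) - \nabla_\delta f(x))^2 \widehat\rho(x,\delta x)c(\delta)c(\eta)\pi_0$ (nonnegative, with (i) and (ii) providing exactly the symmetrization you describe) and $T_4$ a sum of $b$-terms (nonnegative by Proposition~\ref{prop: b-function}(b)). This gives ${\rm Ric} \ge 0$ as you say.

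One correction concerning the second assertion: the bound ${\rm Ric} \ge 2C$ under (iii) does \emph{not} come from the $b$-function via Proposition~\ref{prop: b-function}(f) or (g) as you suggest, but directly and more simply from the $T_1+T_3$ squares term alone. When $\delta^2 = {\rm id}$, the term $\eta = \delta$ in \eqref{eq:T1T3-EM} gives
\[
(\nabla_\delta f(\delta x) - \nabla_\delta f(x))^2 = (2f(x) - 2f(\delta x))^2 = 4(\nabla_\delta f(x))^2,
\]
so this piece alone contributes $c(\delta)^2(\nabla_\delta f(x))^2\widehat\rho(x,\delta x)\pi_0$, which is exactly $2c(\delta)$ times the $\delta$-contribution to $\mathcal{A}_\rho(f)$. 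Summing over $\delta$ and $x$ and using $c(\delta) \ge C$ gives $T_1+T_3 \ge 2C\,\mathcal{A}_\rho(f)$, and one simply discards $T_4 \ge 0$ (cf.\ the $\delta \in S_2$ case in the proof of Theorem~\ref{thm:abellowbdinfdim}). Your proposed route via Proposition~\ref{prop: b-function}(f) would produce a lower bound proportional to $\rho(x)$ rather than $\widehat\rho(x,\delta x)$, which is not what sits inside $\mathcal{A}_\rho(f)$, so it would not land cleanly on the constant $2C$.
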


An example of a mapping representation $(G,c)$ satisfying (i) and (ii) is given in the case when the underlying combinatorial graph is a $d$-regular abelian Cayley graph ${\rm{Cay}}(X,S)$ with a symmetric set of generators $S = \{s_1,\dots,s_d\}$ (none of the $s_i$ is the identity) by setting $G := \{ \delta_1,\dots,\delta_d\}$, $\delta_i x = x s_i$ and $c(x,\delta_i) = Q(x,\delta_i x)$. With this choice, $(G,c)$ satisfies (iii) if and only if all generators $s_i$ are of order $2$, which is a very special case. Examples for that are Cayley graphs of Coxeter groups with the standard set of generators.

In this subsection, we show that the underlying combinatorial graph of any mapping representation $(G,c)$ satisfying (i) and (ii) has to be an abelian Cayley graph.  

\begin{theorem} \label{thm:maprepabcay}
Let $(G,c)$ be a mapping representation of an irreducible, reversible finite Markov chain $(X,Q,\pi)$ with condition $(i)$ and $(ii)$ from
Theorem \ref{thm:EMmaprep}.
  Then the underlying combinatorial graph $X$ carries the structure of an abelian Cayley graph with $c(x,\delta)=c(x,\delta^{-1})$ for all $x,\delta$. 
\end{theorem}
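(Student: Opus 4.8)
The plan is to extract from the mapping representation $(G,c)$ a finite abelian group acting simply transitively on $X$, identify $X$ with that group via a base point, and check that the resulting combinatorial graph is the associated Cayley graph. The first step is to show that $c$ is, in effect, a positive constant and that each $\delta\in G$ is a bijection. If $c(\cdot,\delta_0)\equiv 0$ for some $\delta_0\in G$ and $|G|\ge 2$, then the defining condition for an inverse of $\delta_0$ is imposed on no $x$ at all, so every element of $G$ is an inverse of $\delta_0$, contradicting uniqueness in (b); and if $|G|=1$ then $\Delta\equiv 0$, contradicting irreducibility when $|X|\ge 2$. Hence for each $\delta$ there is some $x$ with $c(x,\delta)>0$. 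Comparing (a) with $\Delta f(x)=\sum_y Q(x,y)(f(y)-f(x))$ shows that every edge $x\sim y$ of the (connected) graph $X$ satisfies $y=\eta x$ for some $\eta\in G$; combined with (ii), which gives $c(\eta x,\delta)=c(x,\delta)$, this forces $x\mapsto c(x,\delta)$ to be constant on $X$, say equal to $c(\delta)>0$. Since $c(x,\delta)=c(\delta)>0$ for all $x$, property (b) gives $\delta^{-1}\circ\delta=\mathrm{id}_X$ as maps on all of $X$; thus each $\delta\in G$ is injective, hence bijective, and the distinguished element $\delta^{-1}\in G$ is precisely the set-theoretic inverse of $\delta$. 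In particular $\delta\mapsto\delta^{-1}$ is an involution of $G$.

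By (i) the group $\Gamma:=\langle G\rangle\le\mathrm{Sym}(X)$ is abelian, and it is finite because $X$ is (hence finitely presented). Since adjacent vertices differ by an element of $G$ and $X$ is connected, the $\Gamma$-orbit of any vertex is all of $X$, so $\Gamma$ acts transitively; and if $g\in\Gamma$ fixes some $x_0$, then for every $y=hx_0$ we get $gy=ghx_0=hgx_0=hx_0=y$ by commutativity, so $g=\mathrm{id}_X$ and the action is free. Fixing a base point $x_0$, the map $\phi:\Gamma\to X$, $g\mapsto gx_0$, is thus a bijection under which each $\delta\in G$ acts by left translation. Set $S:=G\setminus\{\mathrm{id}\}$, a finite symmetric generating set of $\Gamma$ not containing the identity. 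For $g\ne h$ in $\Gamma$ one has $gx_0\sim hx_0$ iff $\delta(gx_0)=hx_0$ for some $\delta\in G$, iff $\delta g=h$ for some $\delta\in G$ (by freeness), iff $g^{-1}h\in G$ (using commutativity), iff $g^{-1}h\in S$; hence $\phi$ identifies the combinatorial graph on $X$ with ${\rm{Cay}}(\Gamma,S)$.

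It remains to prove $c(x,\delta)=c(x,\delta^{-1})$. Applying (c) to $F(x,\delta):=1$ if $\delta=\delta_0$ and $0$ otherwise, the left-hand side equals $c(\delta_0)\sum_x\pi(x)=c(\delta_0)$, while the right-hand side equals $\sum_{\delta:\,\delta^{-1}=\delta_0}c(\delta)\sum_x\pi(x)=c(\delta_0^{-1})$, using that $\delta\mapsto\delta^{-1}$ is an involution; hence $c(\delta_0)=c(\delta_0^{-1})$. The two points needing a little care are distinguishing the distinguished inverse $\delta^{-1}\in G$ from the set-theoretic inverse --- which is what forces the preliminary step ruling out $c\equiv 0$ and deducing $\delta^{-1}\circ\delta=\mathrm{id}_X$ everywhere rather than only on the support of $c(\cdot,\delta)$ --- and verifying that the transitive abelian action is actually free, so that $X$ is genuinely a Cayley graph rather than a proper quotient; both become routine once bijectivity of the maps in $G$ is established.
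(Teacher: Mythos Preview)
Your proof is correct and follows essentially the same approach as the paper: reduce $c(x,\delta)$ to a positive constant $c(\delta)$ via condition (ii) and connectedness, deduce bijectivity, then show the abelian group $\Gamma=\langle G\rangle$ acts simply transitively using commutativity. The one minor variation is that for $c(\delta)=c(\delta^{-1})$ you apply condition (c) to $F$ depending only on $\delta$, whereas the paper applies it to $F=1_{(x,\delta)}$ and iterates (obtaining as a byproduct that $\pi$ is constant, which is not part of the statement); your route is slightly more direct for what is actually claimed.
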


\begin{proof}
Note that property (a) of a mapping representation implies that the set $\{ \delta x: \delta \in G\}$ must include all neighbours of $x \in X$. Without loss of generality, we can assume 
$|X| \ge 2$ and $|G| \ge 2$.
Recall that irredicibility implies that the underlying combinatorial graph $X$ is connected. We first observe that $c(x,\delta)=c(\delta)$, i.e., $c(x,\delta)$ is independent of $x$. This indeed follows directly from condition (ii) in Theorem \ref{thm:EMmaprep} and connectedness.

    We next show that $c>0$.
    Suppose $c(\delta)=0$ for some $\delta \in G$. Then every $\eta \in G$ is an inverse of $\delta$ 
    in the sense of Definition \ref{def:maprep}(b). This contradicts the uniqueness requirement for inverses if $|G| \ge 2$. This implies that our maps $\delta: X \to X$ are bijective maps.
    We next show that $c(\delta)=c(\delta^{-1})$ and that $\pi$ is constant.
    From condition (c) in Definition \ref{def:maprep}, we get $c(\delta)\pi(x) = c(\delta^{-1})\pi(\delta x)$
    by choosing $F=1_{x,\delta}$.
    Hence, for all $k \in \mathbb N$
    \[
    \frac{\pi(\delta^k x)}{\pi(x)} = \left(\frac {c(\delta)}{c(\delta^{-1})} \right)^k.
    \]
    Letting $k$ go to infinity shows $c(\delta) = c(\delta^{-1})$ and thus $\pi(\delta x) = \pi(x)$ for all $\delta \in G$. By connectedness, this implies that $\pi$ is constant. 

    Now let $S := \{ \delta \in G: \delta \neq {\rm{id}_X} \}$ and $\Gamma$ be the finite group generated by $S$ via composition of bijective maps of $X$ into itself. Condition (a) in Definition \ref{def:maprep} guarantees that the set $\{ \delta x: \delta \in S \}$ contains all neighbours of $x$ and that $\Gamma$ acts transitively on $X$ by connectedness. It remains to show that $\Gamma$ acts simply transitively on $X$. Assume for $\delta \in \Gamma$ and some $x \in X$ we have $\delta x= x$. This implies $\delta \mu x = \mu \delta x = \mu x$ for all $\mu \in S$ by condition (i) in Theorem \ref{thm:EMmaprep} and, therefore, $\delta = {\rm{id}}_X$, by connectedness. Consequently, we can identify $X$ with $\Gamma$ and, as a combinational graph, it is naturally isomorphic to the abelian Cayley graph of $\Gamma$ with generating set $S$. 
\end{proof}

\medskip

In preparation for the following subsections, we recall certain facts from \cite{EM-12} for Markov chains $(X,Q,\pi)$ with mapping representations and provide some further notation. We assume the mapping representations to satisfy the extra conditions (i) and (ii) mentioned in Theorem \ref{thm:maprepabcay}.
Therefore, we have $c(x,\delta) = c(\delta)$ and $c(\delta) = c(\delta^{-1})$, and that $\pi$ is constant, so we can set $\pi \equiv \pi_0$. We have with the notation given in \cite[pages 1024-1026]{EM-12},
\begin{align}
    \mathcal{A}_\rho(f) &= \frac{1}{2} \sum_{x,\delta}  (\nabla_\delta f(x))^2 \widehat \rho(x,\delta x) c(\delta)\pi_0 , \\
    \mathcal{B}_\rho(f) &= T_1 + T_3 + T_4, \label{eq:Brho-EM}\\
    T_1 + T_3 &:= \frac{1}{4} \sum_{x,\delta,\eta} \underbrace{\left( \nabla_\delta f(\eta x) - \nabla_\delta f(x) \right)^2 \widehat \rho(x,\delta x) c(\delta)c(\eta) \pi_0}_{\ge 0}, \label{eq:T1T3-EM}\\
    T_4 &:= \frac{1}{4} \sum_{x,\delta,\eta} (\nabla_\delta f(\eta x))^2 \cdot \\ & \phantom{=} \cdot \underbrace{\left[ \widehat \rho_1(\eta x,\delta \eta x)\rho(x) + \widehat \rho_2(\eta x,\delta \eta x)\rho(\delta x) - \widehat \rho(x,\delta x) \right]}_{\text{$\ge 0$ by \eqref{eq: b-function-defn}}} c(\delta)c(\eta)\pi_0, \label{eq:T4-EM}
\end{align}
where we used the notation $\widehat \rho_i(x,y) = \partial_i \theta(\rho(x),\rho(y))$.

We also introduce, for every $\delta \in S$, the operator
\begin{align*} 
(\Delta_\delta f)(x) :=\begin{cases}
    c(\delta) ( f(\delta x) + f(\delta^{-1}x) - 2 f(x)) &\text{ if } \delta\not\in S_2, \\
    c(\delta) ( f(\delta x) - f(x)) &\text{ if } \delta\in S_2 ,
\end{cases}
\end{align*}
where $S_2 \subset S$ denote the set of all $\delta \in S$ of order $2$. We also note that \[ \Delta f(x)=\frac{1}{2}\sum_{\delta\not\in S_2}\Delta_\delta f(x)+\sum_{\delta\in S_2}\Delta_\delta f(x).\]

In the above expressions for $T_1+T_3$ and $T_4$, we 
consider only terms with $\eta$ equal to $\delta$ and $\delta^{-1}$ in the case $\delta\not\in S_2$ and terms with $\eta$ equal to $\delta$ in the case $\delta \in S_2$, and we
obtain the following lower bounds:
\begin{align}
    T_1 + T_3 &\ge \frac{1}{4} \sum_x\sum_{\delta\not\in S_2} \widehat \rho(x,\delta x) \pi_0 \cdot \nonumber \\
    &
    \cdot \left[ \underbrace{c(\delta)^2(f(\delta^2 x) + f(x) -2 f(\delta x))^2}_{= (\Delta_\delta f(\delta x))^2} + \underbrace{c(\delta)^2(2 f(x) - f(\delta x) - f(\delta^{-1} x))^2}_{(\Delta_\delta f(x))^2} \right] \nonumber \\
    &\phantom{\ge} + \frac{1}{4} \sum_x\sum_{\delta\in S_2} c(\delta)^2(2f(x)-2f(\delta x))^2 \widehat \rho(x,\delta x) \pi_0
    \nonumber \\
    &= \frac{\pi_0}{4} \sum_{x} \left( \sum_{\delta\not\in S_2}(\Delta_\delta f(x))^2 (\widehat \rho(\delta^{-1}x,x) + \widehat\rho(x,\delta x))+
    4\sum_{\delta\in S_2}(\Delta_\delta f(x))^2 \widehat\rho(x,\delta x) \right) \label{eq:T1T3}
\end{align}
and
\begin{align}
  T_4 &\ge \frac{\pi_0}{4} \sum_{x}\sum_{\delta\not\in S_2} c(\delta)^2 \bigg( (\nabla_\delta f(\delta x))^2 [\,\widehat \rho_1(\delta x,\delta^2 x)\rho(x) + \widehat \rho_2(\delta x,\delta^2 x)\rho(\delta x) - \widehat \rho(x,\delta x) \,] \nonumber \\
  &\phantom{\ge \frac{\pi_0}{4} \sum_{x}\sum_{\delta\not\in S_2} c(\delta)^2} +(\nabla_\delta f(\delta^{-1} x))^2 [\,\widehat \rho_1(\delta^{-1} x,x)\rho(x) + \widehat \rho_2(\delta^{-1} x,x)\rho(\delta x) - \widehat \rho(x,\delta x)\,]\bigg) \nonumber\\
  &\phantom{\ge}+\frac{\pi_0}{4} \sum_{x}\sum_{\delta\in S_2} c(\delta)^2(\nabla_\delta f(x))^2 [\, \widehat \rho_1(\delta x, x)\rho(x) + \widehat \rho_2(\delta x, x)\rho(\delta x) - \widehat \rho(x,\delta x) \,]\nonumber \\
  &= \frac{\pi_0}{4} \sum_{x}\sum_{\delta\not\in S_2} c(\delta)^2 \bigg(
  (\nabla_\delta f(\delta x))^2 b_\rho(x,\delta x,\delta^2 x) + (\nabla_\delta f(\delta^{-1} x))^2 b_\rho(\delta x,x,\delta^{-1}x)\bigg)\nonumber \\
  &\phantom{\ge}+\frac{\pi_0}{4} \sum_{x}\sum_{\delta\in S_2} c(\delta)^2(\nabla_\delta f(x))^2 b_\rho(x,\delta x, x)\nonumber\\
  &= \frac{\pi_0}{4} \sum_{x}\sum_{\delta\not\in S_2} c(\delta)^2 \bigg(
  (\nabla_\delta f(x))^2 b_\rho(\delta^{-1}x,x,\delta x) + (\nabla_{\delta^{-1}}f(x))^2 b_\rho(\delta x,x,\delta^{-1}x)\bigg)\nonumber\\
  &\phantom{\ge}+\frac{\pi_0}{4} \sum_{x}\sum_{\delta\in S_2} c(\delta)^2(\nabla_\delta f(x))^2 b_\rho(\delta x,x, \delta x),
  \label{eq:T4}
\end{align}
  where we used the shortened notation
  \begin{align} \label{defn: b-function-2}
      b_\rho(x,y,z) &:=
      b(\rho(x), \rho(y), \rho(z)) \\
      &\phantom{:}=\widehat \rho_1(y,z) \rho(x) + \widehat \rho_2(y,z) \rho(y) - \widehat \rho(x,y) \nonumber\\
      &\phantom{:}=\widehat \rho_1(z,y) \rho(y) + \widehat \rho_2(z,y) \rho(x) - \widehat \rho(x,y). \nonumber
  \end{align}

\subsection{(Asymptotically) sharp lower curvature bound for infinite dimension} \label{subsec:lowbdinf}

The following curvature result holds for Markov chains with mapping representations satisfying the additional conditions (i) and (ii) of Theorem \ref{thm:EMmaprep}. In view of Theorem \ref{thm:maprepabcay}, we formulate it in the equivalent setting of abelian Cayley graphs.

\begin{theorem} \label{thm:abellowbdinfdim}
    Let $(X,Q,\pi)$ be 
    an irreducible, reversible 
    finite Markov chain. Assume that the underlying combinatorial graph $X$ carries the structure of an abelian Cayley graph with generatoring set $S$ such that
    $$ Q(x,\delta x) = Q(y,\delta y) = Q(x,\delta^{-1} x) >0 \quad \text{for all $x,y \in X$ and $\delta \in S$.} $$
    Let $Q_{\rm{min}} := \min_{\delta \in S} Q(x,\delta x)$
    and $r$ be the maximal order of the generators in $S$. Then $(X,Q,\pi)$ satisfies $CD_{ent}(K,\infty)$ with
    \[
    K = \frac{Q_{\rm{min}}}{50 r^4}.
    \]
\end{theorem}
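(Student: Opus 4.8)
The plan is to run a Bochner-type argument generator by generator. Under the hypotheses of the theorem, $(X,Q,\pi)$ carries a mapping representation satisfying conditions (i),(ii) of Theorem~\ref{thm:EMmaprep}; hence $\pi\equiv\pi_0$ is constant, $c(x,\delta)=c(\delta)=c(\delta^{-1})=Q(x,\delta x)\ge Q_{\min}$, and the orbit-wise identities and bounds recalled before Subsection~\ref{subsec:lowbdinf} are available: $\mathcal{B}_\rho(f)=T_1+T_3+T_4$ as in \eqref{eq:Brho-EM}, with $T_1+T_3$ bounded below by \eqref{eq:T1T3} and $T_4$ by \eqref{eq:T4}, while $\mathcal{A}_\rho(f)$ is the corresponding sum over pairs $(x,\delta)$. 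I fix a generator $\delta$ of order $n:={\rm ord}(\delta)\le r$ and one orbit of $\langle\delta\rangle$ on $X$, and write $a_i:=f(\delta^i x)$, $\rho_i:=\rho(\delta^i x)$, $v_i:=a_{i+1}-a_i$ (so $\sum_i v_i=0$ around the orbit) and $\theta_i:=\theta_{\log}(\rho_i,\rho_{i+1})$. Cancelling $\pi_0$ and one factor $c(\delta)\ge Q_{\min}$ and using $n\le r$, the inequality $\mathcal{B}_\rho(f)\ge\frac{Q_{\min}}{50r^4}\mathcal{A}_\rho(f)$ follows once we prove a one-dimensional cyclic estimate of the shape
$$ E_1+E_2\;\ge\;\frac{c}{n^4}\sum_i v_i^2\theta_i, $$
with $E_1:=\sum_i(v_i-v_{i-1})^2(\theta_{i-1}+\theta_i)$ (from \eqref{eq:T1T3}), $E_2:=\sum_i v_i^2\bigl(b(\rho_{i-1},\rho_i,\rho_{i+1})+b(\rho_{i+2},\rho_{i+1},\rho_i)\bigr)$ (from \eqref{eq:T4}), and $c$ an explicit absolute constant.

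The case $\delta\in S_2$ is immediate: the orbit is a single edge, the second differences degenerate, and Proposition~\ref{prop: b-function}(g) in the form $4\theta_{\log}(\alpha,\beta)+b(\alpha,\beta,\alpha)\ge 2(\alpha+\beta)\ge 4\theta_{\log}(\alpha,\beta)$ forces the ratio $\mathcal{B}/\mathcal{A}$ on such an orbit to be $\ge 2c(\delta)\ge 2Q_{\min}$, far more than needed; these generators therefore cost nothing here.

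For $\delta$ of order $\ge 3$ the key manipulation is that $E_2$ carries, at each vertex $i$, the pair $v_i^2 b(\rho_{i-1},\rho_i,\rho_{i+1})$ and $v_{i-1}^2 b(\rho_{i+1},\rho_i,\rho_{i-1})$; by the elementary inequality $px^2+qy^2\ge\frac{pq}{p+q}(x-y)^2$ this pair dominates $\frac{pq}{p+q}(v_i-v_{i-1})^2$ with $p,q$ the two $b$-values centred at $\rho_i$, and, adding the corresponding term $(\theta_{i-1}+\theta_i)(v_i-v_{i-1})^2$ of $E_1$ and invoking Proposition~\ref{prop: b-function}(f), one bounds the total below by $\frac1C\rho_i(v_i-v_{i-1})^2$ with $C\le 0.93$. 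This reduces matters to a weighted discrete second-order Poincaré inequality $\sum_i\rho_i(v_i-v_{i-1})^2\gtrsim n^{-4}\sum_i v_i^2\theta_i$ on the $n$-cycle. When $\rho$ is close to constant this is easy and wasteful: the cycle's spectral gap $\sim n^{-2}$ together with $\theta_i\le\max(\rho_i,\rho_{i+1})$ already gives $\gtrsim n^{-2}$. The real difficulty is when $\rho$ oscillates strongly or has a deep well, since then $\sum_i v_i^2\theta_i$ can be far larger than the reduced right-hand side; there one keeps part of $E_2$ unconverted and exploits that the offending $b$-terms are large --- quantitatively via Proposition~\ref{prop: b-function}(d), and, in the deep-well regime, via the stronger growth of $b(\alpha,\beta,\gamma)$ (of order roughly $\beta^2/\gamma$, which (d) badly underestimates) when the centre $\beta$ dominates a neighbour --- carried against precisely the $v_i^2$ that appear in the inflated edges of $\sum_i v_i^2\theta_i$. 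A dichotomy according to the size of the largest ratio $\rho_{i\pm1}/\rho_i$, with the Poincaré step handling the moderate regime and the $b$-mass the extreme regime, then finishes the one-dimensional estimate; chasing constants through Proposition~\ref{prop: b-function}(d),(f),(g) produces the explicit $1/50$.

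The main obstacle I anticipate is exactly this one-dimensional estimate with full uniformity over positive sequences $\rho$: one must verify that whatever adversarial $\rho$ inflates $\sum_i v_i^2\theta_i$, the $b$-mass in $E_2$ is both large enough and sitting against the right $v_i^2$ to compensate, and that the two regimes of the dichotomy glue together with an honest absolute constant. By contrast, the reduction to this estimate, the involution case, and the final assembly (summing over orbits and generators, using $n\le r$ and $c(\delta)\ge Q_{\min}$) are comparatively routine.
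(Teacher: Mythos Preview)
Your reduction to an orbit-by-orbit cyclic inequality, the handling of $\delta\in S_2$, and the final assembly are all sound and match the paper. The gap is in the core one-dimensional estimate for $\delta\notin S_2$. Once you convert $E_1+E_2$ via Proposition~\ref{prop: b-function}(f) into $\frac{1}{C}\sum_i\rho_i(v_i-v_{i-1})^2$, the inequality you then need, $\sum_i\rho_i(v_i-v_{i-1})^2\ge c\,n^{-4}\sum_i v_i^2\theta_i$, is false uniformly in $\rho$: take $v_i=\pm1$ on the two halves of the cycle (so only two second differences are nonzero) and let $\rho$ be tiny at precisely those two vertices and $1$ elsewhere; the left side is arbitrarily small while the right side is of order $n^{-3}$. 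You see this and propose a $\rho$-ratio dichotomy with unconverted $E_2$-mass absorbing the extreme regime, but this is not carried out, and bounded ratios alone do not rescue the weighted Poincar\'e step either (geometric-type $\rho$ with $\rho_{i-1}\rho_{i+1}/\rho_i^2\approx1$ kills the $b$-mass via Proposition~\ref{prop: b-function}(d) while still allowing $\rho$ to be small where the second differences of $v$ live). The missing ingredient is a mechanism tying the location of large $v_i^2\theta_i$ to the location of usable $E_1$- or $E_2$-mass.

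The paper supplies exactly that, by a different dichotomy. It first localises: pick the edge $m$ where $\theta_m v_m^2$ is maximal, so $A(\delta,\mathcal{O})\le \tfrac{1}{2}n\,c(\delta)\pi_0\,\theta_m v_m^2$, and let $I$ be the maximal arc through $\{m,\delta m\}$ on which $\rho\ge e^{-2}\theta_m$. The case split is on $f$, not $\rho$. \emph{Case~1:} some $v_i$ with an endpoint in $I$ satisfies $|v_i-v_m|>\tfrac12|v_m|$. Then $E_1$ alone suffices: on $I$ one has $\theta_{i-1}+\theta_i\ge e^{-2}\theta_m$, and a telescoping Cauchy--Schwarz over a subarc of $I$ gives $\sum_{I}(\Delta_\delta f)^2\ge c(\delta)^2 n^{-1}(v_i-v_m)^2$, hence $B\ge \mathrm{const}\cdot c(\delta)n^{-2}A$. \emph{Case~2:} all such $v_i$ stay within $\tfrac12|v_m|$ of $v_m$, hence share its sign; since $\sum_i v_i=0$ on the orbit this forces $I\ne\mathcal{O}$, so $\rho$ drops below $e^{-2}\theta_m$ at both ends of $I$. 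Now use only $E_2$, not via (f) but via Proposition~\ref{prop: b-function}(d): on $I$ all $v_i^2,v_{i-1}^2\ge \tfrac14 v_m^2$, so $E_2\ge \mathrm{const}\cdot v_m^2\sum_{I}\rho_i\bigl(\log\rho_{i-1}+\log\rho_{i+1}-2\log\rho_i\bigr)^2$. The forced boundary drop of $\log\rho$ (by at least $2$ over at most $n$ steps on each side of a point in $I$ where $\rho\ge\theta_m$) gives, again by telescoping, $\sum_{I}(\Delta_\delta\log\rho)^2\ge \mathrm{const}\cdot c(\delta)^2 n^{-3}$, whence $B\ge \mathrm{const}\cdot c(\delta)n^{-4}A$. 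The maximal-edge localisation and the $f$-based case split are the ideas your sketch does not contain.
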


\begin{proof}
Note that we have $r \ge 2$, since we assume that our group is non-trivial and that $Q_{\min} \le c(\delta)$ for all $\delta \in S$.
For $\delta \in S$, we partition $X$ into disjoint $\delta$-orbits $\mathcal{O}_x := \delta^\mathbb{Z} x$.
The set of all $\delta$-orbits is denoted by $X / \sim_\delta$ with the canonically associated equivalence relation $\sim_\delta$ on $X$. For $\mathcal{O} \subset X$, we define 
\[A(\delta, \mathcal{O}) 
:=\frac{c(\delta)\pi_0}{2} \sum_{x \in \mathcal{O}} \widehat \rho(x,\delta x) (\nabla_\delta f(x))^2,
 \]
and we define $B(\delta,\mathcal{O})$ according to the order of $\delta$ as follows. In the case $\delta\not\in S_2$, we define
\begin{align} \label{eq:B-delta-O}
B(\delta,\mathcal{O}) :=&\frac{\pi_0}{4}\sum_{x \in \mathcal{O}} \left[
    (\Delta_\delta f(x))^2(\widehat\rho(x,\delta x) + \widehat\rho(x,\delta^{-1} x)) \phantom{\bigg( \bigg)}\right. \\& 
    + \left. c(\delta)^2 \bigg(
  (\nabla_\delta f(x))^2 b_\rho(\delta^{-1}x,x,\delta x) + (\nabla_{\delta^{-1}} f(x))^2 b_\rho(\delta x,x,\delta^{-1}x)\bigg)\right].
  \nonumber
\end{align}
In the case of $\delta\in S_2$, we define
\begin{align} \label{eq:B-delta-O-2}
B(\delta,\mathcal{O}) :=\frac{c(\delta)^2 \pi_0}{4}\sum_{x \in \mathcal{O}} 
    4(\nabla_\delta f(x))^2\widehat\rho(x,\delta x) +
  (\nabla_\delta f(x))^2 b_\rho(\delta x,x,\delta x).
\end{align}
Then we have
\[
\mathcal A_\rho(f) = \sum_{\delta \in S} \sum_{\mathcal{O} \in X/\sim_\delta} A(\delta,\mathcal{O}),
\]
and
\[
\mathcal B_\rho(f) \geq \sum_{\delta \in S} \sum_{\mathcal{O} \in X/\sim_\delta} B(\delta,\mathcal{O}).
\]

Let $K:=0.02Q_{\min}r^{-4}$. In order to show that $K_{ent}(X,Q,\pi) \ge K$, it suffices to show that $B(\delta,\mathcal{O}) \ge 0.02c(\delta)r^{-4}A(\delta,\mathcal{O})$ for every $\delta\in S$ and $O\in X/\sim_\delta$. 

When $\delta\in S_2$ (which means $\mathcal{O}=\{x,\delta x\}$ for some $x$), the terms $A(\delta,\mathcal{O})$ and $B(\delta,\mathcal{O})$ are simplified to
\begin{align*}
    A(\delta,\mathcal{O}) &=c(\delta)\pi_0 \widehat \rho(x,\delta x) (\nabla_\delta f(x))^2, \\
    B(\delta,\mathcal{O}) &= 2c(\delta)A(\delta,\mathcal{O}) + \frac{c(\delta)^2\pi_0}{4}
  (\nabla_\delta f(x))^2 \Big(b_\rho(\delta x,x,\delta x)+b_\rho(x,\delta x,x)\Big) ,
\end{align*}
which implies $B(\delta,\mathcal{O})\ge 2c(\delta)A(\delta,\mathcal{O})$.

\smallskip

It remains to consider the case when $\delta\not\in S_2$. Let $m \in \mathcal{O}$ be
such that
\[
\widehat \rho(m,\delta m)(\nabla_\delta f(m))^2
\]
is maximal, and set $\rho_0:=\widehat \rho(m,\delta m)$. We can assume $\rho_0 >0$ and $\nabla_\delta f(m) \neq 0$ for, otherwise, $A(\delta,\mathcal{O})=0$ and there is nothing to prove. Since $|\mathcal{O}|\le r$, it follows that 
\begin{align} \label{eq:A-delta-O_est}
    A(\delta,\mathcal{O}) \le \frac{1}{2} rc(\delta) \pi_0\rho_0(\nabla_\delta f(m))^2. 
\end{align}

Note that at least one of the vertices in $\{m,\delta m\}$ satisfies $\rho(v) \ge \rho_0$. Let $I=(\delta^j m,\delta^{j+1} m,\ldots, \delta^k m)$ with $j \le k$ be the maximal injective sequence such that it contains at least one of $\{m,\delta m\}$ and that $\rho(x) \geq \dfrac{1}{e^2}\rho_0$ for all $x \in I$.
To estimate the term $B(\delta,\mathcal{O})$, we consider the following two separated cases.

\medskip

\textbf{Case 1:} There exists $x_0\in \{\delta^{j-1}m\}\cup I$ such that $|\nabla_\delta f(x_0)-\nabla_\delta f(m)| > \frac{1}{2}|\nabla_\delta f(m)|$.

\smallskip

\noindent

Recall that $\dfrac{1}{c(\delta)}\Delta_\delta f(x)=f(\delta x) + f(\delta^{-1}x) - 2f(x)=\nabla_\delta f(x)-\nabla_{\delta} f(\delta^{-1}x)$. Restricting to some subinterval $J= (v,\delta v,\ldots, \delta^{-1}w, w)$ of $I$ gives
\[
\sum_{x \in I} (\Delta_\delta f(x))^2 \ge \frac{1}{|J|} \left(\sum_{x\in J} \Delta_\delta f(x) \right)^2 \ge  \frac{c(\delta)^2}{r} \left(\nabla_\delta f(w) - \nabla_{\delta} f(\delta^{-1}v) \right)^2.
\]
In other words, the inequality
\[
\sum_{x \in I} (\Delta_\delta f(x))^2 \ge  \frac{c(\delta)^2}{r} \left(\nabla_\delta f(y) - \nabla_{\delta} f(z) \right)^2
\] 
holds true for any two edges $\{y,\delta y\}$ and $\{z, \delta z\}$ with each having at least one vertex in $I$.
Applying the above inequality with $y = x_0$ and $z=m$ to the expression \eqref{eq:B-delta-O} and using the fact that $\widehat\rho(x,\delta x) + \widehat\rho(x,\delta^{-1} x) \geq \frac{1}{e^2}\rho_0$ for all $x\in I$, we obtain
\begin{align*}
    B(\delta,\mathcal{O}) &\ge \frac{\pi_0}{4}\sum_{x \in \mathcal{O}} 
    (\Delta_\delta f(x))^2(\widehat\rho(x,\delta x) + \widehat\rho(x,\delta^{-1} x)) \\
    &\ge \frac{c(\delta)^2\pi_0\rho_0}{16e^2r}(\nabla_\delta f(m))^2.
\end{align*}
Together with \eqref{eq:A-delta-O_est}, we have
\[
\frac{B(\delta,\mathcal{O})}{A(\delta,\mathcal{O})} \ge \frac{c(\delta)}{8e^2r^2} > \frac{0.02c(\delta)}{r^4}.
\]

\smallskip

\textbf{Case 2:} Every $x\in \{\delta^{j-1}m\}\cup I$ satisfies $|\nabla_\delta f(x)-\nabla_\delta f(m)| \le \frac{1}{2}|\nabla_\delta f(m)|$.

This condition implies that, for all $x\in I$, we have
\[
|\nabla_\delta f(x)|\ge \frac{1}{2}|\nabla_\delta f(m)| \quad \text{and} \quad |\nabla_{\delta^{-1}} f(x)|\ge \frac{1}{2}|\nabla_\delta f(m)|\,.
\]
We can then estimate $B(\delta,\mathcal{O})$ by using \eqref{eq:B-delta-O}, Proposition \ref{prop: b-function}(d), and the assumption that $\rho(x)\ge \frac{1}{e^2}\rho_0$ for all $x\in I$:
\begin{align}
    B(\delta,\mathcal{O}) &\geq \frac{\pi_0c(\delta)^2(\nabla_\delta f(m))^2}{16}\sum_{x \in I}  (b_\rho(\delta^{-1}x,x,\delta x) +  b_\rho(\delta x,x,\delta^{-1}x)) \nonumber \\
    &\geq \frac{\pi_0c(\delta)^2(\nabla_\delta f(m))^2}{200}\sum_{x \in I} \rho(x) (\log \rho(\delta^{-1}x)+\log \rho(\delta x)-2\log \rho(x))^2 \nonumber \\
    &\ge \frac{\pi_0\rho_0}{200e^2}(\nabla_\delta f(m))^2 \sum_{x \in I}  (\Delta_\delta \log \rho(x))^2. \label{eq:BdelOlower}
\end{align}
Furthermore, the condition in this case implies that $\nabla_\delta f(x)$ takes the same sign as $\nabla_\delta f(m)$ for all $x\in I$. In particular, $\sum_{x\in I} \nabla_\delta f(x)\not=0=\sum_{x\in\mathcal{O}} \nabla_\delta f(x)$. Thus $I\not=\mathcal{O}$.

Now consider the outer boundary points of $I$, namely $y=\delta^{j-1}m$ and $z=\delta^{k+1}m$ (with possibility to coincide).
Then we know by definition that $\rho(y),\rho(z)< \frac{1}{e^2}\rho_0$. Moreover $\rho(u)\geq \rho_0$ for some $u\ \in I \cap \{ m,\delta m\}$. Let $u=\delta^{\ell} m$. Hence there exist $v = \delta^\alpha m$ and $w=\delta^\beta m$ in $I$ (with $\alpha\le\ell\le\beta$) such that
\begin{align*}
\nabla_\delta \log \rho (\delta^{-1}v) \ge \frac{\log (e^2)}{r}, \quad  \text{and} \quad
\nabla_\delta \log \rho (w) \le -\frac{\log (e^2)}{r}.
\end{align*} 
Thus,
\begin{align*}
\sum_{x \in I} \left( \Delta_\delta \log\rho(x) \right)^2 &\ge \frac{1}{|\beta-\alpha+1|}\left(\sum_{i=\alpha}^\beta (\Delta_\delta \log \rho)(\delta^i m)\right)^2 \\ 
&\ge \frac{c(\delta)^2}{r}\left(\nabla_\delta \log \rho (w) -  \nabla_{\delta} \log \rho (\delta^{-1}v)\right)^2 \ge \frac{16c(\delta)^2}{r^3}.
\end{align*}
Applying this to \eqref{eq:BdelOlower} gives
\[
     B(\delta,\mathcal{O}) \ge \frac{0.08\pi_0\rho_0c(\delta)^2}{e^2r^3}(\nabla_\delta f(m))^2 .
\]
Together with \eqref{eq:A-delta-O_est}, we have
\begin{align*}
    \frac{B(\delta,\mathcal{O})}{A(\delta,\mathcal{O})} \ge \frac{0.02c(\delta)}{r^4}.
\end{align*}

\end{proof}

\subsection{Lower curvature bound for finite dimension}
\label{subsec:lowbdfin}

In this subsection, we derive a finite dimensional lower curvature bound for Markov chains with underlying abelian Cayley graph structure, where the dimension is expressed in terms of the cardinalities of the symmetric set of generators and its subset of generators of order $2$.



\begin{theorem} \label{thm:abellowbdfindim}
    Let $(X,Q,\pi)$ be 
    an irreducible, reversible finite Markov chain. Assume that the underlying combinatorial graph $X$ carries the structure of an abelian Cayley graph with generating set $S$ such that
    $$ Q(x,\delta x) = Q(y,\delta y) = Q(x,\delta^{-1} x) >0 \quad \text{for all $x,y \in X$ and $\delta \in S$.} $$
    Then $(X,Q,\pi)$ satisfies $CD_{ent}(0,N)$ with $N=0.93|S|+0.07|S_2|$, where $S_2 \subset S$ are the generators of order $2$. 
\end{theorem}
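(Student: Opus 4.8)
The plan is to reuse the orbit decomposition set up in the proof of Theorem~\ref{thm:abellowbdinfdim}: there $\pi\equiv\pi_0$ is constant, and one has $\mathcal B_\rho(f)\ge\sum_{\delta\in S}\sum_{\mathcal O\in X/\sim_\delta}B(\delta,\mathcal O)$, with $B(\delta,\mathcal O)$ given by \eqref{eq:B-delta-O} for $\delta\notin S_2$ and by \eqref{eq:B-delta-O-2} for $\delta\in S_2$. Fix a set $S^+$ containing one representative of each pair $\{\delta,\delta^{-1}\}$ with $\delta\notin S_2$, so that $|S^+|=\tfrac12|S\setminus S_2|$ and $\Delta f(x)=\sum_{\delta\in S^+}\Delta_\delta f(x)+\sum_{\delta\in S_2}\Delta_\delta f(x)$. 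Writing $C\le0.93$ for the constant of Proposition~\ref{prop: b-function}(f) and setting $N':=C|S\setminus S_2|+|S_2|\ (\le 0.93|S|+0.07|S_2|=:N)$, it suffices — since $CD_{ent}(0,\cdot)$ becomes weaker as the dimension increases and $\langle\rho,(\Delta f)^2\rangle_\pi\ge0$ — to prove $\mathcal B_\rho(f)\ge\tfrac1{N'}\langle\rho,(\Delta f)^2\rangle_\pi=\tfrac{\pi_0}{N'}\sum_{x\in X}\rho(x)(\Delta f(x))^2$ for all $\rho\in\mathcal P_*(X)$, $f\in\mathbb R^X$.

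First I would prove a per-orbit estimate of the form $B(\delta,\mathcal O)\ge c_\delta\,\pi_0\sum_{x\in\mathcal O}\rho(x)(\Delta_\delta f(x))^2$, with $c_\delta=\tfrac1{4C}$ when $\delta\notin S_2$ and $c_\delta=1$ when $\delta\in S_2$. For $\delta\notin S_2$ this is in fact termwise: at $x\in\mathcal O$ write $\alpha=\rho(\delta^{-1}x)$, $\beta=\rho(x)$, $\gamma=\rho(\delta x)$, $p=\nabla_\delta f(x)$, $q=\nabla_{\delta^{-1}}f(x)$ (so $\Delta_\delta f(x)=c(\delta)(p+q)$); the $x$-summand of \eqref{eq:B-delta-O} equals
\[
c(\delta)^2\bigl[(p+q)^2(\theta(\alpha,\beta)+\theta(\beta,\gamma))+p^2b(\alpha,\beta,\gamma)+q^2b(\gamma,\beta,\alpha)\bigr],
\]
and the elementary inequality $p^2S+q^2T\ge(p+q)^2\,ST/(S+T)$ (equivalently $(pS-qT)^2\ge0$) together with Proposition~\ref{prop: b-function}(f) bounds this below by $\tfrac1C\,c(\delta)^2(p+q)^2\beta=\tfrac1C(\Delta_\delta f(x))^2\rho(x)$. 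For $\delta\in S_2$ the orbit is $\mathcal O=\{x_0,\delta x_0\}$; using $b(\alpha,\beta,\alpha)=b(\beta,\alpha,\beta)$ from Proposition~\ref{prop: b-function}(g) the two summands of \eqref{eq:B-delta-O-2} coincide, so with $\beta=\rho(x_0)$, $\gamma=\rho(\delta x_0)$ one gets $B(\delta,\mathcal O)=\tfrac12 c(\delta)^2\pi_0(\nabla_\delta f(x_0))^2\bigl[4\theta(\beta,\gamma)+b(\gamma,\beta,\gamma)\bigr]$, and Proposition~\ref{prop: b-function}(g) then gives $4\theta(\beta,\gamma)+b(\gamma,\beta,\gamma)\ge2(\beta+\gamma)$. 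The decisive point is that $\beta+\gamma$ must be split into $\beta$ (attached to $x_0$) and $\gamma$ (attached to $\delta x_0$): since $(\nabla_\delta f(x_0))^2=(\nabla_\delta f(\delta x_0))^2$ and $\Delta_\delta f=c(\delta)\nabla_\delta f$ on $S_2$, this produces $B(\delta,\mathcal O)\ge\pi_0\bigl[(\Delta_\delta f(x_0))^2\rho(x_0)+(\Delta_\delta f(\delta x_0))^2\rho(\delta x_0)\bigr]$, i.e.\ $c_\delta=1$.

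Next I would sum these bounds. The $\sim_\delta$-orbits partition $X$, and $\sum_{\delta\notin S_2}(\Delta_\delta f(x))^2=2\sum_{\delta\in S^+}(\Delta_\delta f(x))^2$ because $\Delta_{\delta^{-1}}f=\Delta_\delta f$; hence
\[
\mathcal B_\rho(f)\ \ge\ \pi_0\sum_{x\in X}\rho(x)\Bigl[\tfrac1{2C}\sum_{\delta\in S^+}(\Delta_\delta f(x))^2+\sum_{\delta\in S_2}(\Delta_\delta f(x))^2\Bigr].
\]
Applying Cauchy--Schwarz at each $x$ to $\Delta f(x)=\sum_{\delta\in S^+}\Delta_\delta f(x)+\sum_{\delta\in S_2}\Delta_\delta f(x)$ with weights $2C$ on $S^+$ and $1$ on $S_2$ yields
\[
(\Delta f(x))^2\le\bigl(2C|S^+|+|S_2|\bigr)\Bigl(\tfrac1{2C}\sum_{\delta\in S^+}(\Delta_\delta f(x))^2+\sum_{\delta\in S_2}(\Delta_\delta f(x))^2\Bigr),
\]
and $2C|S^+|+|S_2|=C|S\setminus S_2|+|S_2|=N'$. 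Combining the two displays gives $\mathcal B_\rho(f)\ge\tfrac{\pi_0}{N'}\sum_x\rho(x)(\Delta f(x))^2=\tfrac1{N'}\langle\rho,(\Delta f)^2\rangle_\pi$, i.e.\ $CD_{ent}(0,N')$, and a fortiori $CD_{ent}(0,N)$.

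I expect the main obstacle to be the bookkeeping around order-$2$ generators: getting the full weight $c_\delta=1$ on the $|S_2|$-part — which is what turns the naive dimension $0.93|S|+1.07|S_2|$ into the claimed $0.93|S|+0.07|S_2|$ — requires handling each two-element orbit as a whole and distributing its $(\beta+\gamma)$-mass to the two vertices asymmetrically, rather than estimating vertex by vertex. A minor technical point is the degenerate case $b(\alpha,\beta,\gamma)=b(\gamma,\beta,\alpha)=0$, occurring exactly when $\alpha\gamma=\beta^2$, where the quotient in Proposition~\ref{prop: b-function}(f) is read as $0$ and one checks directly that $\theta(\alpha,\beta)+\theta(\beta,\gamma)=\tfrac{2\sinh s}{s}\,\beta\ge2\beta\ge\tfrac1C\beta$ (with $\alpha/\beta=e^{s}$).
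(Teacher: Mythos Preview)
Your proof is correct and follows essentially the same approach as the paper: the same termwise estimate via $p^2S+q^2T\ge(p+q)^2ST/(S+T)$ and Proposition~\ref{prop: b-function}(f) for $\delta\notin S_2$, the same use of Proposition~\ref{prop: b-function}(g) for $\delta\in S_2$, and the same weighted Cauchy--Schwarz to assemble $(\Delta f)^2$. The only difference is organizational --- you package the argument through the orbit decomposition $B(\delta,\mathcal O)$ from Theorem~\ref{thm:abellowbdinfdim}, whereas the paper works directly with the $T_1+T_3$, $T_4$ estimates \eqref{eq:T1T3}, \eqref{eq:T4} --- and your explicit treatment of the degenerate case $\alpha\gamma=\beta^2$ is a nice addition.
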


\begin{remark} \label{rem:comBEent}
    The proof of Theorem \ref{thm:abellowbdfindim} can be adapted to Bakry-\'Emery curvature with the only difference being $N=|S|$ (that is, using $b \equiv 0$ in the case $\theta= \theta_a$ in the proof). This means an abelian Cayley graph with generating set $S$ always satisfies the Bakry-\'Emery curvature condition $CD_a(0,|S|)$. In the special case that all reduced relations of the generators except for the commutator relations have length $\ge 5$, this fact is also confirmed by Theorem 9.1 in \cite{CLP-19}. Furthermore, abelian Cayley graphs are always Ricci flat graphs but not vice versa (see \cite{LL-24} for more details). Ricci flat graphs were introduced by \cite{CY-96} and they are $d$-regular graphs satisfying also $CD_a(0,d)$, as mentioned in \cite{munch2018li}. Moreover, it was shown in \cite{bauer2015li} that Ricci flat graphs satisfy $CDE(0,d)$.  
\end{remark}

\begin{remark}
    In the case when an abelian Cayley graph has only generators of order $2$ (i.e., $S=S_2$), it satisfies the entropic curvature condition $CD_{ent}(0,|S|)$. This dimension $|S|$ is sharp in the case of, e.g., the simple random walk on the hypercube $(\mathbb{Z}_2)^d$ (check with $\rho\equiv 1$ and $f(x)\in\{0,1\}$ according to the bipartiteness). In this case we have both $CD_{ent}(0,d)$ and $CD_a(0,d)$.

    In the case when an abelian Cayley graph has some generator of higher order, it satisfies a stronger curvature-dimension condition, namely, $CD_{ent}(0,N)$ with $N=0.93|S|+0.07|S_2|<|S|$.
\end{remark}


\begin{proof}
Recall that $(X,Q,\pi)$ is $CD_{ent}(0,N)$ if 
$$ \mathcal{B}_\rho(f) \ge \frac{1}{N}  \langle
\rho, (\Delta f)^2 \rangle_{\pi}, $$
holds true for all $f$ and $\rho$. We are now using the estimates derived at the end of Subsection \ref{subsec:specmap}. We have $\mathcal{B}_\rho(f) = (T_1 + T_3) + T_4$
with (see \eqref{eq:T1T3})
$$ T_1 + T_3 \ge \frac{\pi_0}{4} \sum_{x} \left( \sum_{\delta\not\in S_2}(\Delta_\delta f(x))^2 (\widehat \rho(\delta^{-1}x,x) + \widehat\rho(x,\delta x))+
    4\sum_{\delta\in S_2}(\Delta_\delta f(x))^2 \widehat\rho(x,\delta x) \right).
$$
Regarding $T_4$, we use \eqref{eq:T4} and the inequality
$$ c_1 x_1^2 + c_2 x_2^2 \ge \frac{c_1c_2}{c_1+c_2}(x_1+x_2)^2, $$
and obtain
\begin{align*}
  T_4 &\ge \frac{\pi_0}{4} \sum_{x}\sum_{\delta\not\in S_2} c(\delta)^2 \bigg(
  (f(\delta x) - f(x))^2 b_\rho(\delta^{-1}x,x,\delta x) + (f(\delta^{-1} x) - f(x))^2 b_\rho(\delta x,x,\delta^{-1}x)\bigg)\\
  &\phantom{\ge}+\frac{\pi_0}{4} \sum_{x}\sum_{\delta\in S_2} c(\delta)^2(f(\delta x)-f(x))^2 b_\rho(\delta x,x, \delta x)\\
  &\ge \frac{\pi_0}{4} \sum_{x}\left(\sum_{\delta\not\in S_2} (\Delta_\delta f(x))^2 \frac{b_\rho(\delta^{-1}x,x,\delta x) \cdot b_\rho(\delta x,x,\delta^{-1}x)}{b_\rho(\delta^{-1}x,x,\delta x)+b_\rho(\delta x,x,\delta^{-1}x)} +\sum_{\delta\in S_2} (\Delta_\delta f(x))^2 b_\rho(\delta x,x, \delta x)\right).
\end{align*}

Bringing everything together, applying Proposition \ref{prop: b-function}(f) and (g), and then applying Cauchy-Schwarz, we finally obtain
\begin{align*}
\mathcal{B}_\rho(f) &= T_1 + T_3 + T_4 \\
&\ge \frac{\pi_0}{4} \sum_{x}\left(\sum_{\delta\not\in S_2} (\Delta_\delta f(x))^2 \cdot \frac{1}{C}\rho(x) +\sum_{\delta\in S_2} (\Delta_\delta f(x))^2 \cdot 2(\rho(x)+\rho(\delta x))\right)\\
&= \frac{\pi_0}{4} \sum_x \rho(x) \left( \sum_{\delta\not\in S_2} \frac{1}{C}(\Delta_\delta f(x))^2+\sum_{\delta\in S_2} 4(\Delta_\delta f(x))^2 \right) \\
&\ge \frac{\pi_0}{4} \sum_x \rho(x)\frac{\left( \sum_{\delta\not\in S_2} \Delta_\delta f(x) +2 \sum_{\delta\in S_2} \Delta_\delta f(x)\right)^2}{C|S-S_2|+|S_2|}\\
&= \frac{1}{C|S|+(1-C)|S_2|} \langle
\rho, (\Delta f)^2 \rangle_{\pi},
\end{align*}
for $C=0.93$. Consequently, we can conclude that $(X,Q,\pi)$ satisfies $CD_{ent}(0,N)$ with 
$$ N\le 0.93|S|+0.07|S_2|. $$
\end{proof}

\subsection{(Asymptotically) sharp upper curvature bound for cycles} \label{subsec: uppbd-cycle}

Let $C_n$ be the cycle of length $n$, equipped with the simple random walk $Q(x,y) = \frac{1}{2}$ for adjacent vertices $x,y$.
In this subsection, we improve the trivial upper bound $K_{ent} \leq \lambda_1 = 2\sin^2\left(\frac{\pi}{n}\right) \le \frac{2\pi^2}{n^2}$, derived from the Lichnerowicz inequality (see, e.g., \cite[Theorem 1.3]{KLMP-23}) to $K\lesssim n^{-4} \log(n)^2$.

\begin{theorem} \label{thm:circleentbd}
    Let $X$ be a cycle graph on $4n$ vertices with $Q(x,y)=q \le \frac{1}{2}$ for neighbors $x\sim y$. Assume $n\geq 4$. Then
    \[
    K_{ent} \leq C qn^{-4}(\log n)^2,
    \]
    where $C=5000$.
\end{theorem}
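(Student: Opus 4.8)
The plan is to exhibit an explicit test pair $(\rho, f)$ on the cycle $C_{4n}$ for which the ratio $\mathcal{B}_\rho(f)/\mathcal{A}_\rho(f)$ is small, of order $qn^{-4}(\log n)^2$, and then to conclude $K_{ent}(C_{4n}) \le \mathcal{B}_\rho(f)/\mathcal{A}_\rho(f)$ from the definition \eqref{eq:Ktheta}. The guiding heuristic is that the lower bound in Theorem \ref{thm:abellowbdinfdim} was driven by the quantity $\sum (\Delta_\delta \log\rho)^2$ (see \eqref{eq:BdelOlower}), which is large only when $\log\rho$ oscillates; so to make curvature small I want $\rho$ to vary as slowly as possible while still being forced to vary (because on a cycle $\rho$ must be periodic and nonconstant if it is to interact with $f$). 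The natural guess is to take $\log\rho$ essentially \emph{linear} on each of two arcs of length $2n$ — a "sawtooth" — with a small slope of order $n^{-1}$, so that $\Delta_\delta \log\rho$ vanishes except at the two turning points, contributing only $O(n^{-2})$ there. Symmetrically, $f$ should be chosen so that $\nabla f$ is roughly constant of a fixed size on the arcs, maximizing $\mathcal{A}_\rho(f)$, while $\Delta f$ and the second differences of $f$ that appear in the $T_1+T_3$ term are as small as possible; a natural choice is $f$ itself sawtooth-like (piecewise linear, periodic), or perhaps a smoothed triangle wave.

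Concretely, I would first write out $\mathcal{A}_\rho(f)$ and $\mathcal{B}_\rho(f)$ for the cycle using the mapping-representation formulas recalled at the end of Subsection \ref{subsec:specmap}: here there is a single generator $\delta$ (a rotation by one step) of order $4n$, not of order $2$, with $c(\delta) = q$, $\pi_0 = 1/(4n)$, and the whole cycle is a single $\delta$-orbit. So $\mathcal{A}_\rho(f) = \tfrac{q}{8n}\sum_x \widehat\rho(x,\delta x)(\nabla_\delta f(x))^2$, and $\mathcal{B}_\rho(f) = (T_1+T_3) + T_4$ with $T_1+T_3$ controlled by $\sum_x (\Delta_\delta f(x))^2(\widehat\rho(x,\delta x)+\widehat\rho(x,\delta^{-1}x))$ and $T_4$ controlled via Proposition \ref{prop: b-function}(e) by $\sum_x \widehat\rho \,(\Delta_\delta\log\rho(x))^2 (\nabla_\delta f(x))^2$ (up to constants and up to the requirement $\alpha/\beta,\gamma/\beta\in[1/256,256]$, which forces the per-step multiplicative change in $\rho$ to stay bounded — easily arranged since the slope is $O(n^{-1})$). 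The second step is to estimate each piece for the explicit sawtooth $(\rho,f)$: $\mathcal{A}_\rho(f) \gtrsim q \cdot (\text{const})$ since $\rho \asymp 1$ and $|\nabla_\delta f| \asymp$ const on a positive fraction of vertices; $T_1+T_3 \lesssim q \cdot (\text{second-difference defects of } f)$, which I want to be $O(n^{-4})$ — this is where choosing $f$ to be a genuinely smoothed triangle wave (so its discrete second difference is $O(n^{-2})$ pointwise but supported on $O(n)$ vertices, or $O(n^{-1})$ at $O(1)$ corners) matters; and $T_4 \lesssim q \cdot (\text{const}) \cdot \max_x(\Delta_\delta\log\rho)^2 \lesssim q \cdot n^{-4}$, or more precisely $q n^{-4}(\log n)^2$ if I must take the slope of $\log\rho$ to be of order $n^{-1}\log n$ rather than $n^{-1}$ to satisfy some constraint — and I expect the $(\log n)^2$ in the final bound to enter exactly here.

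The main obstacle, and the delicate point deciding whether the factor is $n^{-4}$ or $n^{-4}(\log n)^2$, is the interplay between $T_1 + T_3$ and the normalization $\mathcal{A}_\rho(f)$. The term $T_1 + T_3$ involves $(\Delta_\delta f)^2$ summed against $\widehat\rho$, and $\Delta_\delta f(x) = f(\delta x) + f(\delta^{-1}x) - 2f(x)$ is the discrete second difference of $f$; to make this genuinely $O(n^{-4})$ relative to $\mathcal{A}_\rho(f) \asymp q$, the "corners" of the triangle wave $f$ must be spread out over a window of length $\asymp n$, turning the curvature of $f$ down to $O(n^{-2})$ per vertex over $O(n)$ vertices, i.e.\ $\sum(\Delta_\delta f)^2 = O(n \cdot n^{-4}) = O(n^{-3})$, hence $T_1+T_3 = O(q n^{-3}/n) = O(qn^{-4})$ after the $\pi_0 = 1/(4n)$ factor — consistent with the claimed bound. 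But a smoothed $f$ has larger $\Delta f$ support which could inflate $T_1+T_3$; conversely a sharp $f$ has $\Delta_\delta f$ of size $O(1)$ at the corners, giving $T_1+T_3 = O(q/n)$, far too big. So one must optimize the smoothing scale of $f$ against that of $\rho$. I would present a clean one-parameter family (smoothing width $\asymp n$ for both, slopes $\asymp n^{-1}$ with a logarithmic correction chosen so all the $b$-function estimates apply in their valid range), verify the three estimates $\mathcal{A}_\rho(f) \gtrsim q$, $T_1+T_3 \lesssim q n^{-4}(\log n)^2$, $T_4 \lesssim q n^{-4}(\log n)^2$, and then read off $K_{ent} \le C q n^{-4}(\log n)^2$ with $C = 5000$, the explicit constant coming from tracking the numerical constants in Proposition \ref{prop: b-function}(e) together with the explicit slopes; I would double-check the constant against the case $n = 4$ where the statement is first claimed to hold.
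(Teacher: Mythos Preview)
Your overall framework is right: pick test functions $(\rho,f)$, decompose $\mathcal{B}_\rho(f)=(T_1+T_3)+T_4$, bound $T_4$ via Proposition~\ref{prop: b-function}(e), and read off $K_{ent}\le\mathcal{B}_\rho(f)/\mathcal{A}_\rho(f)$. But the specific choices you propose---sawtooth $\log\rho$ and a smoothed triangle wave $f$---cannot reach $n^{-4}$, and there are two concrete arithmetic/structural errors.

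First, smoothing $f$ over a window of width $w\asymp n$ with slopes $\pm 1$ gives discrete second differences $f(\delta x)+f(\delta^{-1}x)-2f(x)\asymp 2/w = O(n^{-1})$, not $O(n^{-2})$ as you write; hence $\sum_x(\Delta_\delta f)^2\asymp q^2/w = O(q^2 n^{-1})$, and if $\rho\asymp 1$ this yields $(T_1+T_3)/\mathcal{A}_\rho(f)\gtrsim q/n^2$, two powers short of the target. No smoothing scale can do better, by scale invariance in $f$.

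Second, a sawtooth $\log\rho$ with slope $s$ has $\Delta_\delta\log\rho$ of size $\asymp 2s$ concentrated at two turning points, one of which sits at the \emph{maximum} of $\rho$. That single point then contributes $\asymp\pi_0 q^2\rho_{\max}s^2$ to $T_4$, and since $\mathcal{A}_\rho(f)\asymp q\pi_0\sum_x\rho(x)$ with $\sum_x\rho(x)\lesssim\rho_{\max}\cdot n/s$ for reasonable slopes, the ratio $T_4/\mathcal{A}_\rho(f)$ is at best $\gtrsim q s^3/n$; optimizing over $s$ never drops this below $q n^{-3}$ up to logarithms.

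The paper's test pair avoids both issues simultaneously by taking $\rho(x)=\exp(-4(x/n)^2\log n)$ (a discrete Gaussian, so $\log\rho$ is \emph{quadratic}) and keeping $f$ a \emph{sharp} triangle wave with corners at $x=\pm n$. The quadratic $\log\rho$ makes $\log(\rho(x-1)\rho(x+1)/\rho(x)^2)$ identically equal to $-8(\log n)/n^2$ (except at the single wrap-around vertex), so $T_4\lesssim q^2\pi_0\,n^{-4}(\log n)^2\sum_x\rho(x)$---this is where the $(\log n)^2$ genuinely enters. Meanwhile $\rho(\pm n)\asymp n^{-4}$ is tiny, so the sharp corners of $f$ (where $|\Delta f|=2q$) sit where $\widehat\rho$ is of order $n^{-4}$, giving $T_1+T_3\lesssim q^2\pi_0 n^{-4}$ with no smoothing needed. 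Both terms are then $\lesssim q^2\pi_0 n^{-4}(\log n)^2\sum_x\rho(x)$, and dividing by $\mathcal{A}_\rho(f)\gtrsim q\pi_0\sum_x\rho(x)$ gives the bound. The key idea you are missing is: do not smooth $f$; instead make $\rho$ vanish (to order $n^{-4}$) at the corners of $f$, and achieve this with a $\log\rho$ that is curved \emph{uniformly} rather than only at two points.
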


\begin{proof}
    For simplicity, we drop the restriction of $\rho$ to be a probability measure (note that our considerations are invariant under rescaling, which allows this generalization). 
    Let the vertices of the cycle $X=\mathbb{Z}/(4n\mathbb{Z})=\{-2n+1,\ldots,2n\}$. We construct a function $f\in \mathbb{R}^X$ as
    \begin{align*}
        f(x) &:= \begin{cases} k & \text{if $-n \le x \le n$}\,, \\
    2n-x & \text{if $n \le x \le 2n$}\,, \\ -2n-x & \text{if $-2n+1 \le x \le -n$}\,, \end{cases}
    \end{align*} 
    and a measure $\rho\in \mathcal{P}_*(X)$ as
    \[
    \rho(x):=\exp(-4(x/n)^2 \cdot \log(n)) \quad \text{for all $-2n <x \le 2n$}\,.
    \]
    
    Given that $n\ge 4$, we make the following three observations:
    \begin{enumerate}[(i)]
        \item $\rho(n-1) \le 16n^{-4}$. 
        \item $\dfrac{\rho(x)}{\rho(x+1)} \in [\frac{1}{256},256]$. 
        \item $\log\left(\frac{\rho(x-1) \rho(x+1)}{\rho(x)^2}\right)=\frac{-8\log(n)}{n^2}$ for all $x\not=2n$, and \\
        $\log\left(\frac{\rho(2n-1) \rho(2n+1)}{\rho(2n)^2}\right) =\frac{4\log(n)(8n-2)}{n^2}\le 11$.
    \end{enumerate}

Now recall from \eqref{eq:Brho-EM}, \eqref{eq:T1T3-EM} and \eqref{eq:T4-EM} that
\begin{align*}
    \mathcal B_\rho(f) &= T_1+T_3+T_4 , \\
    T_1+T_3 &=\frac{\pi_0}{2} \sum_{x} 
    (\Delta f(x))^2(\widehat\rho(x,\delta x) + \widehat\rho(x,\delta^{-1} x)) ,\\
    T_4 &= \frac{\pi_0q^2}{2}\sum_{x}
  (\nabla_\delta f(x))^2 b_\rho(\delta^{-1}x,x,\delta x) + (\nabla_{\delta^{-1}}f(x))^2 b_\rho(\delta x,x,\delta^{-1}x) ,
\end{align*}
where $\delta x = x+1$ and $\delta^{-1} x = x-1$.

For the estimate of $T_1+T_3$, we notice that $|\Delta f(\pm n)|=2q$ and $\Delta f(x) = 0$ for $x \notin \{\pm n\}$. Thus
\begin{align*}
T_1+T_3 &= \frac{\pi_0(2q)^2}{2} \Big(\widehat \rho(-n,-n-1) + \widehat \rho(-n,-n+1) +
\widehat \rho(n,n-1) + \widehat \rho(n,n+1) \Big) \\
&\leq 8\pi_0q^2 \rho(n-1) \leq 128\pi_0q^2n^{-4} .
\end{align*}
For $T_4$, we notice that $|\nabla_\delta f(x)|=|\nabla_{\delta^{-1}} f(x)|=1$ and
that $\frac{\rho(x)}{\rho(x+1)} \in [\frac{1}{256},256]$ for all vertices $x\in X$. Hence, applying Proposition \ref{prop: b-function}(e), we have
\begin{align*}
    T_4 &\le \frac{\pi_0q^2}{2}\sum_{x} 34 
  \rho(x)\left(\log\left(\frac{\rho(x-1) \rho(x+1)}{\rho(x)^2} \right) \right)^2 \\
  &\le \frac{\pi_0q^2}{2}\cdot 34  \left( 64 n^{-4}(\log n)^2 \sum_{x \neq 2n} \rho(x) + 121 \rho(2n) \right) \\
  &\le 1089\pi_0q^2n^{-4}(\log n)^2 \sum_x \rho(x).
\end{align*}
Combining the above two estimates, we obtain
\begin{align*}
    \mathcal B_\rho(f) = T_1+T_3+T_4 \le 1217\pi_0q^2n^{-4}(\log n)^2\sum_x \rho(x).
\end{align*}


On the other hand, using the formula \eqref{eq:Gammarho}, we have for all $x$,
\[
\Gamma_\rho f (x) =  q\left(\partial_1\theta(\rho(x),\rho(x+1)) + \partial_1\theta(\rho(x),\rho(x-1)) \right) \geq 2q\partial_1\theta(256,1) \geq 0.29q,
\]
and therefore, 
\[
\mathcal A_\rho(f) = \langle \rho,\Gamma_\rho f \rangle \geq 0.29\pi_0q \sum_x \rho(x).
\]
Combining with the estimate for $\mathcal B_\rho(f)$, we obtain
\begin{align*}
    K_{ent}(X) \leq \frac{{\mathcal B}_\rho(f)}{{\mathcal A}_\rho(f)} \leq 5000q n^{-4}(\log n)^2.
\end{align*}
This finishes the proof of the theorem.
\end{proof}

\subsection{Abelian Cayley graphs and Ricci flatness}

Abelian Cayley graphs belong to the class of Ricci flat graphs, which were introduced in \cite{CY-96}. The following definition presents special versions of this notion:

\begin{definition}[see {\cite[Def. 3.1]{cushing2021curvatures}}] \label{def:ricciflat}
Let $G=(V,E)$ be a $d$-regular graph. A vertex $x \in V$ is \emph{Ricci flat} if there exist maps $\eta_j: B_1(x) \to V$, $1 \le j \le d$, with the following properties:
\begin{itemize}
\item[(i)] $\eta_j(y) \sim y$ for all $y \in B_1(x)$,
\item[(ii)] $\eta_i(y)\neq \eta_j(y)$ for all $y \in B_1(x)$ and $i \neq j$,
\item[(iii)] $\bigcup_{j} \eta_j(\eta_i(x)) = \bigcup_{j} \eta_i(\eta_j(x))$ for all $i$.
\end{itemize}
The additional properties
\begin{align*}
&(R)& \quad \eta_i^2(x) &= x \quad \text{for all $i$,} \\
&(S)& \quad \eta_j(\eta_i(x)) &= \eta_i(\eta_j(x)) \quad \text{for all $i,j$,}
\end{align*}
are called \emph{reflexivity} and \emph{symmetry}, respectively. 

If there exists a family of maps $\eta_j$ for a given vertex $x \in V$ satisfying (R) or (S) in addition to (i)--(iii), we say that $x$ is $(R)$-Ricci flat or $(S)$-Ricci flat, respectively. If there exists a family of maps $\eta_j$ satisfying (i)--(iii) and (R) and (S) simultaneously, we say that $x$ is $(RS)$-Ricci flat. 
\end{definition}

The (S)-Ricci flatness definition is similar in spirit to the special mapping representation considered in Theorem \ref{thm:EMmaprep}
with the main difference that Ricci flatness is a local property. It is known that Ricci flatness implies both non-negative Ollivier Ricci curvature and non-negative Bakry-\'Emery curvature (see, e.g., \cite{cushing2021curvatures} and the references therein).
It is not difficult to see that Ricci flatness also implies non-negative Ollivier sectional curvature (see Definition \ref{def:OllRicsec} below).

It would be interesting to investigate whether Ricci flatness also implies non-negative entropic curvature. Let us discuss some potential naive strategies to tackle this question.

Note that all abelian Cayley graphs are (S)-Ricci flat, but not vice versa. Examples of such graphs can be found in \cite{LL-24}. Optimistic readers may hope that every Ricci flat graph has an abelian Cayley graph as a covering. A sujective graph homomorphism $\pi: \tilde G \to G$ is called a covering if it is locally bijective, that is 
one-balls $B_1(x)$ of $\tilde G$ are bijectively mapped onto one-balls $B_1(\pi(x))$ of $G$. 
A counterexample for the above hope is the graph in Figure \ref{fig:C4C5comp} (the complement of $C_4 \dot\cup C_5$) together with Lemma \ref{lem:liftRicci}. Since this graph is highly connected, we suspect that it has positive entropic curvature.

\begin{figure}[h!]
\begin{center}
\includegraphics[width=8cm]{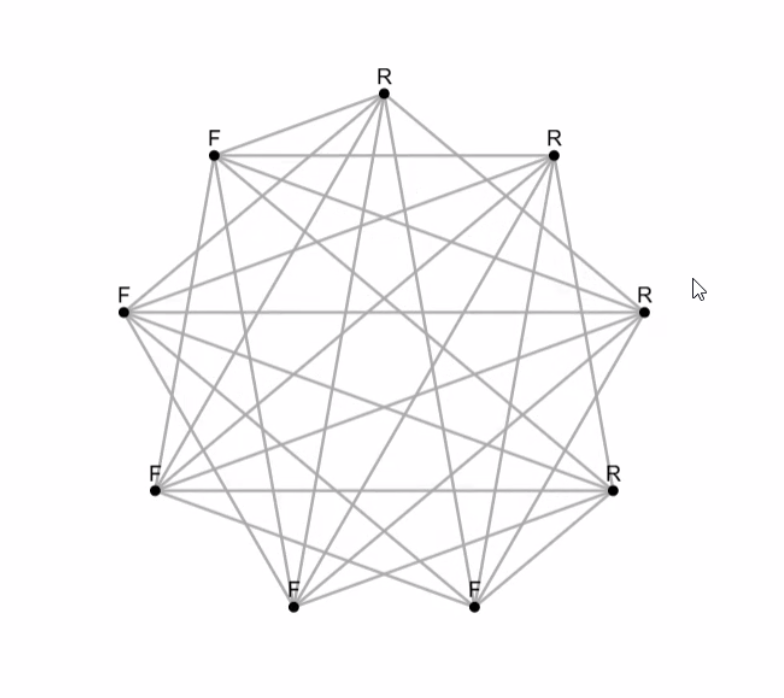}
\end{center}
\caption{A Ricci flat graph which is not (S)-Ricci flat, and therefore not an abelian Cayley graph (vertices labelled ``F'' and ``R'' are only Ricci-flat and only (R)-Ricci flat, respectively). The picture was generated via the Graph Curvature Calculator \cite{CKLLS-22}.}
\label{fig:C4C5comp}
\end{figure}

\begin{lemma}
\label{lem:liftRicci} Let $\pi: \tilde G \to G$ be a covering. Let $x$ be a vertex in $G$ and $\tilde x \in \pi^{-1}(\{x\}) \in \tilde G$. If $\tilde x$ is (S)-Ricci flat in $\tilde G$, then $x$ is also (S)-Ricci flat in $G$. 
\end{lemma}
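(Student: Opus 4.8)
The plan is to lift the Ricci-flat structure at $x$ to the covering, use the hypothesis that $\tilde x$ is (S)-Ricci flat, and then push the resulting maps back down to $G$ via the covering projection. Write $d$ for the common degree of $G$ and $\tilde G$ (a covering is locally bijective, so degrees agree). Since $\pi$ restricts to a bijection $B_1(\tilde x) \to B_1(x)$, I would first transport the standard abelian-Cayley-type maps (or just any Ricci-flat maps; in fact we only need that $x$ itself has \emph{some} Ricci-flat structure to begin with) — but more directly: the cleanest route is to take the (S)-Ricci-flat maps $\tilde\eta_j: B_1(\tilde x) \to \tilde G$ guaranteed by the hypothesis and define $\eta_j: B_1(x) \to G$ by the recipe $\eta_j := \pi \circ \tilde\eta_j \circ (\pi|_{B_1(\tilde x)})^{-1}$. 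The key point making this well defined is that $\pi|_{B_1(\tilde x)}$ is a bijection onto $B_1(x)$, and that $\tilde\eta_j(\tilde y) \sim \tilde y$ forces $\tilde\eta_j(\tilde y) \in B_1(\tilde x)$ only when $\tilde y = \tilde x$; for $\tilde y$ a neighbour of $\tilde x$, $\tilde\eta_j(\tilde y)$ may leave $B_1(\tilde x)$, but it still lies in $B_2(\tilde x)$ and $\pi$ is still injective on the relevant two-ball... so I will need to be slightly careful about \emph{where} $\pi$ is injective.

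The main technical step is therefore to control injectivity of $\pi$ on a large enough neighbourhood. Conditions (i)--(iii) and (S) at $\tilde x$ only involve vertices of the form $\tilde\eta_j(\tilde\eta_i(\tilde x))$, which lie in $B_2(\tilde x)$, together with their adjacencies. So I would argue: since $\pi$ is a covering, it is injective on each one-ball; I want to descend the identities $\tilde\eta_j(\tilde y)\sim\tilde y$, $\tilde\eta_i(\tilde y)\neq\tilde\eta_j(\tilde y)$, and the symmetry identity $\tilde\eta_j(\tilde\eta_i(\tilde x)) = \tilde\eta_i(\tilde\eta_j(\tilde x))$. The adjacency and the inequality relations only compare vertices inside a single one-ball $B_1(\tilde y)$ (for $\tilde y \in B_1(\tilde x)$), where $\pi$ is bijective, so they transfer verbatim. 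The symmetry identity is an equality of vertices in $B_2(\tilde x)$ — but it is an equality, so once I define $\eta_j$ downstairs by applying $\pi$ I get $\eta_j(\eta_i(x)) = \pi(\tilde\eta_j(\tilde\eta_i(\tilde x))) = \pi(\tilde\eta_i(\tilde\eta_j(\tilde x))) = \eta_i(\eta_j(x))$ directly, no injectivity needed. Condition (iii) (the union condition) is a consequence of (S) once we know symmetry, so it comes for free; alternatively it descends by the same argument since it too is an equality of \emph{sets} of vertices.

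Concretely the steps are: (1) set $\phi := \pi|_{B_1(\tilde x)}: B_1(\tilde x) \to B_1(x)$, a bijection; (2) for $y \in B_1(x)$ and $1\le j \le d$ put $\eta_j(y) := \pi(\tilde\eta_j(\phi^{-1}(y)))$; (3) check (i): $\tilde\eta_j(\phi^{-1}(y)) \sim \phi^{-1}(y)$ in $\tilde G$, and $\pi$ maps edges to edges, so $\eta_j(y) \sim y$; (4) check (ii): if $\eta_i(y) = \eta_j(y)$ then $\pi(\tilde\eta_i(\phi^{-1}(y))) = \pi(\tilde\eta_j(\phi^{-1}(y)))$, and since both $\tilde\eta_i(\phi^{-1}(y))$ and $\tilde\eta_j(\phi^{-1}(y))$ lie in the one-ball $B_1(\phi^{-1}(y))$ on which $\pi$ is injective, we get $\tilde\eta_i(\phi^{-1}(y)) = \tilde\eta_j(\phi^{-1}(y))$, contradicting (ii) upstairs; (5) check (S): by definition $\eta_j(\eta_i(x)) = \eta_j(\pi(\tilde\eta_i(\tilde x)))$, and here one must note $\pi(\tilde\eta_i(\tilde x)) \in B_1(x)$ with $\phi^{-1}(\pi(\tilde\eta_i(\tilde x))) = \tilde\eta_i(\tilde x)$ since $\tilde\eta_i(\tilde x) \in B_1(\tilde x)$ and $\phi$ is bijective there — so $\eta_j(\eta_i(x)) = \pi(\tilde\eta_j(\tilde\eta_i(\tilde x)))$, and the upstairs symmetry $\tilde\eta_j(\tilde\eta_i(\tilde x)) = \tilde\eta_i(\tilde\eta_j(\tilde x))$ gives $\eta_j(\eta_i(x)) = \eta_i(\eta_j(x))$; (6) (iii) follows from (S) (it is the set-version of the symmetry identity).

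The step I expect to be the actual obstacle — or at least the one requiring care — is step (5), namely making sure that $\phi^{-1}(\pi(\tilde\eta_i(\tilde x)))$ is really $\tilde\eta_i(\tilde x)$ and not some other preimage: this is exactly where we use that $\tilde\eta_i(\tilde x) \in B_1(\tilde x)$, so it lies in the domain of $\phi$, and $\phi$ being a genuine bijection onto $B_1(x)$ pins it down. Everything else is bookkeeping with the local bijectivity of the covering. I would present the argument in this order, emphasizing at the start that all the structure conditions in Definition \ref{def:ricciflat} refer only to vertices in $B_2(\tilde x)$ and to adjacencies within one-balls, which is precisely the regime where a covering is well behaved.
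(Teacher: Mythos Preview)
Your proposal is correct and follows essentially the same argument as the paper: define $\eta_j = \pi \circ \tilde\eta_j \circ (\pi|_{B_1(\tilde x)})^{-1}$, verify (i) via $\pi$ preserving edges, (ii) via injectivity of $\pi$ on $B_1(\phi^{-1}(y))$, and (S) via the identity $\phi^{-1}\circ\pi = \mathrm{id}$ on $B_1(\tilde x)$, from which (iii) follows. The paper's write-up is more compressed but the steps and the key observation you flagged in step (5) are exactly those in the original.
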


\begin{proof}
Let $\tilde G = (\tilde V, \tilde E)$ and $G = (V,E)$. Assume that $\tilde x \in \tilde V$ is $(S)$-Ricci flat. Then $\pi\vert_{B_1(\tilde x)}: B_1(\tilde x) \to B_1(x)$ is bijective with inverse $\rho: B_1(x) \to B_1(\tilde x)$. Let $\tilde \eta_j: B_1(\tilde x) \to \tilde V$ be as in Definition \ref{def:ricciflat} and 
$$ \eta_j = \pi \circ \tilde \eta_j \circ \rho: B_1(x) \to V. $$

Since both $\rho$ and $\pi$ are graph homomorphisms and $\tilde \eta_j(\tilde y) \sim \tilde y$ for all $\tilde y \in B_1(\tilde x)$, we have $\eta_j(y) \sim y$ for all $y \in B_1(x)$. This shows (i).

Property (ii) follows from the local bijectivity of $\pi$ around $\rho(y)$.

Property (S) (and therefore also (iii)) follows via the following identities:

\begin{align*} 
\eta_j(\eta_i(x)) &= \pi \circ \tilde \eta_j \circ (\rho \circ \pi) \circ \tilde \eta_i \circ \rho(x) \\
&= \pi \circ \tilde \eta_j \circ \tilde \eta_i \circ \rho(x) \\
&= \pi \circ \tilde \eta_i \circ \tilde \eta_j \circ \rho(x) \\
&= \eta_i(\eta_j(x)).
\end{align*}
\end{proof}

\section{Universal lower curvature bounds and perturbation} \label{sec:universal_bd_perturbed}

In this section, we derive explicit lower curvature bounds for general finite Markov chains and present perturbation results. It was shown in \cite[Theorem 4.1]{Mi-13} that the entropic curvature of any finite Markov chain is always greater than $- \infty$. Our explicit lower curvature bounds are given in Theorem \ref{thm:univgenlowbondinfdim} for infinite dimension and Theorem \ref{thm:univgenlowbondfindim} for finite dimension. While the second result is based on an inequality for the logarithmic mean (Lemma \ref{lem:ineqforgenlowbd}) and holds therefore only for entropic curvature, the first one is a lower curvature bound for arbitrary means. These explicit results inspired also the derivation of a lower bound for the intertwining curvature given in \cite{MWZ-24}. The perturbation results are concerned with comparisons of two Markov chains with closeby parameters and closeby means: Theorem \ref{thm:perturb1} compares so-called \emph{$\delta$-curvatures} of two Markov chains with the same graph structure and bounds on the $\log$-differences of their steady states, transition probabilities and means. Theorem \ref{thm:perturb2} assumes that both Markov chains are paired with the same mean $\theta$ and provides a bound on the difference between the \emph{$\theta$-curvatures} of two weighted graphs with the same topology and bounds on the $\log$-differences of the vertex measures and edge weights. All results are given in terms of the minimum $Q_{\min}$ of the positive transition probabilities of the involved Markov chains. 


\subsection{Lower curvature bound for infinite dimension}
\label{subsec:lowcurvbdinfdim}

We start with a lemma which will be also important for later perturbation considerations. In order to state the lemma, we first need some preparations.

For a given finite Markov chain $(X,Q,\pi)$, we define for any $x,y,z \in X$,
\begin{equation} \label{eq:qxyz}
q_{xyz} := Q(y,x)Q(y,z)\pi(y) \ge 0. 
\end{equation}
Then we can write $\mathcal{A}_\rho(f)$ and $\mathcal{A}_\rho(f,\Delta f)$ in the following way:
\begin{align}
    \mathcal{A}_\rho(f) &= \Vert \nabla f \Vert_\rho^2 = \frac{1}{2} \sum_{x,y} \widehat \rho_{xy}(f(y)-f(x))^2 Q(x,y) \pi(x) \nonumber \\
    &= \frac{1}{2} \sum_{x,y,z} \widehat \rho_{xy} Q(y,x) Q(y,z) \pi(y) (f(y)-f(x))^2 \nonumber \\ 
    &= \frac{1}{2} \sum_{x,y,z} q_{xyz} \widehat \rho_{xy} (f(y)-f(x))^2 \nonumber \\
    &= \frac{1}{4} \sum_{x,y,z} q_{xyz} \left( \widehat \rho_{xy} (f(y)-f(x))^2 + \widehat \rho_{zy}(f(y)-f(z))^2 \right)
    \label{eq:Arhoqxyz}
\end{align}
and
\begin{align}
    \mathcal{A}_\rho(f,\Delta f) &= \langle \nabla f,\nabla \Delta f \rangle_\rho = \frac{1}{2} \sum_{x,y} \widehat \rho_{xy}(f(y)-f(x))(\Delta f(y)-\Delta f(x)) Q(x,y) \pi(x) \nonumber \\
    &= -\sum_{x,y,z} q_{xyz} \widehat \rho_{xy} (f(y)-f(x))(f(y)-f(z)) \nonumber \\
    &= - \frac{1}{2} \sum_{x,y,z} q_{xyz} (\widehat \rho_{xy} + \widehat \rho_{zy}) (f(y)-f(x)) (f(y)-f(z)). \label{eq:ArhofDfqxyz}
\end{align}
For $x \in \mathbb{R}$, let $[x]^{\pm} \ge 0$ be the positive/negative part of $x$, respectively, and therefore, $|x| = [x]^+ + [x]^-$. In view of \eqref{eq:ArhofDfqxyz}, we define
\begin{equation} \label{eq:Arhopm}
[\mathcal{A}_\rho]^{\pm}(f,\Delta f) := \sum_{x,y,z} \left[ - \frac{1}{2} q_{xyz} (\widehat \rho_{xy} + \widehat \rho_{zy}) (f(y)-f(x)) (f(y)-f(z)) \right]^{\pm}. 
\end{equation}
In the following lemma, we use the notation
\begin{align} 
| \mathcal{A}_\rho |(f,\Delta f) &:= 
[\mathcal{A}_\rho]^+(f,\Delta f) + [\mathcal{A}_\rho]^-(f,\Delta f) \nonumber \\ &=
\frac{1}{2} \sum_{x,y,z} q_{xyz} (\widehat \rho_{xy} + \widehat \rho_{zy}) \cdot |f(y)-f(x)|\cdot |f(y)-f(z)|. \label{eq:absArhofDf}
\end{align}
Finally, we define $P = \Delta + \rm{Id}$, which can be written as
\begin{equation} \label{eq:Pdef} 
P \rho(x) = \sum_y Q(x,y) \rho(y). 
\end{equation}

\begin{lemma} \label{lem:univlowbduseful}
    Let $(X,Q,\pi)$ be an irreducible, reversible finite Markov chain, $Q_{\min} := \min_{x \sim y} Q(x,y)$, $\theta$ an arbitrary mean, and $C > 0$. Then we have for all $\rho \in \mathcal{P}_*(X)$ and all $f \in \mathbb{R}^X$,
    \begin{equation} \label{eq:infcurvlowbd}
      \frac{1}{2} \langle P \rho, \Gamma_\rho f\rangle_\pi + \frac{2C^2}{Q_{\min}}\mathcal{A}_\rho(f) \ge C |\mathcal{A}_\rho|(f,\Delta f)
    \end{equation}
    with $P \rho$ given in \eqref{eq:Pdef} and $|\mathcal{A}_\rho|(f,\Delta f)$ given in \eqref{eq:absArhofDf}.
\end{lemma}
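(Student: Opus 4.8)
The plan is to bound the cross term $|\mathcal{A}_\rho|(f,\Delta f)$ pointwise in the $(x,y,z)$-sum by a weighted combination of the corresponding terms appearing in $\langle P\rho,\Gamma_\rho f\rangle_\pi$ and in $\mathcal{A}_\rho(f)$, and then sum up. First I would rewrite all three quantities in the common $q_{xyz}$-bookkeeping introduced in \eqref{eq:Arhoqxyz}, \eqref{eq:ArhofDfqxyz} and \eqref{eq:absArhofDf}. For $\langle P\rho,\Gamma_\rho f\rangle_\pi$, using the definition of $\Gamma_\rho$ in \eqref{eq:Gammarho} together with $P\rho(x)=\sum_y Q(x,y)\rho(y)$ and $\pi(x)Q(x,y)=\pi(y)Q(y,x)$, one can expand $\langle P\rho,\Gamma_\rho f\rangle_\pi$ as a sum over $x,y,z$ with the factor $q_{xyz}$ and a weight built out of $\rho(z)$ and $\partial_1\theta(\rho_x,\rho_y)$; symmetrizing in $x\leftrightarrow z$ (the standard trick already used for $\mathcal{A}_\rho$ in \eqref{eq:Arhoqxyz}) then produces, up to positive factors, the combination $\rho(z)\,\partial_1\theta(\rho_x,\rho_y)(f(y)-f(x))^2 + \rho(x)\,\partial_1\theta(\rho_z,\rho_y)(f(y)-f(z))^2$. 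The point is that $\rho(z)\,\partial_1\theta(\rho_x,\rho_y)$ and $\rho(x)\,\partial_1\theta(\rho_z,\rho_y)$ are both bounded below by something comparable to $\widehat\rho_{xy}$ and $\widehat\rho_{zy}$ respectively, using \eqref{eq:rhoasderiv} (which gives $\widehat\rho_{xy} = \partial_1\theta(\rho_x,\rho_y)\rho_x + \partial_2\theta(\rho_x,\rho_y)\rho_y$) and the monotonicity/homogeneity of $\theta$.

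Next I would carry out the key pointwise estimate. Fix $x,y,z$ and abbreviate $a := |f(y)-f(x)|$, $c := |f(y)-f(z)|$, and let $u := \widehat\rho_{xy}$, $v := \widehat\rho_{zy}$. The target term on the right of \eqref{eq:infcurvlowbd} contributes (after the $q_{xyz}$ normalization) something of order $\tfrac12 (u+v)\, a c$, while $\mathcal{A}_\rho(f)$ contributes $\tfrac14(u a^2 + v c^2)$ and the $\tfrac12\langle P\rho,\Gamma_\rho f\rangle_\pi$ term contributes a nonnegative quantity which — after the lower bounds from the previous paragraph — dominates something like $\tfrac{Q_{\min}}{4}(u' a^2 + v' c^2)$ for suitable $u',v'$. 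The heart of the matter is then the elementary inequality: for any $a,c\ge 0$ and positive weights,
\[
\tfrac{Q_{\min}}{8}\bigl(u a^2 + v c^2\bigr) + \tfrac{2C^2}{Q_{\min}}\cdot\tfrac14\bigl(u a^2 + v c^2\bigr) \ge C\cdot\tfrac12 (u+v) a c,
\]
which follows by applying the AM--GM bound $\lambda t^2 + \lambda^{-1}s^2 \ge 2 ts$ separately to the $ua^2$--$vc^2$ pairing with the right choice of $\lambda$; more precisely, splitting $(u+v)ac \le u\cdot ac + v\cdot ac$ and then using $\tfrac{Q_{\min}}{4}a^2 + \tfrac{C^2}{Q_{\min}}c^2 \ge C a c$ (and the symmetric version) handles the two pieces. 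Summing this over $x,y,z$ against $q_{xyz}\ge 0$ yields \eqref{eq:infcurvlowbd}.

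The main obstacle I anticipate is the first paragraph: getting the clean lower bound for the symmetrized form of $\langle P\rho,\Gamma_\rho f\rangle_\pi$ in terms of the $\widehat\rho_{xy}$'s. One has to be careful that $\langle P\rho,\Gamma_\rho f\rangle_\pi$ also contains the diagonal contributions (terms where the $\rho$-argument index coincides with $x$ or $y$) and terms involving $\partial_2\theta$; keeping track of exactly which pieces are manifestly nonnegative and which must be compared to $\widehat\rho$ is the delicate bookkeeping step. A safe route is to first write $\langle P\rho,\Gamma_\rho f\rangle_\pi = \sum_{x\sim y}\pi(x)Q(x,y)\,\partial_1\theta(\rho_x,\rho_y)(f(y)-f(x))^2\,(P\rho)(x)$ directly from \eqref{eq:Gammarho}, then substitute $(P\rho)(x) = \sum_z Q(x,z)\rho(z) \ge Q(x,z)\rho(z)$ for any fixed neighbour $z$ (discarding the rest, since everything is nonnegative), bring in $q_{xyz}$, symmetrize, and finally use $\rho(z)\,\partial_1\theta(\rho_x,\rho_y) \ge$ a constant multiple of $\widehat\rho_{xy}$ whenever the relevant densities are comparable — or, more robustly, just track it as $\rho(z)\partial_1\theta(\rho_x,\rho_y)$ and feed that directly into the AM--GM step, since the factor $Q_{\min}$ comes precisely from $Q(x,z)\ge Q_{\min}$. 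Once the correct nonnegative lower bound is isolated, the remaining inequality is the routine AM--GM splitting described above.
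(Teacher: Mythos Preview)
Your pointwise inequality in the second paragraph is false, and this is not a bookkeeping issue but a genuine structural gap. With your notation $u=\widehat\rho_{xy}$, $v=\widehat\rho_{zy}$, $a=|f(y)-f(x)|$, $c=|f(y)-f(z)|$, you claim
\[
\Bigl(\tfrac{Q_{\min}}{8}+\tfrac{C^2}{2Q_{\min}}\Bigr)(u a^2+v c^2)\ \ge\ \tfrac{C}{2}(u+v)\,ac.
\]
Take $u=1$, $v=0$, $a=1$, $c\to\infty$: the left side is bounded while the right side diverges. The same failure occurs in your suggested AM--GM splitting, since $\tfrac{Q_{\min}}{4}a^2+\tfrac{C^2}{Q_{\min}}c^2\ge Cac$ applied to the $u$-piece needs a $u c^2$ term on the left, which you do not have. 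The root cause is your claim that $\rho(z)\partial_1\theta(\rho_x,\rho_y)$ is bounded below by something comparable to $\widehat\rho_{xy}$: this is simply false (let $\rho_z\to 0$ with $\rho_x,\rho_y$ fixed).

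What actually makes the argument work is a \emph{crossed} lower bound: from $\tfrac12\langle P\rho,\Gamma_\rho f\rangle_\pi$ one extracts, after the $q_{xyz}$-expansion and a symmetrization, a multiple of $(f(y)-f(x))^2\,\widehat\rho_{yz}+(f(y)-f(z))^2\,\widehat\rho_{xy}$, i.e.\ $v a^2+u c^2$ rather than $u a^2+v c^2$. The mechanism is: after the $x\leftrightarrow y$ symmetrization one has both $P\rho(y)\partial_1\theta(\rho_y,\rho_x)$ and $P\rho(x)\partial_1\theta(\rho_x,\rho_y)$; the first expands as $\sum_z Q(y,z)\rho_z\,\partial_1\theta(\rho_y,\rho_x)$, while for the second one uses $P\rho(x)\ge Q_{\min}\rho_y$ (since $y\sim x$) to produce $Q_{\min}\,\rho_y\,\partial_2\theta(\rho_y,\rho_x)$. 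Together these assemble $Q_{\min}\,b_0(\rho_z,\rho_y,\rho_x)$ with $b_0(\alpha,\beta,\gamma)=\partial_1\theta(\beta,\gamma)\alpha+\partial_2\theta(\beta,\gamma)\beta$, and the key fact (from the tangent-line inequality for the concave map $t\mapsto\theta(t,\beta)$, i.e.\ \eqref{eq:rhoasderiv} in convexity form) is $b_0(\rho_z,\rho_y,\rho_x)\ge\theta(\rho_z,\rho_y)=\widehat\rho_{yz}$, the density on the \emph{other} edge. With this crossing, the termwise estimate
\[
\tfrac{Q_{\min}}{8}(v a^2+u c^2)+\tfrac{C^2}{2Q_{\min}}(u a^2+v c^2)\ \ge\ \tfrac{C}{2}(u+v)\,ac
\]
does follow from AM--GM, since each of $u,v$ now multiplies both an $a^2$ and a $c^2$. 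Your ``safe route'' of keeping $\rho_z\partial_1\theta(\rho_y,\rho_x)$ raw does not help either, because without the companion $\rho_y\partial_2\theta(\rho_y,\rho_x)$ term you cannot reach $\widehat\rho_{yz}$, and no uniform comparison to $\widehat\rho_{xy}$ is available.
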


For the proof and for later considerations, we introduce the following notation in accordance with \eqref{eq: b-function-defn}:
\begin{equation} \label{defn: b0-function}
b_0(\alpha,\beta,\gamma):=\partial_1\theta(\beta,\gamma)\alpha + \partial_2\theta(\beta,\gamma)\beta.
\end{equation}

\begin{proof}
  Using \eqref{eq:Pdef} and the fact that $P \rho(x) \ge Q_{\min} \rho(y)$ for all $y \sim x$, we obtain, via symmetrization,
  \begin{align*}
      \langle P \rho,\Gamma_\rho f \rangle_\pi &= \sum_y P\rho(y) \Gamma_\rho f(y) \pi (y) \\
      &= \sum_y P\rho(y) \left( \sum_x \partial_1 \theta(\rho_y,\rho_x) (f(y)-f(x))^2 Q(y,x) \right) \pi(y) \\
      &= \frac{1}{2} \sum_{x,y} (f(y)-f(x))^2 (P\rho(y) \partial_1 \theta(\rho_y,\rho_x) + P\rho(x) \partial_1 \theta(\rho_x,\rho_y))Q(y,x)\pi(y) \\
      &\ge \frac{1}{2} \sum_{x,y,z} q_{xyz} (f(y)-f(x))^2 (\partial_1 \theta(\rho_y,\rho_x)\rho_z + Q_{\min} \partial_1 \theta(\rho_x,\rho_y) \rho_y) \\
      &\ge \frac{Q_{\min}}{2} \sum_{x,y,z} q_{xyz} (f(y)-f(x))^2 (\partial_1 \theta(\rho_y,\rho_x)\rho_z + \partial_2 \theta(\rho_y,\rho_x) \rho_y) \\
      &= \frac{Q_{\min}}{2} \sum_{x,y,z} q_{xyz} (f(y)-f(x))^2 b_0(\rho_z,\rho_y,\rho_x).
  \end{align*}
  A second symmetrization yields
  \begin{multline} \label{eq:PrhoGammarhofnorm}
  \langle P \rho,\Gamma_\rho f \rangle_\pi \ge \\ \frac{Q_{\min}}{4} \sum_{x,y,z} q_{xyz} \left( (f(y)-f(x))^2 b_0(\rho_z,\rho_y,\rho_x) + (f(y)-f(z))^2 b_0(\rho_x,\rho_y,\rho_z) \right). 
  \end{multline}
  Since (see \cite[Lemma 2.2]{EM-12})
  \begin{equation} \label{eq:b0est0} 
  b_0(\rho_z,\rho_y,\rho_x) = \partial_1 \theta(\rho_y,\rho_x)\rho_z + \partial_2 \theta(\rho_y,\rho_x)\rho_y \ge \widehat \rho_{yz}, 
  \end{equation}
  we finally obtain the inequality
  \begin{equation} \label{eq:PrhoGrhof}
  \langle P \rho,\Gamma_\rho f \rangle_\pi \ge \frac{Q_{\min}}{4} \sum_{x,y,z} q_{xyz} (f(y)-f(x))^2 \widehat \rho_{yz} + (f(y)-f(z))^2 \widehat \rho_{yx}).
  \end{equation}
  Using the expressions \eqref{eq:PrhoGrhof}, \eqref{eq:Arhoqxyz} and \eqref{eq:absArhofDf}, the inequality of the lemma follows now via the termwise inequalities
  $$ \frac{Q_{\min}}{8}\left( A^2 \widehat \rho_{zy} + B^2 \widehat \rho_{xy} \right) + \frac{C^2}{2Q_{\min}} \left( A^2 \widehat \rho_{xy} + B^2 \widehat \rho_{zy} \right) \ge \frac{C}{2} (\widehat \rho_{xy} + \widehat \rho_{zy}) \cdot |A| \cdot |B| $$
  with $A = f(y)-f(x)$ and $B = f(y)-f(z)$, which is easily verified.
\end{proof}

An immediate consequence of the lemma is the following general lower $\theta$-curvature bound for infinite dimension.

\begin{theorem} \label{thm:univgenlowbondinfdim}
  Let $(X,Q,\pi)$ be an irreducible, reversible finite Markov chain, $Q_{\min} := \min_{x \sim y} Q(x,y)$, and $\theta$ an arbitrary mean. Then $(X,Q,\pi)$ satisfies $CD_\theta(-K,\infty)$ with
  $$ K = \frac{1}{2} + \frac{2}{Q_{\min}}. $$
\end{theorem}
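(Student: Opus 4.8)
The plan is to derive Theorem \ref{thm:univgenlowbondinfdim} directly from Lemma \ref{lem:univlowbduseful} by making the right choice of the free constant $C$ and assembling the pieces via the identity \eqref{eq:BrhoArhoLaprel}. Recall that $\mathcal{B}_\rho(f) = \frac{1}{2}\langle \Delta \rho, \Gamma_\rho f\rangle_\pi - \mathcal{A}_\rho(f,\Delta f)$, and that $P = \Delta + \mathrm{Id}$, so $\frac12\langle P\rho,\Gamma_\rho f\rangle_\pi = \frac12\langle \Delta\rho,\Gamma_\rho f\rangle_\pi + \frac12\langle \rho,\Gamma_\rho f\rangle_\pi = \frac12\langle\Delta\rho,\Gamma_\rho f\rangle_\pi + \frac12\mathcal{A}_\rho(f)$ by \eqref{eq:ArhoGamma}. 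Hence
\begin{equation*}
\mathcal{B}_\rho(f) = \frac12\langle P\rho,\Gamma_\rho f\rangle_\pi - \frac12\mathcal{A}_\rho(f) - \mathcal{A}_\rho(f,\Delta f).
\end{equation*}

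Next I would bound the term $-\mathcal{A}_\rho(f,\Delta f)$ from below. By \eqref{eq:ArhofDfqxyz} and the triangle inequality, $|\mathcal{A}_\rho(f,\Delta f)| \le |\mathcal{A}_\rho|(f,\Delta f)$, so $-\mathcal{A}_\rho(f,\Delta f) \ge -|\mathcal{A}_\rho|(f,\Delta f)$. Plugging this in and then applying Lemma \ref{lem:univlowbduseful} with the constant $C$ still free gives
\begin{equation*}
\mathcal{B}_\rho(f) \ge \frac12\langle P\rho,\Gamma_\rho f\rangle_\pi - \frac12\mathcal{A}_\rho(f) - |\mathcal{A}_\rho|(f,\Delta f) \ge -\frac12\mathcal{A}_\rho(f) - \frac{1}{C}\cdot\frac{2C^2}{Q_{\min}}\mathcal{A}_\rho(f),
\end{equation*}
where the last step uses \eqref{eq:infcurvlowbd} in the form $|\mathcal{A}_\rho|(f,\Delta f) \le \frac1C\left(\frac12\langle P\rho,\Gamma_\rho f\rangle_\pi + \frac{2C^2}{Q_{\min}}\mathcal{A}_\rho(f)\right)$. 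This yields $\mathcal{B}_\rho(f) \ge -\left(\frac12 + \frac{2C}{Q_{\min}}\right)\mathcal{A}_\rho(f)$, and choosing $C = 1$ gives exactly $\mathcal{B}_\rho(f) \ge -K\,\mathcal{A}_\rho(f)$ with $K = \frac12 + \frac{2}{Q_{\min}}$, which is the asserted $CD_\theta(-K,\infty)$ bound by \eqref{eq:Ktheta}.

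There is essentially no serious obstacle here: all the real work—the two symmetrizations producing $b_0$, the lower bound $b_0 \ge \widehat\rho$ from \cite[Lemma 2.2]{EM-12}, and the termwise AM–GM-type inequality—has already been absorbed into Lemma \ref{lem:univlowbduseful}. The only point requiring a little care is bookkeeping: keeping track of the relation $P = \Delta + \mathrm{Id}$ so that the extra $\frac12\mathcal{A}_\rho(f)$ is correctly accounted for (this is the source of the $\frac12$ in $K$), and noting that $C=1$ is the optimal choice among $C>0$ for the infinite-dimensional statement (optimizing $\frac12 + \frac{2C}{Q_{\min}}$ over $C$ would push $C\to 0$, but then Lemma \ref{lem:univlowbduseful} degenerates, so one keeps $C$ fixed and the statement simply records the clean value $C=1$; a sharper trade-off is exactly what the finite-dimensional refinement in Theorem \ref{thm:univgenlowbondfindim} exploits). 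I would also remark that, unlike the finite-dimensional version, this argument uses only that $\theta$ is a mean via \eqref{eq:b0est0}, so it applies to arbitrary $\theta$-curvature.
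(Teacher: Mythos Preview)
Your proof is correct and follows essentially the same route as the paper: rewrite $\mathcal{B}_\rho(f)$ via $P=\Delta+\mathrm{Id}$ so that the $CD_\theta(-K,\infty)$ condition reads $\frac12\langle P\rho,\Gamma_\rho f\rangle_\pi + (K-\frac12)\mathcal{A}_\rho(f)\ge \mathcal{A}_\rho(f,\Delta f)$, then apply Lemma~\ref{lem:univlowbduseful} with $C=1$. One small cosmetic point: your displayed chain with general $C$ silently drops the term $(1-\tfrac1C)\cdot\tfrac12\langle P\rho,\Gamma_\rho f\rangle_\pi$, which is only legitimate for $C\ge 1$; since you immediately set $C=1$ (where that term vanishes exactly), nothing is lost, but the parenthetical remark about pushing $C\to 0$ is off---the inequality you wrote does not survive for $C<1$, so $C=1$ is in fact the optimal admissible choice, not merely a ``clean'' one.
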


\begin{proof}
  It follows from Definition \ref{defn: CDtheta} and \eqref{eq:BrhoArhoLaprel} that $(X,Q,\pi)$ satisfies $CD_\theta(-K,\infty)$ if we have for all $\rho \in \mathcal{P}_*(X)$ and all $f \in \mathbb{R}^X$,
  $$ \frac{1}{2} \langle P\rho,\Gamma_\rho f \rangle_{\pi} + \left( K - \frac{1}{2} \right) \mathcal{A}_\rho(f) \ge \mathcal{A}_\rho(f,\Delta f). $$
  Choosing $C=1$ in Lemma \ref{lem:univlowbduseful} yields for all $\rho \in \mathcal{P}_*(X)$ and all $f \in \mathbb{R}^X$,
  $$ \frac{1}{2} \langle P \rho,\Gamma_\rho f \rangle_\pi + \frac{2}{Q_{\min}} \mathcal{A}_\rho(f) \ge \mathcal{A}_\rho(f,\Delta,f), $$
  which proves the theorem.
\end{proof}

\subsection{Lower entropic curvature bound for finite dimension}

In this subsection we prove the following general lower entropic curvature bound for finite dimension.

\begin{theorem} \label{thm:univgenlowbondfindim}
Let $(X,Q,\pi)$ be an irreducible, reversible finite Markov chain, $Q_{\min}:= \min_{x \sim y} Q(x,y)$, and $N \in (0,\infty]$. Then
$(X,Q,\pi)$ satisfies $CD_{ent}(-K,N)$ with
$$ K = \frac{1}{2} + \frac{4}{Q_{\min}}\left( 1 + \frac{4}{N^2} \right). $$
\end{theorem}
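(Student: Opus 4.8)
The plan is to mimic the structure of the infinite-dimensional bound in Theorem~\ref{thm:univgenlowbondinfdim}, but now keeping track of the extra $\frac{1}{N}\langle \rho,(\Delta f)^2\rangle_\pi$ term that must be subtracted off. By Definition~\ref{defn: CDtheta} and \eqref{eq:BrhoArhoLaprel}, the condition $CD_{ent}(-K,N)$ is equivalent to
$$ \frac{1}{2}\langle P\rho,\Gamma_\rho f\rangle_\pi + \left(K-\frac12\right)\mathcal{A}_\rho(f) \ge \mathcal{A}_\rho(f,\Delta f) + \frac{1}{N}\langle \rho,(\Delta f)^2\rangle_\pi $$
for all $\rho\in\mathcal{P}_*(X)$ and $f\in\mathbb{R}^X$. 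Since $|\mathcal{A}_\rho(f,\Delta f)|\le |\mathcal{A}_\rho|(f,\Delta f)$, Lemma~\ref{lem:univlowbduseful} already handles the first summand on the right with room to spare; the work is to absorb the $\frac1N\langle\rho,(\Delta f)^2\rangle_\pi$ term.

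First I would expand $\langle\rho,(\Delta f)^2\rangle_\pi$ using $\Delta f(y)=\sum_x Q(y,x)(f(x)-f(y))$, so that, after introducing the $q_{xyz}$ notation of \eqref{eq:qxyz}, one gets
$$ \langle\rho,(\Delta f)^2\rangle_\pi = \sum_{x,y,z} q_{xyz}\,\rho(y)\,(f(y)-f(x))(f(y)-f(z)). $$
Bounding this by $\sum_{x,y,z} q_{xyz}\,\rho(y)\,|f(y)-f(x)|\,|f(y)-f(z)|$ and comparing with \eqref{eq:absArhofDf}, I would note that $\rho(y)$ is comparable to $\widehat\rho_{xy}$ and $\widehat\rho_{zy}$ only up to constants in general, so instead the cleaner route is a direct Cauchy--Schwarz/AM--GM split at the level of the triple sum. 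Writing $A=f(y)-f(x)$, $B=f(y)-f(z)$, the goal reduces to a termwise inequality of the shape
$$ \frac{Q_{\min}}{8}\bigl(A^2\widehat\rho_{zy}+B^2\widehat\rho_{xy}\bigr) + \frac{c^2}{2Q_{\min}}\bigl(A^2\widehat\rho_{xy}+B^2\widehat\rho_{zy}\bigr) \ \ge\ \frac{1}{N}\rho(y)\,|A|\,|B| + (\text{what is needed for }\mathcal{A}_\rho(f,\Delta f)), $$
where one uses $\rho(y)\le \frac{1}{Q_{\min}}(\widehat\rho_{xy}+\widehat\rho_{zy})$-type bounds (coming from the fact that $P\rho(y)\ge Q_{\min}\rho(y)$ and $\widehat\rho_{xy}\ge$ a multiple of $\min(\rho_x,\rho_y)$). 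Optimizing the constant $C$ in Lemma~\ref{lem:univlowbduseful} — taking $C = 1 + \frac{4}{N^2}$ rather than $C=1$ — should then produce the stated $K=\frac12+\frac{4}{Q_{\min}}(1+\frac{4}{N^2})$: the $\frac{2C^2}{Q_{\min}}$ from the lemma becomes $\frac{4}{Q_{\min}}(1+\frac{4}{N^2})$ after the bookkeeping (with a factor of $2$ slack reserved to cover the $\frac1N$-term).

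The main obstacle I anticipate is getting the $N$-dependence to come out as $1+\frac{4}{N^2}$ rather than something like $1+\frac{c}{N}$. A naive estimate of $\frac1N\langle\rho,(\Delta f)^2\rangle_\pi$ against $\mathcal{A}_\rho(f)$ would cost a term linear in $\frac1N$; the quadratic $\frac{4}{N^2}$ strongly suggests that one should \emph{not} discard the $\Gamma_{2,\rho}$-positivity but rather complete a square — i.e.\ keep a small positive multiple of $\langle P\rho,\Gamma_\rho f\rangle_\pi$ (or of the nonnegative ``$T_1+T_3$''-type pieces implicit in $\Gamma_{2,\rho}$) in reserve and play it off against $\frac1N\langle\rho,(\Delta f)^2\rangle_\pi$ via a weighted Young inequality whose weight is tuned to $N$. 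Concretely, I would split $\frac12\langle P\rho,\Gamma_\rho f\rangle_\pi = \lambda\langle P\rho,\Gamma_\rho f\rangle_\pi + (\frac12-\lambda)\langle P\rho,\Gamma_\rho f\rangle_\pi$, use the first piece (which dominates a $q_{xyz}$-weighted sum of $(f(y)-f(x))^2\widehat\rho_{yz}$ by \eqref{eq:PrhoGrhof}) together with $2ab\le \lambda' a^2 + \frac{1}{\lambda'}b^2$ to control the $\frac1N$-term, and reserve the second piece plus $\frac{2C^2}{Q_{\min}}\mathcal{A}_\rho(f)$ for $\mathcal{A}_\rho(f,\Delta f)$ exactly as in Lemma~\ref{lem:univlowbduseful}. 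Choosing $\lambda$ and $\lambda'$ proportional to $\frac1N$ yields the $\frac{1}{N^2}$ scaling; checking that all the resulting termwise inequalities hold with the advertised numerical constants ($4$ and $4/N^2$) is the routine-but-delicate endgame.
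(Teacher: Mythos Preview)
Your framework is right up to the termwise reduction, but the proposal has a genuine gap at the step where you try to absorb the $\frac{1}{N}\langle\rho,(\Delta f)^2\rangle_\pi$ term. You suggest controlling $\rho(y)$ by ``$\rho(y)\le \frac{1}{Q_{\min}}(\widehat\rho_{xy}+\widehat\rho_{zy})$-type bounds''. No such bound exists for the logarithmic mean: if $\rho_x,\rho_z\ll\rho_y$ then $\widehat\rho_{xy},\widehat\rho_{zy}\ll\rho_y$ (indeed $\theta_{\log}(s,1)\to 0$ as $s\to 0$), so $\rho_y/(\widehat\rho_{xy}+\widehat\rho_{zy})$ is unbounded. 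Consequently, once you pass from the sharper estimate \eqref{eq:PrhoGammarhofnorm} (which keeps the $b_0$ coefficients) to \eqref{eq:PrhoGrhof} (which has already used $b_0\ge\widehat\rho$), no splitting of the remaining $\widehat\rho$-weighted quadratic forms via Young's inequality can dominate $\rho_y\,|A|\,|B|$ termwise.

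What the paper does instead is to retain the $b_0$ terms from \eqref{eq:PrhoGammarhofnorm} and view the termwise inequality as a quadratic form $C_1 A^2 + C_2 B^2 \ge 2C_3 AB$, reducing to the discriminant condition $C_1C_2\ge C_3^2$. Here $C_3$ contains $\rho_y/N$, so squaring produces the $\rho_y^2/N^2$ you were hoping for; but to close the argument one then needs
\[
b_0(\rho_z,\rho_y,\rho_x)\,\widehat\rho_{zy} + b_0(\rho_x,\rho_y,\rho_z)\,\widehat\rho_{xy} \ \ge\ \tfrac12\,\rho_y^2,
\]
which is exactly inequality \eqref{eq:b0est2} of Lemma~\ref{lem:ineqforgenlowbd} (and is special to $\theta_{\log}$). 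The companion inequality \eqref{eq:b0est1} handles the $(\widehat\rho_{xy})^2+(\widehat\rho_{zy})^2$ piece. These two estimates are the missing ingredient in your plan; without them the $\rho_y$ factor cannot be controlled, and with them the discriminant approach immediately gives $(2K-1)Q_{\min}\ge 8+32/N^2$, i.e.\ the stated $K$.
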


Note that this result is slightly weaker than the result in Theorem \ref{thm:univgenlowbondinfdim} in the case $N = \infty$.
For the proof of Theorem \ref{thm:univgenlowbondfindim}, we follow similar arguments as in the previous section. The additional ingredient in the proof are the inequalities of Lemma \ref{lem:ineqforgenlowbd}.

\begin{proof}[Proof of Theorem \ref{thm:univgenlowbondfindim}]
  For any constant $K \ge \frac{1}{2}$, $(X,Q,\pi)$ satisfies $CD_{ent}(-K,N)$ if we have 
  \begin{equation} \label{eq:lowcurvbdcondfindim} 
  \frac{1}{2} \langle P\rho,\Gamma_\rho f \rangle_{\pi} + \left( K - \frac{1}{2} \right) \mathcal{A}_\rho(f) \ge \mathcal{A}_\rho(f,\Delta f) + \frac{1}{N} \langle \rho,(\Delta f)^2 \rangle_\pi 
  \end{equation}
  for all $\rho \in \mathcal{P}_*(X)$ and all $f \in \mathbb{R}^X$. We have
  \begin{align*}
    \langle \rho, (\Delta f)^2 \rangle_\pi &= \sum_y \rho_y \left( \sum_x Q(y,x)(f(x)-f(y)) \right) \left( \sum_z Q(y,z)(f(z)-f(y)\right) \pi(x) \\
    &= \sum_{x,y,z} q_{xyz} \rho_y (f(y)-f(x))(f(y)-f(z)),
  \end{align*}
  with $q_{xyz}$ given in \eqref{eq:qxyz}. Using this, together with \eqref{eq:PrhoGammarhofnorm}, \eqref{eq:Arhoqxyz} and \eqref{eq:ArhofDfqxyz}, we conclude that \eqref{eq:lowcurvbdcondfindim} holds if we have the following termwise inequalities
  for all $x \sim y \sim z$ and all $A = f(y)-f(x), B = f(y) - f(z) \in \mathbb{R}$:
  \begin{multline*}
  Q_{\min} (A^2 b_0(\rho_z,\rho_y,\rho_x) + B^2 b_0(\rho_x,\rho_y,\rho_z)) + (2K-1) (A^2 \widehat \rho_{xy} + B^2 \widehat \rho_{zy} ) \\ \ge 4AB\left( -(\widehat \rho_{xy} + \widehat \rho_{zy}) + \frac{2 \rho_y}{N} \right).
  \end{multline*}
  Rearrangement yields
  \begin{multline*}
      \left( Q_{\min} b_0(\rho_z,\rho_y,\rho_x) +(2K-1) \widehat \rho_{xy} \right) A^2 + \left( Q_{\min} b_0(\rho_x,\rho_y,\rho_z) +(2K-1) \widehat \rho_{zy} \right) B^2 \\ \ge \left( -2(\widehat \rho_{xy} + \widehat \rho_{zy}) + \frac{4 \rho_y}{N} \right) 2AB.
  \end{multline*}
  This is inequality is of the form
  $$ C_1 A^2 + C_2 B^2 \ge 2 C_3 A B $$
  with $C_1,C_2 \ge 0$, since we assumed $K \ge \frac{1}{2}$. This inequality holds if $C_1 C_2 \ge C_3^2$, so $(X,Q,\pi)$ satisfies $CD_{ent}(-K,N)$ if we have
  \begin{equation*}
  \left( Q_{\min} b_0(\rho_z,\rho_y,\rho_x) +(2K-1) \widehat \rho_{xy} \right) \left( Q_{\min} b_0(\rho_x,\rho_y,\rho_z) +(2K-1) \widehat \rho_{zy} \right)
  \ge \left( -2(\widehat \rho_{xy} + \widehat \rho_{zy}) + \frac{4 \rho_y}{N} \right)^2.
  \end{equation*}
  As $K \ge \frac{1}{2}$, we cross-multiply and conclude that this inequality is satisfied as soon as we have
  \begin{equation*}
  (2K-1)Q_{\min}(b_0(\rho_z,\rho_y,\rho_x)\widehat \rho_{zy} + b_0(\rho_x,\rho_y,\rho_z) \widehat \rho_{xy})
  \ge \left( -2(\widehat \rho_{xy} + \widehat \rho_{zy}) + \frac{4 \rho_y}{N} \right)^2.
  \end{equation*}
  Since $(a-b)^2 \le a^2 + b^2$ for $a,b \ge 0$, in inequality holds, in turn, if we have
  \begin{equation*} 
  (2K-1)Q_{\min}(b_0(\rho_z,\rho_y,\rho_x)\widehat \rho_{zy} + b_0(\rho_x,\rho_y,\rho_z) \widehat \rho_{xy}) 
  \ge 8 ((\widehat \rho_{xy})^2 + (\widehat \rho_{zy})^2) + \frac{16 \rho_y^2}{N^2}.
  \end{equation*}
  It follows from \eqref{eq:b0est1} and \eqref{eq:b0est2} in Lemma \ref{lem:ineqforgenlowbd} that
  \begin{equation*}
  \left( 8 + \frac{32}{N^2} \right) (b_0(\rho_z,\rho_y,\rho_x)\widehat \rho_{zy} + b_0(\rho_x,\rho_y,\rho_z) \widehat \rho_{xy}) 
  \ge 8 ((\widehat \rho_{xy})^2 + (\widehat \rho_{zy})^2) + \frac{16 \rho_y^2}{N^2},
  \end{equation*}
  which means that $(X,Q,\pi)$ satisfies $CD_{ent}(-K,N)$ if we have
  $$ (2K-1) Q_{\min} \ge 8 + \frac{32}{N^2}, $$
  that is
  $$ K = \frac{1}{2} + \frac{4}{Q_{\min}}\left( 1 + \frac{4}{N^2} \right). $$
\end{proof}

\subsection{Perturbation results}
\label{subsec:perturb}

In this subsection we derive curvature estimates for pairs of ``closeby'' Markov chains with associated ``closeby'' means. We also introduce the notion of a \emph{$\delta$-curvature} for Markov chains with associated means, which is based on a restriction of the probability densities $\rho \in \mathcal{P}_*(X)$, which have a $\delta$-bound on their logarithmic gradients.

\begin{lemma} \label{lem:Bdiffperturb}
Let $(X,Q,\pi)$ and $(X,\widetilde Q,\widetilde \pi)$ be two irreducible, reversible finite Markov chains with the same underlying combinatorial graph and corresponding means $\theta$ and $\widetilde \theta$.
Assume we have for some $\rho \in \mathcal{P}_*(X)$ and some non-constant $f \in \mathbb{R}^X$:
\begin{align}
    \left\vert \log \widetilde{\mathcal{A}}_\rho(f) - \log \mathcal{A}_\rho(f) \right\vert &\le  \epsilon, \label{eq:Aest} \\
    \left\vert \log \langle \widetilde P \rho, \widetilde \Gamma_\rho f \rangle_{\widetilde \pi} - \log \langle P \rho, \Gamma_\rho f \rangle_{\pi}\right\vert &\le \epsilon, \label{eq:Pest} \\
    \left\vert \log [\widetilde{\mathcal{A}}_\rho]^\pm(f,\widetilde \Delta f) - \log [\mathcal{A}_\rho]^\pm(f,\Delta f) \right\vert &\le \epsilon, \label{eq:Apmest}
    \end{align}
where we define $\log 0 -\log 0 = 0$.    
Then we have
$$ \widetilde{\mathcal{B}}_\rho(f) - \mathcal{B}_\rho(f) \ge - \sinh(\epsilon) \left( 3 [\mathcal{B}_\rho(f)]^+ + \left( 2+ \frac{16}{Q_{\rm{min}}} \right) \mathcal{A}_\rho(f) \right) $$
with
$$ Q_{\rm{min}} = \inf_{x \sim y} \left\{ Q(x,y), \widetilde Q(x,y) \right\}. $$
\end{lemma}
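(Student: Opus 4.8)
The plan is to express the difference $\widetilde{\mathcal{B}}_\rho(f) - \mathcal{B}_\rho(f)$ in terms of the three quantities controlled by hypotheses \eqref{eq:Aest}, \eqref{eq:Pest}, \eqref{eq:Apmest}, and then bound each contribution using the elementary inequality $|e^t - 1| \le \sinh(\epsilon) \cdot (\text{something})$ valid for $|t| \le \epsilon$. The starting point is the identity \eqref{eq:BrhoArhoLaprel}, which gives
$$ \mathcal{B}_\rho(f) = \tfrac{1}{2} \langle \Delta \rho, \Gamma_\rho f \rangle_\pi - \mathcal{A}_\rho(f,\Delta f), $$
and I would rewrite $\tfrac12\langle \Delta\rho, \Gamma_\rho f\rangle_\pi = \tfrac12\langle P\rho,\Gamma_\rho f\rangle_\pi - \tfrac12\mathcal{A}_\rho(f)$ using $P = \Delta + \mathrm{Id}$ and \eqref{eq:ArhoGamma}. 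Similarly, decompose $\mathcal{A}_\rho(f,\Delta f) = [\mathcal{A}_\rho]^+(f,\Delta f) - [\mathcal{A}_\rho]^-(f,\Delta f)$. This yields
$$ \mathcal{B}_\rho(f) = \tfrac12\langle P\rho,\Gamma_\rho f\rangle_\pi - \tfrac12\mathcal{A}_\rho(f) - [\mathcal{A}_\rho]^+(f,\Delta f) + [\mathcal{A}_\rho]^-(f,\Delta f), $$
and the analogous formula for $\widetilde{\mathcal{B}}_\rho(f)$ with tildes throughout.

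**Key steps.**

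First I would subtract the two expressions termwise. For each of the four matched pairs, write $\widetilde{X} - X = X(e^t - 1)$ where $|t| \le \epsilon$ is the relevant log-difference, so $|\widetilde X - X| \le |X|\,|e^\epsilon - 1|$ when $X \ge 0$, and more carefully $\widetilde X - X \ge -|X|\sinh(\epsilon)\cdot(\text{sign-adjusted factor})$; since $e^\epsilon - 1 \le e^{\epsilon} - e^{-\epsilon} = 2\sinh(\epsilon)$ is too lossy, I would instead use that for $|t|\le\epsilon$ one has $e^t - 1 \ge -(1 - e^{-\epsilon}) \ge -\sinh(\epsilon)$ and $e^t - 1 \le e^\epsilon - 1$; combined with $e^\epsilon - 1 \le e^{\epsilon} - e^{-\epsilon}$... here the precise packaging of constants is what produces the factor $3$ on $[\mathcal{B}_\rho(f)]^+$ and the factor $2 + 16/Q_{\min}$. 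The term $\tfrac12\langle P\rho,\Gamma_\rho f\rangle_\pi$ is the one not directly bounded by $\mathcal{A}_\rho(f)$ or $[\mathcal{B}_\rho(f)]^+$, so I would invoke Lemma \ref{lem:univlowbduseful} (with a suitable choice of $C$, e.g. $C$ chosen so that $\langle P\rho,\Gamma_\rho f\rangle_\pi \le \text{(curvature-type term)} + (8/Q_{\min})\mathcal{A}_\rho(f)$, i.e. relate $\langle P\rho,\Gamma_\rho f\rangle_\pi$ back to $\mathcal{B}_\rho(f)$ and $\mathcal{A}_\rho(f)$) to convert a bound in terms of $\tfrac12\langle P\rho,\Gamma_\rho f\rangle_\pi$ into one in terms of $[\mathcal{B}_\rho(f)]^+$ and $\mathcal{A}_\rho(f)/Q_{\min}$. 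Finally, collect all error contributions, each of the form $\sinh(\epsilon)\cdot(\text{coefficient})\cdot(\text{a nonnegative quantity} \le 3[\mathcal{B}_\rho(f)]^+ + (\cdots)\mathcal{A}_\rho(f))$, and sum.

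**Main obstacle.**

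The genuinely delicate point is bookkeeping the $\tfrac12\langle P\rho,\Gamma_\rho f\rangle_\pi$ term: it is not itself one of the target quantities $[\mathcal{B}_\rho(f)]^+$ and $\mathcal{A}_\rho(f)$, so one must both (a) bound its perturbation error and (b) re-express that bound in the allowed currency using Lemma \ref{lem:univlowbduseful}. Since $\langle P\rho,\Gamma_\rho f\rangle_\pi$ can be large even when $\mathcal{B}_\rho(f)$ is small or negative, the inequality $\tfrac12\langle P\rho,\Gamma_\rho f\rangle_\pi = \mathcal{B}_\rho(f) + [\mathcal{A}_\rho]^+(f,\Delta f) - [\mathcal{A}_\rho]^-(f,\Delta f) + \tfrac12\mathcal{A}_\rho(f)$ together with $[\mathcal{A}_\rho]^\pm(f,\Delta f) \le |\mathcal{A}_\rho|(f,\Delta f)$ and Lemma \ref{lem:univlowbduseful} (applied with $C = 1$, say, bounding $|\mathcal{A}_\rho|(f,\Delta f)$ by $\tfrac12\langle P\rho,\Gamma_\rho f\rangle_\pi + (2/Q_{\min})\mathcal{A}_\rho(f)$, then solving) is what yields an estimate like $\tfrac12\langle P\rho,\Gamma_\rho f\rangle_\pi \le C'[\mathcal{B}_\rho(f)]^+ + (C''/Q_{\min})\mathcal{A}_\rho(f)$. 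Tracking the constants $C', C''$ through this circular-looking argument, and checking that they assemble to exactly $3$ and $2 + 16/Q_{\min}$ after multiplication by $\sinh(\epsilon)$, is the part requiring the most care; everything else is routine termwise estimation.
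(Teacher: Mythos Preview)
Your strategy is correct and essentially matches the paper's: decompose $\mathcal{B}_\rho(f) = \tfrac12\langle P\rho,\Gamma_\rho f\rangle_\pi - \tfrac12\mathcal{A}_\rho(f) - [\mathcal{A}_\rho]^+(f,\Delta f) + [\mathcal{A}_\rho]^-(f,\Delta f)$, bound each tilded term against its un-tilded counterpart via the log-hypotheses, and absorb the residual $\langle P\rho,\Gamma_\rho f\rangle_\pi$ and $|\mathcal{A}_\rho|(f,\Delta f)$ using Lemma~\ref{lem:univlowbduseful}. One concrete slip: applying Lemma~\ref{lem:univlowbduseful} with $C=1$ in your ``solving'' step yields only the tautology $0 \le [\mathcal{B}_\rho(f)]^+ + (\tfrac12 + \tfrac{2}{Q_{\min}})\mathcal{A}_\rho(f)$ and gives no upper bound on $\tfrac12\langle P\rho,\Gamma_\rho f\rangle_\pi$. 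You need $C>1$; with $C=2$ one gets $\tfrac12\langle P\rho,\Gamma_\rho f\rangle_\pi \le 2[\mathcal{B}_\rho(f)]^+ + (1+\tfrac{8}{Q_{\min}})\mathcal{A}_\rho(f)$ and $|\mathcal{A}_\rho|(f,\Delta f) \le [\mathcal{B}_\rho(f)]^+ + (\tfrac12+\tfrac{8}{Q_{\min}})\mathcal{A}_\rho(f)$, and then your termwise error bound assembles exactly to $-3\sinh(\epsilon)[\mathcal{B}_\rho(f)]^+ - 2\sinh(\epsilon)(1+\tfrac{8}{Q_{\min}})\mathcal{A}_\rho(f)$, using $e^\epsilon + 1 - 2e^{-\epsilon} \le 3\sinh(\epsilon)$ for the $[\mathcal{B}_\rho(f)]^+$ coefficient.

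The paper organises the same computation slightly differently: rather than first deriving the upper bound on $\tfrac12\langle P\rho,\Gamma_\rho f\rangle_\pi$, it bounds $\widetilde{\mathcal{B}}_\rho(f)$ from below by $\frac{e^{-\epsilon}}{2}\langle P\rho,\Gamma_\rho f\rangle_\pi - \mathcal{A}_\rho(f,\Delta f) - \frac{e^\epsilon}{2}\mathcal{A}_\rho(f) + (1-e^\epsilon)|\mathcal{A}_\rho|(f,\Delta f)$ and then subtracts $L = e^\epsilon - e^{-\epsilon}$ times the $C=2$ instance of Lemma~\ref{lem:univlowbduseful}. This choice of $L$ makes the coefficient of $|\mathcal{A}_\rho|(f,\Delta f)$ non-negative (hence droppable) while simultaneously making the coefficients of $\tfrac12\langle P\rho,\Gamma_\rho f\rangle_\pi$ and $-\mathcal{A}_\rho(f,\Delta f)$ equal, so the combination collapses directly to $(2e^{-\epsilon}-e^\epsilon)\mathcal{B}_\rho(f)$. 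The two routes are equivalent, but the paper's avoids the apparent circularity you flagged.
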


\begin{proof} Note that expressions $\mathcal{A}_\rho(f), \langle P \rho,\Gamma_\rho f \rangle_\pi, [\mathcal{A}_\rho]^{\pm}(f,\Delta f)$ and their tilde-counterparts are all non-negative.
It follows from \eqref{eq:ArhoGamma} and \eqref{eq:BrhoArhoLaprel} that 
\begin{align}
    \widetilde{\mathcal{B}}_\rho(f) &= \frac{1}{2} \langle \widetilde P \rho, \widetilde \Gamma_\rho f \rangle_{\widetilde \pi} - \frac{1}{2} \widetilde{\mathcal{A}}_\rho(f) + [\widetilde{\mathcal{A}}_\rho]^-(f,\widetilde \Delta f) - [\widetilde{\mathcal{A}}_\rho]^+(f,\widetilde \Delta f) \nonumber \\
    &\ge e^{-\epsilon} \left( \frac{1}{2} \langle P \rho,\Gamma_\rho f\rangle_\pi + [\mathcal{A}_\rho]^-(f,\Delta f) \right) - e^\epsilon \left( \frac{1}{2} \mathcal{A}_\rho(f) + [\mathcal{A}_\rho]^+(f,\Delta f)\right) \nonumber\\
    &\ge \frac{e^{-\epsilon}}{2} \langle P \rho,\Gamma_\rho f \rangle_\pi - \mathcal{A}_\rho(f,\Delta f) - \frac{e^\epsilon}{2} \mathcal{A}_\rho(f) + (1-e^\epsilon)|\mathcal{A}_\rho|(f,\Delta f). \label{eq:Brhopsiest}
\end{align}
    Choosing $C = 2$ in Lemma \ref{lem:univlowbduseful} yields 
$$ \frac{1}{2} \langle P \rho,\Gamma_\rho f \rangle_\pi + \frac{8}{Q_{\rm{min}}} \mathcal{A}_\rho(f) - 2 \vert \mathcal{A}_\rho \vert(f,\Delta f) \ge 0. $$
and subtracting $L$-times this expression from \eqref{eq:Brhopsiest}, we obtain
    \begin{align*}
    \widetilde{\mathcal{B}}_\rho(f) &\ge\frac{1}{2}\left( e^{-\epsilon}-L\right) \langle P \rho,\Gamma_\rho f \rangle_\pi - \left( \frac{e^\epsilon}{2} + \frac{8L}{Q_{\rm{min}}} \right) \mathcal{A}_\rho(f) \\ &\quad- \mathcal{A}_\rho(f,\Delta f) - \left( e^{\epsilon}-1-2L \right) |\mathcal{A}_\rho|(f,\Delta f).
    \end{align*}
    By choosing $L=e^\epsilon - e^{-\epsilon}$, the coefficients of $\langle P \rho,\Gamma_\rho f \rangle_\pi$ and $\mathcal{A}_\rho(f,\Delta f)$ match up and since $e^{\epsilon}-1-2L \le 0$, we end up with
    \begin{align*} 
    \widetilde{\mathcal{B}}_\rho(\psi) &\ge \left( 2e^{-\epsilon} - e^\epsilon \right) \left( \frac{1}{2} \langle P \rho,\Gamma_\rho f \rangle_\pi - \mathcal{A}_\rho(f,\Delta f) \right) - \left( \frac{e^\epsilon}{2} + \frac{8\left( e^\epsilon - e^{-\epsilon}\right)}{Q_{\rm{min}}} \right) \mathcal{A}_\rho(f) \nonumber \\
    &= (2e^{-\epsilon}-e^\epsilon) \mathcal{B}_\rho(f) -\left( e^\epsilon-e^{-\epsilon} \right) \left( 1+ \frac{8}{Q_{\rm{min}}}\right) \mathcal{A}_\rho(f) \\
    &\ge \mathcal{B}_\rho(f) - \sinh(\epsilon)  \left( 3 [\mathcal{B}_\rho(f)]^+ + 2\left( 1 + \frac{8}{Q_{\rm{min}}} \right) \mathcal{A}_\rho(f) \right).
    \end{align*}
\end{proof}

\begin{definition} \label{def:Kdelta}
Let $(X,Q,\pi)$ be an irreducible, reversible finite Markov chain, $\theta$ be a mean, and $\delta > 0$. We define 
$$ 
K_\delta := \inf_{\text{$f$ not constant}}
\inf \left\{ 
\frac{\mathcal{B}_\rho(f)}{\mathcal{A}_\rho(f)}: \text{$\rho \in \mathcal{P}_*(X)$ with $| \nabla \log \rho | \le \delta$ on ${\rm{supp}} \nabla f$}  
\right\} 
$$
\end{definition}

\begin{remark}
  The requirement $|\nabla \log \rho| < \delta$ in the above definition has been proved useful in the discrete setting. It has been called the \emph{regularization trick} in \cite{STY-23}, and has been used there to compare the log-Sobolev constant with its modified counterpart (see Theorem 1 in \cite{STY-23}). Moreover, we have
  \begin{equation} \label{eq:Kdeltabounds}
     - \frac{1}{2} - \frac{2}{Q_{\min}} \le K_\theta(X,Q,\pi) \le K_\delta \le 2,     
  \end{equation}
  where the lower bound follows from 
  Theorem \ref{thm:univgenlowbondinfdim}, and the upper bound follows by choosing $\rho$ constant and $\psi_1$ the eigenfunction of $-\Delta$ to the first positive eigenvalue $\lambda \le 2$ (note that we have $\langle \rho, \Gamma_{2,\rho} \psi_1 \rangle_\pi = \lambda \langle \rho, \Gamma_\rho(\psi_1) \rangle_\pi$).
\end{remark}

\begin{theorem} \label{thm:perturb1}
Let $\epsilon \in (0,\frac{\rm{arcsinh(1/3)}}{4})$ and $\delta \in (0,\infty)$.
Let $(X,Q,\pi)$ and $(X, \widetilde Q,\widetilde \pi)$ be two irreducible, reversible finite Markov chains with the same underlying combinatorial graphs satisfying
$$ \left\Vert \log \frac{\widetilde Q}{Q} \right\Vert_\infty \le \epsilon \quad \text{and} \quad \left\Vert \log \frac{\widetilde \pi}{\pi} \right\Vert_\infty \le \epsilon, $$
where we define $\log(0/0) = 0$.
Assume we also have means $\theta, \widetilde \theta$ associated to $(X,Q,\pi)$ and $(X,\widetilde Q,\widetilde \pi)$, respectively, satisfying
\begin{equation} \label{eq:theta1-assum} 
\left| \log \frac{\partial_1 \widetilde \theta(s,1)}{ \partial_1 \theta(s,1)} \right| \le \epsilon  \quad \text{whenever} \quad s \in \left[ e^{-\delta},e^\delta \right]. 
\end{equation}
Then we have 
$$ \left| \widetilde K_\delta - K_\delta \right| \le \sinh(4\epsilon) \left( 17+ \frac{36}{Q_{\rm{min}}}\right) $$
where 
$$ Q_{\rm{min}} = \inf_{x \sim y}
\left\{ Q(x,y), \widetilde Q(x,y) \right\}. 
$$
\end{theorem}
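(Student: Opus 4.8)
The plan is to reduce the three hypotheses to the input required by Lemma~\ref{lem:Bdiffperturb} and then convert the resulting pointwise estimate relating $\mathcal B_\rho$ and $\widetilde{\mathcal B}_\rho$ into a comparison of the infima $K_\delta$ and $\widetilde K_\delta$. Fix a non-constant $f\in\mathbb R^X$ and a density $\rho\in\mathcal P_*(X)$ with $|\nabla\log\rho|\le\delta$ on ${\rm supp}\,\nabla f$; this admissibility condition involves neither $Q$, $\pi$ nor $\theta$, so the pair $(\rho,f)$ is admissible for $K_\delta$ and for $\widetilde K_\delta$ simultaneously. The only nontrivial point in transferring the hypotheses is to upgrade \eqref{eq:theta1-assum} to a bound on $\theta$ itself. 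Since a mean is homogeneous, $\partial_1\theta$ is homogeneous of degree $0$, and by symmetry $\partial_2\theta(r,s)=\partial_1\theta(s,r)$, so \eqref{eq:rhoasderiv} gives
\[
\theta(r,s)=\partial_1\theta(r/s,1)\,r+\partial_1\theta(s/r,1)\,s ,
\]
with both coefficients $\ge0$ by monotonicity, and the same identity holds for $\widetilde\theta$. Hence whenever $r/s\in[e^{-\delta},e^{\delta}]$ (so that $r/s$ and $s/r$ both lie in $[e^{-\delta},e^{\delta}]$), \eqref{eq:theta1-assum} forces $\widetilde\theta(r,s)\in[e^{-\epsilon},e^{\epsilon}]\,\theta(r,s)$ and, directly, $\partial_1\widetilde\theta(r,s)\in[e^{-\epsilon},e^{\epsilon}]\,\partial_1\theta(r,s)$.

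On ${\rm supp}\,\nabla f$ we have $\rho(x)/\rho(y)\in[e^{-\delta},e^{\delta}]$, so in the expression \eqref{eq:Arhoqxyz} for $\mathcal A_\rho(f)$, in the definition \eqref{eq:Arhopm} of $[\mathcal A_\rho]^\pm(f,\Delta f)$, and in the expansion $\langle P\rho,\Gamma_\rho f\rangle_\pi=\sum_{x,y,z}q_{xyz}\,\rho(z)\,\partial_1\theta(\rho_y,\rho_x)(f(y)-f(x))^2$ used in the proof of Lemma~\ref{lem:univlowbduseful}, every summand is a product of nonnegative factors carrying at most two $Q$-ratios, one $\pi$-ratio and one $\widehat\rho$- (resp.\ $\partial_1\theta$-) ratio between the two chains; therefore each summand is multiplied by a factor in $[e^{-4\epsilon},e^{4\epsilon}]$ on passing to the tilde quantities, and in $[\mathcal A_\rho]^\pm$ the sign of each summand depends only on $f$ and is unchanged. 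Summing nonnegative terms gives \eqref{eq:Aest}, \eqref{eq:Pest} and \eqref{eq:Apmest} with $\epsilon$ replaced by $4\epsilon$ (with the convention $\log0-\log0=0$), and also $e^{-4\epsilon}\mathcal A_\rho(f)\le\widetilde{\mathcal A}_\rho(f)\le e^{4\epsilon}\mathcal A_\rho(f)$. Now Lemma~\ref{lem:Bdiffperturb}, applied with $\epsilon$ replaced by $4\epsilon$, yields
\[
\widetilde{\mathcal B}_\rho(f)\ \ge\ \mathcal B_\rho(f)-\sinh(4\epsilon)\Big(3[\mathcal B_\rho(f)]^+ + \big(2+\tfrac{16}{Q_{\min}}\big)\mathcal A_\rho(f)\Big).
\]

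Dividing by $\mathcal A_\rho(f)>0$ and setting $t:=\mathcal B_\rho(f)/\mathcal A_\rho(f)\ge K_\delta$, then dividing by $\widetilde{\mathcal A}_\rho(f)$ (using the bound $e^{4\epsilon}\mathcal A_\rho(f)$ when $\widetilde{\mathcal B}_\rho(f)\ge0$ and $e^{-4\epsilon}\mathcal A_\rho(f)$ when $\widetilde{\mathcal B}_\rho(f)<0$), I would argue by cases on the signs of $\widetilde{\mathcal B}_\rho(f)$ and of $t$. When $\widetilde{\mathcal B}_\rho(f)\ge0$ and $t\le0$ the quotient is $\ge0\ge t\ge K_\delta$. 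In all other cases one uses $1-3\sinh(4\epsilon)>0$ — this is precisely where $\epsilon<\tfrac14{\rm arcsinh}(1/3)$ enters — to keep the coefficient of $\mathcal B_\rho(f)$ positive, together with the a priori bounds $-\tfrac12-\tfrac2{Q_{\min}}\le K_\delta\le2$ from \eqref{eq:Kdeltabounds} to absorb the terms proportional to $t$ and the denominator mismatch $e^{\pm4\epsilon}\le1+2\sinh(4\epsilon)$, and $\sinh^2(4\epsilon)\le\tfrac13\sinh(4\epsilon)$ to linearise. Collecting constants produces
\[
\frac{\widetilde{\mathcal B}_\rho(f)}{\widetilde{\mathcal A}_\rho(f)}\ \ge\ K_\delta-\sinh(4\epsilon)\Big(17+\tfrac{36}{Q_{\min}}\Big)
\]
for every admissible $(\rho,f)$, and taking the infimum over $f$ and $\rho$ gives $\widetilde K_\delta\ge K_\delta-\sinh(4\epsilon)(17+\tfrac{36}{Q_{\min}})$. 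The hypotheses, the admissibility condition and $Q_{\min}$ are symmetric under interchanging the two chains and their means, so the same reasoning gives $K_\delta\ge\widetilde K_\delta-\sinh(4\epsilon)(17+\tfrac{36}{Q_{\min}})$, and the two bounds together are the claim.

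I expect the main obstacle to be this last step: turning the pointwise inequality of Lemma~\ref{lem:Bdiffperturb} into a bound on the ratio $\widetilde{\mathcal B}_\rho(f)/\widetilde{\mathcal A}_\rho(f)$ forces one to keep track of the sign of $\widetilde{\mathcal B}_\rho(f)$ (which decides whether dividing by the larger or the smaller admissible value of $\widetilde{\mathcal A}_\rho(f)$ is safe), to control the $[\mathcal B_\rho(f)]^+$ term without any a priori upper bound on $\mathcal B_\rho(f)/\mathcal A_\rho(f)$, and to extract the explicit constants $17$ and $36/Q_{\min}$ from nothing more than the crude range $-\tfrac12-\tfrac2{Q_{\min}}\le K_\delta\le2$ and the smallness $\sinh(4\epsilon)<\tfrac13$. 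The transfer of hypotheses in the first step, by contrast, is routine once the identity $\theta(r,s)=\partial_1\theta(r/s,1)r+\partial_1\theta(s/r,1)s$ is in hand.
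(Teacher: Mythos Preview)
Your proposal is correct and follows essentially the same route as the paper: verify that the hypotheses force \eqref{eq:Aest}--\eqref{eq:Apmest} with $4\epsilon$, invoke Lemma~\ref{lem:Bdiffperturb}, and then push the resulting inequality through the bounds \eqref{eq:Kdeltabounds} and the $\mathcal A/\widetilde{\mathcal A}$ ratio to land on the constants $17$ and $36/Q_{\min}$, concluding by symmetry.

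The one place where your write-up diverges from the paper is the bookkeeping in the last step. You propose to divide by $\widetilde{\mathcal A}_\rho(f)$ and then split into cases on the sign of $\widetilde{\mathcal B}_\rho(f)$ in addition to the sign of $t$. The paper avoids the $\widetilde{\mathcal B}_\rho(f)$-sign split entirely: it keeps the inequality in the form $\widetilde{\mathcal B}_\rho(f)\ge C\,\mathcal A_\rho(f)$ and converts the $\mathcal A$ to $\widetilde{\mathcal A}$ via the single identity $K_\delta\mathcal A_\rho(f)\ge K_\delta\widetilde{\mathcal A}_\rho(f)-|K_\delta|\,|\widetilde{\mathcal A}_\rho(f)-\mathcal A_\rho(f)|$, together with $\mathcal A_\rho(f)\le e^{4\epsilon}\widetilde{\mathcal A}_\rho(f)$ on the (nonnegative) remainder and the crude $e^{4\epsilon}\le 2$. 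Only the sign of $\mathcal B_\rho(f)$ needs a case distinction, and the $\mathcal B_\rho(f)<0$ branch is strictly easier. This sidesteps exactly the obstacle you flagged: you never need an upper bound on $t$, because when $\mathcal B_\rho(f)\ge 0$ the combination $(1-3\sinh(4\epsilon))\mathcal B_\rho(f)$ is monotone in $\mathcal B_\rho(f)$ and can be bounded below by $(1-3\sinh(4\epsilon))K_\delta\mathcal A_\rho(f)$ directly. Your extra case split would also work, but is more laborious than necessary.
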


\begin{proof}
  Let $f \in \mathbb{R}^X$ be non-constant and assume that $\rho \in \mathcal{P}_*(X)$ satisfies
  $$ \left| \nabla \log \rho(x,y) \right| \le \delta \quad \text{for all $x \sim y$ with $f(y) \neq f(x)$.} $$
  Then we have for $x \sim y$ with $(x,y) \in {\rm{supp}} (\nabla f)$ that $s = \frac{\rho(y)}{\rho(x)} \in [e^{-\delta},e^\delta]$  and therefore
  $$ \partial_1 \theta(\rho_x,\rho_y) = \partial_1 \theta(1,s) $$
  and similarly for $\partial_1 \widetilde \theta$. By assumption \eqref{eq:theta1-assum} we conclude
  $$ \left| \log \frac{\partial_1 \widetilde \theta(\rho_x,\rho_y)}{ \partial_1 \theta(\rho_x,\rho_y)} \right| \le \epsilon  \quad \text{for all $x \sim y$ with $(x,y) \in {\rm{supp}} (\nabla f)$}. $$
  Moreover, we have for $q_{xyz} = Q(y,x)Q(y,z)\pi(y)$
  and $\widetilde q_{xyz} = \widetilde Q(y,x)\widetilde Q(y,z)\widetilde \pi(y)$
  $$ \left\vert \log \frac{\widetilde q_{xyz}}{q_{xyz}}  \right\vert \le 3 \epsilon. $$
  Using the expressions \eqref{eq:Arhoqxyz} and \eqref{eq:Arhopm} for $\mathcal{A}_\rho(f)$ and $[\mathcal{A}_\rho]^\pm(f,\Delta f)$ (and their tilde-analogues), respectively, as well as
  $$ \langle P \rho, \Gamma_\rho f \rangle_\pi = \sum_{x,y,z} q_{xyz}\, \rho_z\, \partial_1 \theta(\rho_y,\rho_x)\, (f(y)-f(x))^2
  $$ 
  (and its tilde-analogue), their positivity properties 
  and the relation \eqref{eq:rhoasderiv}: 
  $$ \widehat\rho_{uv} = \rho(u)\partial_1 \theta(\rho_u,\rho_v) + \rho(v) \partial_1 \theta(\rho_v,\rho_u), $$
  we see that the three main assumptions \eqref{eq:Aest}, \eqref{eq:Pest}  and \eqref{eq:Apmest} in Lemma \ref{lem:Bdiffperturb} are satisfied with $4 \epsilon$ instead of $\epsilon$. In particular, we have
  \begin{equation} \label{eq:AtildeAexpr}
  \frac{\left| \widetilde{\mathcal{A}}_\rho(f) - \mathcal{A}_\rho(f) \right|}{\widetilde{\mathcal{A}}_\rho(f)} = \left| 1 - \frac{\mathcal{A}_\rho(f)}{\widetilde{\mathcal{A}}_\rho(f)} \right| \le e^{4 \epsilon} - 1 \le 2 \sinh(4 \epsilon). 
  \end{equation}
  Note that 
  \begin{equation} \label{eq:KdeltaB} 
  K_\delta \mathcal{A}_\rho(f) \le \mathcal{B}_\delta(f) \quad \text{and} \quad  K_\delta \le 2, 
  \end{equation}
  where the upper bound on $K_\delta$ 
  follows by choosing $\rho$ constant and $\psi_1$ the eigenfunction of $-\Delta$ to the first positive eigenvalue $\lambda \le 2$ (note that $\psi_1$ is not constant and we have $\langle \rho, \Gamma_{2,\rho} \psi_1 \rangle_\pi = \lambda \langle \rho, \Gamma_\rho(\psi_1) \rangle_\pi$).
  The restriction on $\epsilon > 0$ yields $e^{4 \epsilon}\le 2$, and we obtain, together with Lemma \ref{lem:Bdiffperturb}, \eqref{eq:Kdeltabounds}, \eqref{eq:AtildeAexpr} and \eqref{eq:KdeltaB} in the case $\mathcal{B}_\rho(f) \ge 0$,
  \begin{align*}
  \widetilde{\mathcal{B}}_\rho(f) &\ge \underbrace{\left( 1-3\sinh(4\epsilon) \right)}_{>0} \mathcal{B}_\rho(f) - \sinh(4\epsilon)\left( 2 + \frac{16}{Q_{\rm{min}}} \right) \mathcal{A}_\rho(f) \\ 
  &\ge \left( K_\delta - \sinh(4\epsilon)\left( 3 K_\delta + 2 + \frac{16}{Q_{\rm{min}}} \right)\right) \mathcal{A}_\rho(f) \\
  & \ge K_\delta \widetilde {\mathcal{A}}_\rho(f) - |K_\delta|\cdot \left| \widetilde{\mathcal{A}}_\rho(f)-\mathcal{A}_\rho(f) \right| 
  - e^{4 \epsilon} \sinh(4 \epsilon) \left( 8 + \frac{16}{Q_{\min}} \right) \widetilde{\mathcal{A}}_\rho(f) \\
  &\ge \left( K_\delta - \left( \frac{1}{2} + \frac{2}{Q_{\min}}  \right) 2 \sinh(4\epsilon) - 2 \sinh(4\epsilon) \left( 8 + \frac{16}{Q_{\min}} \right) \right) \widetilde{\mathcal{A}}_\rho(f) \\
  &= \left( K_\delta - \sinh(4\epsilon) \left( 17 + \frac{36}{Q_{\min}}\right) \right) \widetilde{\mathcal{A}}_\rho(f).
  \end{align*}
  It is easy to see that the case $\mathcal{B}_\rho(f) < 0$ leads to the even better lower estimate
  $$ \widetilde{\mathcal{B}}_\rho(f) \ge \left( K_\delta - \sinh(4\epsilon) \left( 5 + \frac{36}{Q_{\min}}\right)\right) \widetilde{\mathcal{A}}_\rho(f). $$
  Hence we obtain
  $$ \widetilde K_\delta - K_\delta \ge - \sinh(4\epsilon) \left( 17 + \frac{36}{Q_{\rm{min}}} \right). $$
  Since this inequality holds also by interchanging $K_\delta$ and $\widetilde K_\delta$, the claim of the theorem follows.
\end{proof}

The final theorem in this section states a perturbation result in the setting of a connected finite weighted graph $G=(V,E)$ with vertex set $V$, edge set $E$, vertex measure $m: V \to (0,\infty)$ and symmetric edge weights $w: V \times V \to [0,\infty)$. We refer to such a graph as the $4$-tuple $(V,E,m,w)$. Assuming 
\begin{equation} \label{eq:mcond}
m(x) \ge \sum_y w_{xy},
\end{equation}
such a $4$-tuple represents a reversible Markov chain $(X,Q,\pi)$ with $X = V$, $Q(x,y) = \frac{w_{xy}}{m(x)}$ for $x \neq y$ and $Q(x,x) = 1 - \sum_y Q(x,y) \ge 0$.

\begin{theorem} \label{thm:perturb2}
Let $G=(V,E)$ be a connected finite graph with two choices of vertex measures $m, \widetilde m: B \to (0,1]$ and symmetric edge weights $w, \widetilde w: V \times V \to [0,1]$ satisfying \eqref{eq:mcond}. Let $(X,Q,\pi)$ and $(X,\widetilde Q,\widetilde \pi)$ be the Markov chains associated to $(V,E,m,w)$ and $(V,E,\widetilde m,\widetilde w)$, respectively.  
Assume there exists $\epsilon > 0$ such that
$$ \sup_{x \sim y} \left\vert \log \frac{\widetilde w_{xy}}{w_{xy}} \right \vert \le \epsilon \quad \text{and} \quad \sup_{x \in X} \left\vert \log \frac{\widetilde m(x)}{m(x)} \right\vert \le \epsilon. $$
Let $\theta$ be a mean. Then we have 
$$ | K_\theta(X,Q,\pi)) - K_\theta(\widetilde X,\widetilde Q,\widetilde \pi) | \le 27 \epsilon \left( 1 + \frac{8}{Q_{\rm{min}}} \right), $$
where 
$$Q_{\rm{min}} := \inf_{x \sim y} \min\{ Q(x,y),\widetilde Q(x,y)\}.$$
\end{theorem}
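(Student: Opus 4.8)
The plan is to reduce everything to Lemma~\ref{lem:Bdiffperturb} and then essentially repeat the endgame of the proof of Theorem~\ref{thm:perturb1}, exploiting the crucial simplification that both Markov chains here carry the \emph{same} mean $\theta$. First I would rewrite the quantities controlled by Lemma~\ref{lem:Bdiffperturb} in terms of the triple products $q_{xyz}=Q(y,x)Q(y,z)\pi(y)$ of \eqref{eq:qxyz}. Since $q_{xyz}=\dfrac{w_{yx}w_{yz}}{m(y)\sum_u m(u)}$ and $\log\dfrac{\sum_u\widetilde m(u)}{\sum_u m(u)}$ lies between $-\epsilon$ and $\epsilon$ (it is the logarithm of a weighted average of the ratios $\widetilde m(u)/m(u)$), the hypotheses give $\bigl|\log(\widetilde q_{xyz}/q_{xyz})\bigr|\le 4\epsilon$ for all $x,y,z\in X$. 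Now observe that, because $\widetilde\theta=\theta$ and the density $\rho\in\mathcal P_*(X)$ is common to both chains, the factors $\partial_1\theta(\rho_x,\rho_y)$ and $\widehat\rho_{xy}=\theta(\rho_x,\rho_y)$ occurring in $\mathcal A_\rho(f)$ (see \eqref{eq:Arhoqxyz}), in $\langle P\rho,\Gamma_\rho f\rangle_\pi=\sum_{x,y,z}q_{xyz}\,\rho_z\,\partial_1\theta(\rho_y,\rho_x)(f(y)-f(x))^2$, and in $[\mathcal A_\rho]^\pm(f,\Delta f)$ (see \eqref{eq:Arhopm}) are literally identical for the two chains; only the non-negative weights $q_{xyz}$ differ. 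As all summands of these three expressions are non-negative (and in \eqref{eq:Arhopm} the $+$- and $-$-parts are supported on the same index sets for both chains, since $q_{xyz}\ge0$ and $\widehat\rho_{xy}+\widehat\rho_{zy}\ge0$), each of $\mathcal A_\rho(f)$, $\langle P\rho,\Gamma_\rho f\rangle_\pi$ and $[\mathcal A_\rho]^\pm(f,\Delta f)$ changes by at most a factor $e^{\pm 4\epsilon}$. Hence hypotheses \eqref{eq:Aest}, \eqref{eq:Pest} and \eqref{eq:Apmest} of Lemma~\ref{lem:Bdiffperturb} hold with $4\epsilon$ in place of $\epsilon$ — and, unlike in Theorem~\ref{thm:perturb1}, for \emph{every} $\rho\in\mathcal P_*(X)$ with no bound on $\nabla\log\rho$ required: equality of the means makes hypothesis \eqref{eq:theta1-assum} unnecessary, which is precisely what lets us compare the unrestricted $\theta$-curvatures $K_\theta$ rather than the $\delta$-restricted $K_\delta$.

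It then remains to run the computation of the proof of Theorem~\ref{thm:perturb1}, in two regimes of $\epsilon$. If $\epsilon$ lies below a threshold $\epsilon_0$ with $3\sinh(4\epsilon_0)<1$ and $e^{4\epsilon_0}\le2$ (one may take $\epsilon_0=\tfrac14\operatorname{arcsinh}(1/3)$), then for a non-constant $f$ and any $\rho\in\mathcal P_*(X)$ one combines Lemma~\ref{lem:Bdiffperturb} (with $4\epsilon$), the inequality $K_\theta\,\mathcal A_\rho(f)\le\mathcal B_\rho(f)$, the Lichnerowicz bound $K_\theta\le2$ (take $\rho$ constant and $f$ a first eigenfunction of $-\Delta$, whose eigenvalue is $\le2$), the universal lower bound $K_\theta\ge-\tfrac12-\tfrac2{Q_{\min}}$ of Theorem~\ref{thm:univgenlowbondinfdim}, and the conversion $\bigl|1-\mathcal A_\rho(f)/\widetilde{\mathcal A}_\rho(f)\bigr|\le2\sinh(4\epsilon)$, distinguishing the cases $\mathcal B_\rho(f)\ge0$ and $\mathcal B_\rho(f)<0$ exactly as there, to obtain $\widetilde{\mathcal B}_\rho(f)\ge\bigl(K_\theta-\sinh(4\epsilon)(17+\tfrac{36}{Q_{\min}})\bigr)\widetilde{\mathcal A}_\rho(f)$; passing to the infimum over $\rho,f$ and symmetrizing gives $|K_\theta-\widetilde K_\theta|\le\sinh(4\epsilon)(17+\tfrac{36}{Q_{\min}})$, and since $\sinh$ is convex one has $\sinh(4\epsilon)\le\tfrac{1/3}{\operatorname{arcsinh}(1/3)}\,4\epsilon$ on $[0,\epsilon_0]$, so (using $Q_{\min}\le1$) this is $\le 27\epsilon(1+\tfrac8{Q_{\min}})$. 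If instead $\epsilon\ge\epsilon_0$, the claimed inequality is already implied by the trivial estimate $|K_\theta-\widetilde K_\theta|\le\tfrac52+\tfrac2{Q_{\min}}$, valid because $-\tfrac12-\tfrac2{Q_{\min}}\le K_\theta\le2$ for both chains, together with $\tfrac52+\tfrac2{Q_{\min}}\le27\epsilon_0(1+\tfrac8{Q_{\min}})\le27\epsilon(1+\tfrac8{Q_{\min}})$.

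The conceptual content is all in the first paragraph — the point that equal means kill the $\delta$-restriction, turning a $K_\delta$-comparison into a $K_\theta$-comparison. I expect the only real obstacle to be the constant-chasing in the second paragraph: choosing $\epsilon_0$ so that the small-$\epsilon$ estimate and the large-$\epsilon$ trivial estimate overlap and each stays below $27\epsilon(1+\tfrac8{Q_{\min}})$ uniformly in $Q_{\min}\le1$, which is a short numerical check rather than a new idea.
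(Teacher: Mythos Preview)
Your reduction to Lemma~\ref{lem:Bdiffperturb} via a termwise comparison of $q_{xyz}$'s is the right idea, and your observation that equal means remove the $\delta$-restriction on $\rho$ is exactly the point of this theorem. However, there is a genuine gap in your verification of hypothesis~\eqref{eq:Pest}. The identity $q_{xyz}=\dfrac{w_{yx}w_{yz}}{m(y)\sum_u m(u)}$ is only valid when both $x\neq y$ and $z\neq y$; on the diagonal one has $q_{xyy}=Q(y,x)Q(y,y)\pi(y)$, and the laziness $Q(y,y)=1-\sum_{z\sim y}w_{yz}/m(y)$ is \emph{not} controlled in log by your hypotheses. Indeed, under the theorem's assumptions one may have $Q(y,y)=0$ while $\widetilde Q(y,y)>0$ (take $m(y)=\sum_{z\sim y}w_{yz}$ exactly and perturb). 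The diagonal summands do contribute to $\langle P\rho,\Gamma_\rho f\rangle_\pi=\sum_{x,y,z}q_{xyz}\rho_z\partial_1\theta(\rho_y,\rho_x)(f(y)-f(x))^2$ (the $z=y$ term is $Q(y,y)\rho_y\cdot Q(y,x)\pi(y)\partial_1\theta(\rho_y,\rho_x)(f(y)-f(x))^2$), so your $4\epsilon$ bound on the log-ratio is not justified as written.

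This is precisely why the paper's proof begins by rescaling the vertex measures so that $m(x)\ge 2\sum_{y\sim x}w_{xy}$ (hence $Q(y,y)\ge\tfrac12$) and then proves the elementary estimate $\bigl|\log\tfrac{\widetilde Q(y,y)}{Q(y,y)}\bigr|\le 2\epsilon$ via $\tfrac{1-A}{1-\widetilde A}\le 2-e^{-2\epsilon}$; only then can all $q_{xyz}$-ratios be bounded uniformly. Once you insert this step, your route through Lemma~\ref{lem:Bdiffperturb} and the endgame of Theorem~\ref{thm:perturb1} does go through. The paper instead bypasses Lemma~\ref{lem:Bdiffperturb} and works directly from the chain of inequalities $\tfrac{e^{3\epsilon}}{2}\langle P\rho,\Gamma_\rho f\rangle_\pi\ge\tfrac12\langle\widetilde P\rho,\widetilde\Gamma_\rho f\rangle_{\widetilde\pi}\ge(\widetilde K+\tfrac12)\widetilde{\mathcal A}_\rho(f)+\widetilde{\mathcal A}_\rho(f,\widetilde\Delta f)$, adds a multiple $t$ of the inequality of Lemma~\ref{lem:univlowbduseful} with $C=2$, and optimizes $t$; this yields the sharper preliminary bound $27\epsilon(\tfrac12+\tfrac{2}{Q_{\min}})+O(\epsilon^2)$, with the $O(\epsilon^2)$ absorbed by a local-Lipschitz argument and the final constant $27\epsilon(1+\tfrac{8}{Q_{\min}})$ arising when the rescaling of $m$ is undone.
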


\begin{proof}
    We first assume that $m(x) \ge 2 \sum_{y \sim x} w_{xy}$ and $\widetilde m(x) \ge 2 \sum_{y \sim x} \widetilde w_{xy}$ (this can always be achieved by rescaling the vertex measures $m$ and $\widetilde m$ by the same multiplicative factor $\ge 1$). 
    Note that we have
    $$ \sup_{x \sim y} \left\vert \log \frac{\widetilde Q_{xy}}{Q_{xy}} \right \vert \le 2\epsilon. $$
    For the estimate on the diagonal, we define $A = \sum_{y \sim x} Q(x,y) \le 1/2$ and $\widetilde A = \sum_{y \sim x} \widetilde Q(x,y) \le 1/2$ and use $A \ge e^{-2\epsilon} \widetilde A$ to obtain
    $$
    \frac{1-A}{1-\widetilde A} \le \frac{1-e^{-2\epsilon}\widetilde A}{1-\widetilde A} = 1 + \frac{\widetilde A - e^{-2\epsilon}\widetilde A}{1-\widetilde A} \le 1 + 2(1-e^{-2\epsilon})\widetilde A \le 2 - e^{-2\epsilon}.
    $$
    This implies
    $$ \log\left( \frac{Q_{xx}}{\widetilde Q_{xx}} \right) = \log\left( \frac{1-A}{1-\widetilde A} \right) \le \log\left( 2-e^{-2\epsilon} \right) \le 2 \epsilon, $$
    which shows that we also have
    $$ \sup_{x \in X} \left\vert \log \frac{\widetilde Q_{xx}}{Q_{xx}}\right\vert \le 2\epsilon. $$
    Note that the steady states are obtained from the vertex measures via normalization:
    $$ \pi(x) = \frac{m(x)}{\sum_y m(y)} \quad \text{and} \quad \widetilde \pi(x) = \frac{\widetilde m(x)}{\sum_y \widetilde m(y)}. $$
    Therefore, we have both
    $$ \left\Vert \log \frac{\widetilde Q}{Q} \right\Vert_\infty \le 2 \epsilon \quad \text{and} \quad \left\Vert \log \frac{\widetilde \pi}{\pi} \right\Vert_\infty \le 2 \epsilon, $$
    where we define $\log(0/0) = 0$. 
    This implies for any $x,y,z$ that
    $$ \left\vert \log \frac{\widetilde q_{xyz}}{q_{xyz}} \right\vert \le 3 \epsilon. $$
    Similarly as before, we can therefore assume that \eqref{eq:Aest}, \eqref{eq:Pest} and \eqref{eq:Apmest} hold with  $3\epsilon$ instead of $\epsilon$.
    Let $f \in \mathbb{R}^X$, $\rho \in \mathcal{P}_*$, and $\widetilde K = K_\theta(\widetilde X,\widetilde Q,\widetilde \pi)$.
    Then 
    \begin{align*}
    \frac{1}{2} e^{3\epsilon} \langle P \rho, \Gamma_\rho f \rangle_\pi &\ge \frac{1}{2} \langle \widetilde P \rho, \widetilde \Gamma_\rho f \rangle_{\widetilde \pi} \\ 
    &\ge \left( \widetilde K + \frac{1}{2}\right) \widetilde{\mathcal{A}}_\rho(f) + \widetilde{\mathcal{A}}_\rho(f,\widetilde \Delta f) \\
    &\ge e^{-3\epsilon}\left( \left[ \widetilde K + \frac{1}{2}\right]^+ \mathcal{A}_\rho(f) + [\mathcal{A}_\rho]^+(f, \Delta f) \right) \\ &\quad- e^{3 \epsilon} \left( \left[ \widetilde K + \frac{1}{2}\right]^- \mathcal{A}_\rho(f) + [\mathcal{A}_\rho]^-(f,\Delta f) \right)\\
    &\ge \left( \left( \widetilde K + \frac{1}{2} \right) - \left\vert \widetilde K + \frac{1}{2} \right\vert \left( e^{3\epsilon}-1 \right) \right) \mathcal{A}_\rho(f) \\
    &\quad + \mathcal{A}_\rho(f,\Delta f) - (e^{3 \epsilon}-1) \vert\mathcal{A}_\rho\vert(f,\Delta f). 
    \end{align*}
    Adding this inequality to $t > 0$ times the inequality \eqref{eq:infcurvlowbd} with the choice $C=2$, we obtain
    \begin{multline*}
    \frac{1}{2} \left( t + e^{3 \epsilon} \right) \langle P \rho, \Gamma_\rho f \rangle_\pi \ge \left( \left( \widetilde K + \frac{1}{2} \right) - \left\vert \widetilde K + \frac{1}{2} \right\vert \left( e^{3\epsilon}-1 \right)
    - \frac{8t}{Q_{\rm{min}}} \right) \mathcal{A}_\rho(f) \\ + \left( 2 t + 1 - e^{3 \epsilon}\right) |\mathcal{A}_\rho|(f,\Delta f) + \mathcal{A}_\rho(f,\Delta f).
    \end{multline*}
    We choose $t = 2e^{3\epsilon}-2$. This choice minimizes $8t$ while satisfying the conditions
    \begin{align*}
    2t+1-e^{3\epsilon} &\ge 0, \\
    t + e^{3 \epsilon} &\le 2t + 1 - e^{3 \epsilon} +1.
    \end{align*}
    Plugging this into the inequality and dividing by $3e^{3 \epsilon}-2$, we obtain
    \begin{equation*}
\frac{1}{2} \langle P \rho, \Gamma_\rho f \rangle_\pi \ge \frac{1}{3e^{3\epsilon} - 2} \left( \left( \widetilde K + \frac{1}{2} \right) - \left\vert \widetilde K + \frac{1}{2} \right\vert \left( e^{3\epsilon}-1 \right)
    - \frac{16(e^{3\epsilon}-1)}{Q_{\rm{min}}} \right) \mathcal{A}_\rho(f) + \mathcal{A}_\rho(f,\Delta f).
    \end{equation*}
    Hence, setting $K = K_\theta(X,Q,\pi)$,
    \begin{align*} 
    K + \frac{1}{2} &\ge \frac{1}{3e^{3\epsilon} - 2} \left( \left( \widetilde K + \frac{1}{2} \right) - \left\vert \widetilde K + \frac{1}{2} \right\vert \left( e^{3\epsilon}-1 \right)
    - \frac{16(e^{3\epsilon}-1)}{Q_{\rm{min}}} \right) \\
    &= O(\epsilon^2) + (1-9\epsilon) \left( \left( \widetilde K + \frac{1}{2} \right) - 3 \epsilon \left\vert \widetilde K + \frac{1}{2} \right\vert  - \frac{48 \epsilon}{Q_{\rm{min}}} \right) \\
    &= O(\epsilon^2) + \left( \widetilde K + \frac{1}{2} \right)
    - 3 \epsilon \left( 3 \left(\widetilde K + \frac{1}{2} \right) + \left\vert \widetilde K + \frac{1}{2} \right\vert + \frac{16}{Q_{\rm{min}}} \right). 
    \end{align*}
    Using 
    $$ - \frac{1}{2} - \frac{2}{Q_{\rm{min}}} \le \widetilde K \le 1, $$
    where the lower bound follows from Theorem \ref{thm:univgenlowbondfindim}, and the upper bound follows from the fact that $\sum_y Q(x,y) \le 1/2$ and by choosing $\rho$ constant and $\psi_1$ the eigenfunction of $-\Delta$ to the first positive eigenvalue $\lambda \le 2$,
    we obtain
    $$ K \ge  \widetilde K - 27 \epsilon \left( \frac{1}{2} + \frac{2}{Q_{\rm{min}}} \right) + O(\epsilon^2). $$
    Reversing the roles of $K$ and $\widetilde K$ yields
    $$ | K - \widetilde K | \le 27 \epsilon \left( \frac{1}{2} + \frac{2}{Q_{\rm{min}}} \right) + O(\epsilon^2), $$
    where the $O(\epsilon^2)$-term depends only on $Q_{\rm{min}}$. This term disappears due to local Lipschitz continuity of the curvature which we have just proven.

    Relaxing the initial condition to $m(x) \ge \sum_{y \sim x} w_{xy}$ (and similarly for $\widetilde m(x)$), rescaling shows that we end up with the bound
    $$ | K - \widetilde K | \le 27 \epsilon \left( 1 + \frac{8}{Q_{\rm{min}}} \right). $$
\end{proof}

\section{Examples}
\label{sec:examples}

In this section, we present examples of finite Markov chains $(X,Q,\pi)$ with very different behaviours with regards to their entropic curvatures and Bakry-\'Emery and Ollivier Ricci curvatures. To simplify the presentation, we drop the requirement of $\pi$ and $\rho$ being normalised. Note, however, that we require $\sum_{y} Q(x,y) =1$ for our transition probabilities. We also present comparisons with Ollivier's sectional curvature \cite{Oll-09} (see also \cite{Mu-23}). Let us briefly recall Ollivier curvature notions. 

\begin{definition} \label{def:OllRicsec}
Let $(X,Q,\pi)$ finite Markov chain with underlying combinatorial graph $G = (V,E)$ and combinatorial distance $d: V \times V \to \mathbb{N} \cup \{0\}$. 

The \emph{Ollivier Ricci curvature} of $(X,Q,\pi)$ (see \cite{Oll-09}) is defined as follows on the edges $x \sim y$ of $G$:
$$ K_{ORC}(x,y) = 1 - W_1(Q(x,\cdot),Q(y,\cdot)), $$
where $W_1$ is the $L^1$-Wasserstein distance for probability measures on $X$, defined as
$$W_1(\mu,\nu):=\inf_{\gamma:X\times X\to [0,1]}\left\{ \sum_{u,v\in V} d(u,v)\gamma(u,v)\, \middle| \, \sum_{v}\gamma(u,v)=\mu(u) \, \text{ and } \, \sum_{u}\gamma(u,v)=\nu(v) \right\}.$$

The \emph{Ollivier sectional curvature} of $(X,Q,\pi)$  (see \cite[Problem P]{Oll-10}) is defined as follows on the edges $x \sim y$ of $G$:
$$ K_{SEC}(x,y) = 1 - W_\infty(Q(x,\cdot),Q(y,\cdot)), $$
where $W_\infty$ is the $L^\infty$-Wasserstein distance for probability measures on $X$, defined by
$$ W_\infty(\mu,\nu) = \inf_{\pi_{\mu,\nu}} \sup_{\substack{x,y: \\ \pi_{\mu,\nu}(x,y) > 0}} 
d(x,y), $$
where the infimum runs over all transport plans $\pi_{\mu,\nu}$ from the probability measure $\mu$ to the probability measure $\nu$. It is not difficult to see that, assuming laziness $\ge 1/2$, this sectional curvature only assumes the values $-1,0$. We then say that Ollivier sectional curvature of $(X,Q,\pi)$ is negative if $K_{SEC}(x,y)=-1$ on some edge $x\sim y$, and nonnegative if $K_{SEC}=0$ for all edges.
\end{definition}

Ollivier Ricci curvature $K_{ORC}(x,y)$ can also be computed alternatively via the following formula from \cite[Theorem 2.1]{MW-19}:
\begin{equation} \label{eq:OR_formula}
    K_{ORC}(x,y)=\inf_{\substack{f\in Lip(1) \\ f(y)-f(x)=1}} (\Delta f(x)-\Delta f(y)).
\end{equation}

\begin{remark}
Note that we always have
$$ K_{ORC}(x,y) \ge K_{SEC}(x,y) $$
since $W_1 \le W_\infty$.
\end{remark}

To make the notation easier to read and cohesive with other curvature notions, in this section we will write $K_{BE}$ for the Bakry-\'Emery curvature for $\infty$-dimension.

A Markov chain $(X,Q,\pi)$ with $X=\{0,1,\ldots,n-1\}$ is a \emph{birth-death process} if
\begin{equation*}
    Q(i,j)=0 \quad \text{whenever} \quad |j-i|>1
\end{equation*}
Equivalently, its associated weighted graph is supported on an $n$-point path.
For abbreviation, we write $Q^\pm(r) = Q(r,r\pm 1)$ (and by convention, $Q^\pm(r) =0$ in case $r \pm 1$ is no longer a vertex).

We recall the formulae for computing Ollivier Ricci curvature and Bakry-\'Emery curvature of birth-death processes, which were given in \cite[Theorem 2.10]{MW-19} and \cite[Section 2.1]{HM-23}, respectively. We remark further that these formulae are also applicable for weighted cycle graphs of length at least $6$.

\begin{theorem}[{\cite[Theorem 2.10]{MW-19}}] \label{thm:OL_line_formula}
Let $(X,Q,\pi)$ be a birth-death chain. Suppose $Q(x,x) \ge \frac{1}{2}$ for all vertices $x\in X$.
Then Ollivier Ricci curvature on the edge $(n,n+1)$ is given by
$$ K_{ORC}(n,n+1) = Q^+(n) + Q^-(n+1) - Q^+(n+1) - Q^-(n), $$
\end{theorem}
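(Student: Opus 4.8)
The plan is to apply the Kantorovich-type identity \eqref{eq:OR_formula} to the edge $x=n$, $y=n+1$ and to reduce the computation of $K_{ORC}(n,n+1)$ to an elementary optimisation over $1$-Lipschitz functions. The key structural point is that, for a birth-death chain as well as for a weighted cycle of length at least $6$, the vertices $n-1,n,n+1,n+2$ are pairwise distinct and both $n$ and $n+1$ have exactly two neighbours, so that $\Delta f(n)$ and $\Delta f(n+1)$ only involve the values of $f$ on these four vertices (with the usual convention that a term is dropped and the corresponding $Q^-(n)$ or $Q^+(n+1)$ vanishes when $n-1$ or $n+2$ fails to be a vertex).

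Concretely, for any $f\in\mathbb{R}^X$,
\begin{align*}
\Delta f(n)-\Delta f(n+1)&=Q^+(n)\bigl(f(n+1)-f(n)\bigr)+Q^-(n)\bigl(f(n-1)-f(n)\bigr)\\
&\quad -Q^+(n+1)\bigl(f(n+2)-f(n+1)\bigr)-Q^-(n+1)\bigl(f(n)-f(n+1)\bigr).
\end{align*}
Imposing the constraint $f(n+1)-f(n)=1$ from \eqref{eq:OR_formula}, the two terms along the edge contribute exactly $Q^+(n)+Q^-(n+1)$; for the two remaining terms I would use that $f$ is $1$-Lipschitz, so $f(n-1)-f(n)\ge-1$ and $f(n+2)-f(n+1)\le 1$, together with $Q^-(n),Q^+(n+1)\ge0$, to obtain
\[
\Delta f(n)-\Delta f(n+1)\ \ge\ Q^+(n)+Q^-(n+1)-Q^-(n)-Q^+(n+1).
\]
Taking the infimum over all admissible $f$ shows that $K_{ORC}(n,n+1)$ is at least the right-hand side of the asserted identity.

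For the matching upper bound I would exhibit a minimiser, namely a $1$-Lipschitz $f$ with $f(n-1)-f(n)=-1$, $f(n+1)-f(n)=1$ and $f(n+2)-f(n+1)=1$, which turns the displayed inequality into an equality. On a path one may simply take $f(k)=k$. On a cycle of length $m\ge 6$ I would let $f$ increase by $1$ along the arc $n-1,n,n+1,n+2$ and then descend back to its starting value along the complementary arc; since this arc has $m-3\ge 3$ edges, there is enough room to absorb the height difference $3$ while keeping $f$ globally $1$-Lipschitz (e.g.\ decreasing by $1$ on three of these edges and holding $f$ constant on the rest). The theorem then follows from \eqref{eq:OR_formula}. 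The two points requiring genuine care are the applicability of \eqref{eq:OR_formula}, where the laziness assumption $Q(x,x)\ge\frac12$ enters, and---above all---the construction of the extremal $1$-Lipschitz function on cycles, where the hypothesis that the length is at least $6$ is precisely what allows a global $1$-Lipschitz extension over the complementary arc; everything else is routine algebra.
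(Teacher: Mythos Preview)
The paper does not give its own proof of this statement; it is quoted verbatim from \cite[Theorem 2.10]{MW-19} as a known formula and used later as a black box. Your argument is correct: reducing via the dual formula \eqref{eq:OR_formula} to an elementary minimisation over $1$-Lipschitz $f$, bounding the two free increments $f(n-1)-f(n)$ and $f(n+2)-f(n+1)$ by $\pm1$, and exhibiting the identity function (respectively a tent function on the cycle) as the minimiser is exactly the right computation. Your remark that the laziness hypothesis $Q(x,x)\ge\tfrac12$ is what makes \eqref{eq:OR_formula} coincide with the Wasserstein definition $K_{ORC}=1-W_1$ of Definition~\ref{def:OllRicsec} is also on point; without laziness the constrained infimum in \eqref{eq:OR_formula} can strictly exceed $1-W_1$, so this is indeed the place where the assumption is used. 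Your check that a cycle of length $m\ge 6$ has $m-3\ge 3$ complementary edges to absorb the height difference $3$ is the correct justification for the remark preceding Theorem~\ref{thm:OL_line_formula} about applicability to cycles.
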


\begin{theorem}[{\cite[Section 2.1]{HM-23}}] \label{thm:BE_line_formula}
Let $(X,Q,\pi)$ be a birth-death chain. Let $n\in X$. Denote
\begin{align*}
    W_-(n) &:=-Q^-(n-1)+3Q^+(n-1)+Q^-(n)-Q^+(n), \text{ and} \\
    W_+(n) &:=-Q^+(n+1)+3Q^-(n+1)+Q^+(n)-Q^-(n).
\end{align*}
Then the Bakry-\'Emery curvature at the vertex $n$ can be computed as 
\begin{enumerate}
    \item If $n-1\not\in X$, then $K_{BE}(n)=W_+(n)$.
    \item If $n+1\not\in X$, then $K_{BE}(n)=W_-(n)$.
    \item Otherwise, $K_{BE}(n)=\max K$ such that
    \begin{equation} \label{eq: BE_line_formula}
    W_-(n) \ge 2K, \text{ and } \left(W_-(n)-2K\right)\left(W_+(n)-2K\right) \ge 4Q^-(n)Q^+(n).
\end{equation}
\end{enumerate}
\end{theorem}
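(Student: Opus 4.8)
The plan is to unfold the defining local Bochner inequality $\Gamma_2 f(n)\ge K\,\Gamma f(n)$ for all $f\in\mathbb{R}^X$ (with $\theta=\theta_a$, so that $\Gamma,\Gamma_2$ are the classical operators appearing in Definition~\ref{def:BEcurv}) into an explicit finite-dimensional quadratic inequality, and then to read off $K_{BE}(n)$ as the largest $K$ making a binary — or, at an endpoint of the path, a unary — quadratic form positive semidefinite. Since on a path $\Gamma_2 f(n)$ involves only $f(n-2),\dots,f(n+2)$, and since $\Gamma$ and $\Gamma_2$ see $f$ only through differences, I would normalise $f(n)=0$ and set $a=f(n+1)$, $b=f(n-1)$, $c=f(n+2)$, $d=f(n-2)$; then $\Gamma f(n)=\frac{1}{2}\left(Q^+(n)a^2+Q^-(n)b^2\right)$ already has the desired shape.

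Next I would assemble $\Gamma_2 f(n)=\frac{1}{2}\left(\Delta\Gamma f(n)-2\Gamma(f,\Delta f)(n)\right)$ explicitly as a quadratic polynomial in $a,b,c,d$, using the formulas for $\Delta f$ and $\Gamma f$ at $n$ and $n\pm1$. The structural point is that $c$ enters $\Gamma_2 f(n)$ only through the edge $\{n+1,n+2\}$ (via $\Gamma f(n+1)$ inside $\Delta\Gamma f(n)$ and via $\Delta f(n+1)$ inside $\Gamma(f,\Delta f)(n)$) and then only coupled to $a$, while $d$ enters only through $\{n-1,n-2\}$ and only coupled to $b$, with no $cd$-term present. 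Hence $\Gamma_2 f(n)$ may be minimised over $c$ and over $d$ independently by completing the square; this is exactly the step that produces the ``$3Q^+(n-1)$'' and ``$3Q^-(n+1)$'' contributions, and it yields a reduced form $\widetilde\Gamma_2(a,b)\le\Gamma_2 f(n)$ — with equality at the minimising $c,d$ — whose coefficients regroup into a constant multiple of $Q^+(n)W_+(n)\,a^2+Q^-(n)W_-(n)\,b^2$ together with a cross term proportional to $Q^+(n)Q^-(n)\,ab$.

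Since $K_{BE}(n)=\inf\{\widetilde\Gamma_2(a,b)/\Gamma f(n):(a,b)\neq 0\}$, it remains to find the largest $K$ with $\widetilde\Gamma_2(a,b)-K\,\Gamma f(n)\ge 0$ for all $(a,b)$. The $a^2$- and $b^2$-coefficients of this form are then proportional to $W_+(n)-2K$ and $W_-(n)-2K$, and the classical criterion for $\alpha u^2+\gamma v^2+2\beta uv$ to be positive semidefinite (namely $\alpha,\gamma\ge 0$ and $\alpha\gamma\ge\beta^2$) translates exactly into $W_-(n)\ge 2K$ together with $(W_-(n)-2K)(W_+(n)-2K)\ge 4Q^-(n)Q^+(n)$; the companion inequality $W_+(n)\ge 2K$ is then automatic, because with $Q^\pm(n)>0$ the product condition forces both factors to be strictly positive. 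The maximal admissible $K$ is the relevant root of the resulting quadratic in $K$, which is case~(3); and in cases~(1) and (2) one of $Q^\pm(n)$ vanishes, so $\Gamma f(n)$ and $\widetilde\Gamma_2$ collapse to one-variable forms and $K_{BE}(n)$ is simply the ratio $W_+(n)$ or $W_-(n)$. The main obstacle I anticipate is the algebraic bookkeeping in the second step: expanding $\Gamma_2 f(n)$ without error, and — crucially — confirming that the $c$- and $d$-eliminations genuinely decouple, so that the two completions of the square do not interfere and no interaction term is lost; once this is settled, verifying that the coefficients reassemble precisely into $W_\pm(n)$ and $Q^\pm(n)$ is a finite computation and the concluding semidefiniteness argument is routine. (As the statement is quoted from \cite{HM-23}, one would of course also match the global normalisation of $\Gamma_2$ to the convention used there.)
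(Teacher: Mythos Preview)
The paper does not give its own proof of this statement: it is quoted from \cite[Section 2.1]{HM-23} and used as a black box in the computations of Section~\ref{sec:examples}. Your outline is the standard derivation and is correct in structure: on a path the form $\Gamma_2 f(n)-K\Gamma f(n)$ depends only on $f$ restricted to the $2$-ball around $n$, the second-sphere variables $c=f(n+2)$ and $d=f(n-2)$ enter in decoupled quadratic blocks (coupled only to $a=f(n+1)$ and $b=f(n-1)$ respectively), and eliminating them by completing the square reduces the problem to positive semidefiniteness of a $2\times 2$ form whose diagonal entries give $W_\pm(n)-2K$ and whose off-diagonal gives the $Q^-(n)Q^+(n)$ term. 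Your remark that $W_+(n)\ge 2K$ is automatic once $W_-(n)\ge 2K$ and the product condition hold (since $Q^\pm(n)>0$ forces both factors to share sign) is also correct and explains why the theorem lists only one of the two diagonal inequalities. The only caveat is the one you already flag: the constants must be tracked carefully so that the reduced coefficients match $W_\pm(n)$ exactly, including the endpoint cases.
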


\subsection{Weighted $3$-point graph}

{\it In the following $\alpha$-parametrized weighted three-point birth-death chain $X$ with $X=\{x,y,z\}$ where $x\sim y$ and $y\sim z$, the entropic curvature tends to $- \infty$ (as the parameter $\alpha\to\infty$), while both Bakry-\'Emery curvature and Ollivier Ricci curvature are strictly positive and bounded away from zero uniformly. Note, however, the Ollivier sectional curvature in this example is $-1$. This example was already considered as a counterexample of the Peres-Tetali conjecture about the log-Sobolov inequality for positive Ollivier curvature (see \cite{munch2023ollivier}).}


\medskip

We start with four positive constants $\pi_x,\pi_y,w_1,w_2$ whose values and relations are to be determined later. We also set a parameter $\alpha>1$ independently of aforementioned constants. Now we define a test function $f\in \mathbb{R}^X$ and measures $\rho\in\mathbb{R}_+^X$ and $\pi\in\mathbb{R}_+^X$ whose values are given according to the following tables:
\begin{center}
\begin{tabular}{|c|c c c|}
\hline
    & $x$ & $y$ & $z$ \\
\hline
    $f$& 0 & 1 & $\alpha+1$ \\
    $\rho$ & 1 & 1 & $\exp(-\alpha^2)$ \\
    $\pi$ & $\pi_x$ & $\pi_y$ & $\frac{\exp(\alpha^2)}{\alpha^2}$ \\
\hline
\end{tabular}
\end{center}
Moreover, we set $w_{xy}(=w_{yx})=w_1$ and $w_{yz}(=w_{zy})=w_2$. It is automatically implied by the detailed balance equation that $$Q(x,y)=\frac{w_1}{\pi_x}; \quad Q(y,x)=\frac{w_1}{\pi_y}; \quad Q(y,z)=\frac{w_2}{\pi_y}; \quad
Q(z,y)=\frac{\alpha^2w_2}{e^{\alpha^2}}.$$
{Note that we need $\pi_x \ge w_1$, $\pi_y \ge w_1+w_2$ and $\alpha^2w_2/e^{\alpha^2} \le 1$ to satisfy $\sum_w Q(v,w) = 1$.}

Our first aim is to show that this Markov chain has the entropic curvature tends to $-\infty$ as the parameter $\alpha$ tends to $+\infty$.

Recall that with the above choices of $\rho,f$, the entropic curvature satisfies
$$K_{ent}(X,Q,\pi) \le \frac{\mathcal{B}_\rho(f)}{\mathcal{A}_\rho(f)},$$ where (see \eqref{eq:ArhoGamma} and \eqref{eq:BrhoArhoLaprel})
\begin{align*}
    \mathcal{A}_\rho(f) &= \langle \rho, \Gamma_\rho f \rangle_\pi, \\
    \mathcal{B}_\rho(f) &=\frac{1}{2} \langle \rho, \Delta\Gamma_\rho f \rangle_\pi - \mathcal{A}_\rho(f,\Delta f).
\end{align*}

We first compute $\Gamma_\rho f$ explicitly on each vertex:
\begin{align*}
\Gamma_\rho f(x) &= (f(y)-f(x))^2 Q(x,y) \partial_1\theta(\rho_x,\rho_y)=\frac{w_1}{\pi_x}\cdot\frac{1}{2},\\
\Gamma_\rho f(y) &= (f(x)-f(y))^2 Q(y,x) \partial_1\theta(\rho_y,\rho_x)+ (f(z)-f(y))^2 Q(y,z) \partial_1\theta(\rho_y,\rho_z)\\
&=\frac{w_1}{\pi_y}\cdot\frac{1}{2}+\alpha^2 \frac{w_2}{\pi_y} \frac{\alpha^2-1+e^{-\alpha^2}}{\alpha^4} \le \frac{w_1}{2\pi_y} + \frac{w_2}{\pi_y},\\
\Gamma_\rho f(z) &= (f(y)-f(z))^2 Q(z,y) \partial_1\theta(\rho_z,\rho_y)
= \alpha^2 \cdot \frac{w_2 \alpha^2}{e^{\alpha^2}} \cdot \frac{e^{\alpha^2}-1-\alpha^2}{\alpha^4} \le w_2.
\end{align*}
It follows that
\begin{equation} \label{eq:3pt-calc-1}
    0 \le \mathcal{A}_\rho(f)= \langle \rho, \Gamma_\rho f\rangle_\pi
    = \sum_{v\in X}\rho_v\pi(v)\Gamma_\rho f(v)
    \le w_1+w_2+\frac{w_2}{\alpha^2},
\end{equation}
which is bounded as $\alpha\to\infty$.

Next we compute $\Delta\Gamma_\rho f$:
\begin{align*}
\Delta\Gamma_\rho f(x) 
&= (\Gamma_\rho f(y)-\Gamma_\rho f(x))Q(x,y) \le \left(\frac{w_1}{2\pi_y}+\frac{w_2}{\pi_y}-\frac{w_1}{2\pi_x}\right)\frac{w_1}{\pi_x}\\
\Delta\Gamma_\rho f(y), 
&= (\Gamma_\rho f(x)-\Gamma_\rho f(y))Q(y,x)+(\Gamma_\rho f(z)-\Gamma_\rho f(y))Q(y,z) \\
&\le \left(\frac{w_1}{2\pi_x}-0 \right)\frac{w_1}{\pi_y}+ (w_2-0)\frac{w_2}{\pi_y},
\\
\Delta\Gamma_\rho f(z) 
&= (\Gamma_\rho f(y)-\Gamma_\rho f(z))Q(z,y) \le \left( \frac{w_1}{2\pi_y}+\frac{w_2}{\pi_y}-0\right)\frac{w_2}{\pi(z)}.
\end{align*}
It follows that
\begin{equation} \label{eq:3pt-calc-2}
    \langle \rho, \Delta \Gamma_\rho f \rangle_\pi \le \frac{w_1^2}{2\pi_y}+\frac{w_1w_2}{\pi_y}+w_2^2+ \left(\frac{w_1w_2}{2\pi_y}+\frac{w_2^2}{\pi_y} \right)e^{-\alpha^2},
\end{equation}
which is bounded as $\alpha\to \infty$.

Next we compute $\Delta f$ on each vertex:
\begin{align*}
    \Delta f(x) &= (f(y)-f(x))Q(x,y)=\frac{w_1}{\pi_x},\\
    \Delta f(y) &= (f(x)-f(y))Q(y,x)+(f(z)-f(y))Q(y,z) = -\frac{w_1}{\pi_y}+\alpha\frac{w_2}{\pi_y},\\
    \Delta f(z) &= (f(y)-f(z))Q(z,y)= -\frac{w_2\alpha^3}{e^{\alpha^2}},
\end{align*} 
which , using \eqref{eq:Arhofg}, 
\begin{align} \label{eq:3pt-calc-3}
    \mathcal{A}_\rho (f,\Delta f)
    &=w_{xy}\hat{\rho}_{xy}(f(y)-f(x))(\Delta f(y)-\Delta f(x))+ \nonumber\\
    &\mathrel{\phantom{=}} w_{yz}\hat{\rho}_{yz}(f(z)-f(y))(\Delta f(z)-\Delta f(y)) \nonumber\\
    &= w_1 \left(\frac{\alpha w_2}{\pi_y}-\frac{w_1}{\pi_y}-\frac{w_1}{\pi_x} \right)+ \\
    &\mathrel{\phantom{=}}
    w_2\frac{1-e^{-\alpha^2}}{\alpha^2}(-\alpha)\left(-\frac{w_1}{\pi_y}+\frac{w_2 \alpha}{\pi_y}+\frac{w_2\alpha^3}{e^{\alpha^2}}\right) \nonumber,
\end{align}
which tends to infinity as $\alpha\to\infty$ (or quantitatively, $\frac{1}{\alpha}\mathcal{A}_\rho (f,\Delta f) \to \frac{w_1w_2}{\pi_y}$). From \eqref{eq:3pt-calc-1},\eqref{eq:3pt-calc-2}, \eqref{eq:3pt-calc-3}, we can conclude that the entropic curvature of this Markov chain tends to $-\infty$ as $\alpha \to \infty$.

Our second aim is to choose $\pi_x,\pi_y,w_1,w_2$ suitably such that the corresponding weighted Markov chain has positive Ollivier Ricci curvature and Bakry-\'Emery curvature, uniformly for all $\alpha > 0$. We start by choosing 
\[Q(x,y)=0.28; \quad Q(y,x)=0.1; \quad Q(y,z)=0.3.\]
Next we choose $\pi_x:=1$, which implies by the detailed balances that $\pi_y=Q(x,y)/Q(y,x)=2.8$ and $w_1=Q(x,y)\pi_x=0.28$ and $w_2=Q(y,z)\pi_y=0.84$. Moreover, in order to obtain $\pi_z=\frac{\exp(\alpha^2)}{\alpha^2}$ as in the earlier table, we require $Q(z,y)= \frac{Q(y,z)\pi_y \alpha^2}{\exp(\alpha^2)}=\frac{0.84\alpha^2}{\exp(\alpha^2)}$. For simplicity, let us call $Q(z,y)=\varepsilon$.

With large enough $\alpha$ (so that $\varepsilon<0.5$), the condition $$Q(x,x),Q(y,y),Q(z,z)\ge \frac{1}{2}$$ in Theorem \ref{thm:OL_line_formula} is satisfied. Hence by applying Theorem \ref{thm:OL_line_formula}, we have the following formulae for the Ollivier Ricci curvature:
\begin{align*}
    K_{ORC}(x,y) &=Q(x,y)+Q(y,x)-Q(y,z)=0.08,\\
    K_{ORC}(y,z) &=Q(y,z)+Q(z,y)-Q(y,x)=0.2+\varepsilon.
\end{align*}
Next, by applying Theorem \ref{thm:BE_line_formula}, we have the following formulae for the Bakry-\'Emery curvature:
\begin{align*}
    K_{BE}(x) &= \frac{1}{2}(-Q(y,z)+3Q(y,x)+Q(x,y))=0.14,\\
    K_{BE}(z) &= \frac{1}{2}(-Q(y,x)+3Q(y,z)+Q(z,y))=0.4+ \frac{1}{2}\varepsilon.
\end{align*}
For $K_{BE}(y)$, we note that 
\begin{align*}
    W_-(y) &=3Q(x,y)+Q(y,x)-Q(y,z)=0.64,\\
    W_+(y) &=3Q(z,y)+Q(y,z)-Q(y,x)=0.2+3\varepsilon,
\end{align*}and
$K_{BE}(y)=\max K$ such that $0.64\ge 2K$ and 
\[(0.64-2K)(0.2+3\varepsilon-2K)\ge 0.12.\]
One can check that $K_{BE}(y)\ge 0.004$.

Now we can see that both the Ollivier Ricci curvature on all edges and the Bakry-\'Emery curvature at all vertices are strictly positive and uniformly bounded away from zero as desired.

For Ollivier sectional curvature, note that for any transport plan $\pi$ from $Q(x,\cdot)$ to $Q(y,\cdot)$, since $Q(x,y)<Q(y,z)$, we must have $\pi(x,z)>0$. This means $W_\infty(Q(x,\cdot),Q(y,\cdot))=2$ and $K_{SEC}(x,y)=-1$. Thus Ollivier sectional curvature is negative.

\subsection{Perturbed $C_6$}

{\it The following example of a cycle of length $6$ has strictly positive entropic curvature but at least one edge with strictly negative Ollivier Ricci curvature (and therefore also strictly negative Ollivier sectional curvature) and at least one vertex with strictly negative Bakry-\'Emery curvature.}

\medskip

We start with a Markov chain $(X,Q_0,\pi_0)$ representing a lazy simple random walk on the cycle graph $C_6$, that is, $X=\mathbb{Z}_6$, $Q_0(i,i\pm 1)=q$ for some fixed $q<\frac{1}{4}$, and $Q(i,i)=1-2q>\frac{1}{2}$. Theorem \ref{thm:abellowbdinfdim} guarantees that 
$K_{ent}(X,Q_0,\pi_0) \ge \frac{q}{50\cdot 6^4}$. 

Let $\varepsilon>0$. Consider a perturbed Markov chain $(X,Q,\pi)$ with 
\[
Q(i,j):= \begin{cases}
    q+\varepsilon &,\text{if }(i,j)=(0,5) \\
    1-2q-\varepsilon &,\text{if }(i,j)=(0,0) \\
    Q_0(i,j) &,\text{otherwise}.
\end{cases}
\]
Our perturbation result in Theorem \ref{thm:perturb2} guarantees that, with small enough $\varepsilon$, the entropic curvature $K_{ent}(X,Q,\pi)$ stays strictly positive.

On the other hand, since both Ollivier Ricci curvature and Bakry-\'Emery curvature are locally defined curvatures, we can use the formulas for birth-death processes for their computation on large enough cycles. By Theorem \ref{thm:OL_line_formula}, we have
\[
K_{ORC}(0,1)=Q(0,1)+Q(1,0)-Q(1,2)-Q(0,5)=-\varepsilon.
\]
For Bakry-\'Emery curvature, by Theorem \ref{thm:BE_line_formula}, we have
\begin{align*}
    W_-(0)&=-Q(5,4)+3Q(5,0)+Q(0,5)-Q(0,1)=2q+\varepsilon, \\
    W_+(0)&=-Q(1,2)+3Q(1,0)+Q(0,1)-Q(0,5)=2q-\varepsilon,
\end{align*}
and $K_{BE}(0)=\max K$ such that $2q+\varepsilon-2K\ge 0$ and $(2q+\varepsilon-2K)(2q-\varepsilon-2K) \ge 4q(q+\varepsilon)$. It follows that $K_{BE}(0)<0$.

\subsection{Layer-rescaled prism $C_n \times K_2$ }

{\it The following example has negative entropic and Bakry-\'Emery curvature, but positive Ollivier Ricci curvature and non-negative Ollivier sectional curvature. The Bakry-\'Emery and entropic curvature are both negative because the gradient is distorted compared to the Lipschitz constant appearing in Ollivier curvatures.}

\medskip

Let $(X_0,Q_0,\pi_0)$ be a Markov chain. Let $r_1>r_2>0$ and $q>0$ with $r_1 + q \le 1$. We construct the following Markov chain $(X,Q,\pi)$ such that
$X = X_0 \times \{1,2\}$ and 
\begin{align*}
Q((x,1),(x,2)) = Q((x,2),(x,1)) &= q, \\
Q((x,k),(y,k)) &= r_k Q_0(x,y), \\
\pi(x,k) &= \pi_0(x),
\end{align*}
for all $x,y\in X_0$ and $k=1,2$. 

Let $\psi_0:X_0\to\mathbb{R}$ be an eigenfunction of the Laplacian on $(X_0,Q_0.\pi_0)$ to the eigenvalue $\lambda$, that is, $\Delta \psi_0 +\lambda \psi_0=0$. Define a function $\psi:X\to\mathbb{R}$ as
$$ \psi(x,k) := \psi_0(x) \qquad \forall x,k.$$
We set
$$ \mathcal{E}_0 := \langle \Gamma \psi_0, 1 \rangle_{\pi_0}= \sum_{x \in X_0} \Gamma \psi_0(x) \pi_0(x). $$
Let $\rho_1,\rho_2 > 0$ and define $\rho:X\to\mathbb{R}_+$ as $\rho(x,k) := \rho_k$ for all $x$ and $k$. Then we have
\begin{align*}
\Delta \psi(x,k) &= r_k\Delta \psi_0(x) = - \lambda r_k \psi(x,k),  \\
\Gamma_\rho \psi(x,k) &= r_k \Gamma \psi_0(x), \\ 
\Gamma_\rho(\psi,\Delta \psi)(x,k) &= - \lambda r_k^2 \Gamma \psi_0(x), \\
\langle \Delta \rho, \Gamma_\rho \psi \rangle_\pi &= -\sum_{x\in X_0}(\rho_2-\rho_1)(r_2-r_1)\Gamma \psi_0(x)q\pi_0(x)=q(\rho_2-\rho_1)(r_1-r_2)\mathcal{E}_0, \\
\langle \rho, \Gamma_\rho(\psi,\Delta \psi) \rangle_\pi &= -\lambda(\rho_1 r_1^2 + \rho_2 r_2^2) \mathcal{E}_0, \\
\mathcal{A}_\rho(\psi) &= \langle \rho,\Gamma_\rho \psi \rangle_\pi = \left( \rho_1 r_1 + \rho_2 r_2 \right) \mathcal{E}_0, \\
\mathcal{B}_\rho(\psi) &= \langle \rho, \Gamma_{2,\rho} \psi \rangle_\pi = \frac{1}{2} \langle \Delta \rho, \Gamma_\rho\psi\rangle_\pi - \langle \rho, \Gamma_\rho(\psi,\Delta \psi)\rangle_\pi \\
&= \left( \frac{1}{2} q(\rho_2-\rho_1)(r_1-r_2) + \lambda(\rho_1 r_1^2 + \rho_2 r_2^2) \right) \mathcal{E}_0,
\end{align*} so it follows that 
\[
K_{\theta}(X,Q,\pi) \le \frac{\mathcal{B}_\rho(\psi)}{\mathcal{A}_\rho(\psi)} = \frac{\frac{1}{2} q(\rho_2-\rho_1)(r_1-r_2) + \lambda(\rho_1 r_1^2 + \rho_2 r_2^2)}{\rho_1 r_1 + \rho_2 r_2}.
\]
Choosing $r_1= 2\epsilon > r_2 = \epsilon$ and $q=1/2$, we obtain
$$ K_\theta(X,Q,\pi) = \frac{\frac{1}{4}(\rho_2-\rho_1)+\epsilon\lambda\left(4 \rho_1+\rho_2 \right)}{2\rho_1+\rho_2}, 
$$
which becomes strictly negative for $\rho_1 > \rho_2$ and $\epsilon >0$ sufficiently small. This holds independently of the structure of the original Markov chain $(X_0,Q_0,\pi_0)$ and independently of the choice of the mean $\theta$. Thus we can conclude $K_{ent}(X,Q,\pi)<0$ and $K_{BE}(X,Q,\pi)<0$ simultaneously.

Next, we use formula \eqref{eq:OR_formula} for the computation of Ollivier Ricci curvature. 

We note that 
\begin{align*}
    \Delta f(x,1)&=r_1\Delta_0 f(x,1)+(f(x,2)-f(x,1))q, \\
    \Delta f(x,2)&=r_2\Delta_0 f(x,2)+(f(x,1)-f(x,2))q,
\end{align*}
where $\Delta_0$ is the standard Laplacian with respect to $(X_0,Q_0,\pi_0)$.

For $f\in Lip(1)$, we have $|\Delta_0 f(x,k)| \le 1$, so the formula \eqref{eq:OR_formula} yields
\[
K_{ORC}((x,1) , (x,2)) \ge 2q-(r_1+r_2)\sum_{y\in X_0}Q_0(x,y) \ge 2q-(r_1+r_2) >0.
\]
Moreover, consider $f$ to be the minimizer of \eqref{eq:OR_formula} for $K_{ORC}((x,1) , (y,1))$. Then
\begin{align*}
    K_{ORC}((x,1) , (y,1)) &= \inf_f r_1(\Delta_0 f(x,1)-\Delta_0 f(y,1))+ q(f(x,2)-f(y,2)+1) \\
    &\ge r_1 K_{ORC}(x,y).
\end{align*} Similarly, $K_{ORC}((x,2) , (y,2)) \ge r_2 K_{ORC}(x,y)$.

In particular, if the Markov chain $(X_0,Q_0,\pi_0)$ is chosen to be the simple random walk (with laziness $\frac{1}{2}$) on the cycle $C_n$ with $n\le 5$, it is well known that $K_{ORC}(X_0,Q_0,\pi_0)>0$. This implies Ollivier Ricci curvature on $(X,Q,\pi)$ is strictly positive.

Finally, we compute Ollivier sectional curvature. On any vertical edge $(x,1)\sim (x,2)$, we construct a transport plan $\pi$ from $Q((x,1),\cdot)$ to $Q((x,2),\cdot)$ by $\pi((v,1),(v,2))=\frac{1}{4}r_2$ and $\pi((v,1),(x,1)=\frac{1}{4}(r_1-r_2)$ for $v=x\pm1$ and the rest is transported between $(x,1)$ and $(x,2)$. This means $K_{SEC}((x,1),(x,2))=0$. On any horizontal edge $(x,k)\sim (x+1,k)$, for $k\in\{1,2\}$, we construct a transport plan $\pi$ from $Q((x,k),\cdot)$ to $Q((x+1,k),\cdot)$ simply by inducing the translation map $v\mapsto v+1$ fpr all $v$. This also gives $K_{SEC}((x,k),(x+1,k))=0$. This shows that the layer-rescaled prism $C_5 \times K_2$ has nonnegative Ollivier sectional curvature.

\subsection{Bernoulli-Laplace models}

In this subsection we summarize curvature considerations and the Modified Logarithmic Sobolov Inequality (MLSI) for Bernoulli-Laplace models from the literature. We start with the definition of Bernoulli-Laplace models.

\begin{definition} A \emph{Bernoulli-Laplace model} is  
a finite Markov chain $(X,Q,\pi)$ with the following properties: $X$ is the set of subsets of $[L]:=\{1,\dots,L\}$ of cardinality $N$ with $1 \le N \le L-1$.
The transition rates $Q$ (and the associated unique steady state $\pi$) are defined via a collection $\lambda_j \ge 0$, $j \in [L]$ of ``intensities'', as follows: For $x,y \in X$, $x \neq y$, we define
$$ Q(x,y) = \begin{cases} \lambda_j/L, & \text{if $x \setminus y =\{j\}$ for some $j \in [L]$,} \\ 0, & \text{otherwise,}  \end{cases} $$
and, for $x \in X$
$$ Q(x,x) = 1- \sum_{y \neq x} Q(x,y). $$
The intensities are assumed to be small enough such that
$$ \inf_{x \in X} Q(x,x) \ge \frac{1}{2}. $$
\end{definition}

Stochastically, a Bernoulli-Laplace model represents an interacting particle system with $N$ particles distributed on $L$ sites with the constraint that each site can contain at most one particle and where the particle on site $i$ jumps to an unoccupied sites with jump rate only depending on the $\lambda_i$.

Note that the underlying combinatorial graph of a Bernoulli-Laplace model is the Johnson Graph $J(L,N)$. Bakry-\'Emery curvature for the unweighted Johnson graph was given in \cite{KKRT-16} and \cite{CLP-19}.

To state the Modified Logarithmic Sobolev Inequality, we need to introduce the following functionals for $f,g \in \mathbb{R}^X$ and $\rho \in \mathcal{P}_*(X)$ (that is, $\rho > 0$ with $\sum_{x \in X} \rho(x) \pi(x) = 1$):
\begin{align*}
\mathcal{E}(f,g) &:= \langle \nabla f, \nabla g \rangle_\pi = \frac{1}{2}   \sum_{x,y \in X} Q(x,y) \pi(x) (f(y)-f(x))(g(y)-g(x)), \\
{\mathrm{Ent}}(\rho) &:= \langle \rho, \log\rho\rangle_\pi = \sum_{x \in X} \rho(x)\log \rho(x) \pi(x).
\end{align*}
We say that $(X,Q,\pi)$ satisfies (MLSI) with optimal constant $\alpha_{MLSI} > 0$ where
$$ \alpha_{MLSI} := \inf_{\rho \in \mathcal{P}_*(X)} \frac{\mathcal{E}(\rho,\log \rho)}{\mathrm{Ent}(\rho)}. $$
The following results hold for Bernoulli-Laplace models:
\begin{enumerate}[(i)]
    \item It was shown in \cite[Theorem 5.2]{CDPP-09} that (MLSI) holds with 
    $$ \alpha_{MLSI} \ge \min \lambda - (\max \lambda - \min \lambda).
    $$
    \item Assuming $\max \lambda \le 3 \min \lambda$, it was shown in \cite[Thm. 4.7]{FM-16} that the \emph{entropic curvature} is lower bounded by
    $$ K = \frac{\min \lambda}{2} - \frac{7}{8} \left( \max \lambda - \min \lambda \right). $$
    \item Employing the general relation between entropic curvature and the MLSI constant for any finite Markov chain in \cite[Theorem~1.5]{EM-12}, (ii) implies 
    $$\alpha_{MLSI} \ge 2 K = \min \lambda - \frac{7}{4}(\max \lambda - \min \lambda),$$ 
    which is slightly weaker than (i).
    \item It was discussed at the end of \cite{CMS-25} that employing Ollivier Ricci curvature and sectional curvature gives better curvature (see Theorem \ref{thm:CMS} below) and MLSI constant estimates, which do not depend on $\max \lambda$. Namely, non-negative sectional curvature and \eqref{eq:KORCBLM} below imply the improved inequality
    $$ \alpha_{MLSI} \ge \inf_{x \sim y} K_{ORC}(x,y) \ge  \frac{1}{L} \left( (L-N-1) \lambda_1 + \sum_{i=1}^{N+1} \lambda_i \right) \ge \min \lambda $$
    by \cite[Theorem 4.4]{munch2023ollivier}. Here we assume the following ordering of the intensities
    $$ 0 \le \lambda_1 \le \lambda_2 \le \ldots \le \lambda_L. $$
    Note that even in the case $\min \lambda = 0$, this lower bound is still meaningful, for example, in the case $L=2N$ and $\lambda_i=0$ for $i \le L/4$ and $\lambda_i \ge 1$ for $i > L/4$, we have
    $$ \alpha_{MLSI} \ge \frac{1}{4}. $$
\end{enumerate}    

\begin{theorem} \label{thm:CMS} Let $(X,Q,\pi)$ be a Laplace-Bernoulli model with $N$ particles and $L$ sites and corresponding intensities 
$$ 0 \le \lambda_1 \le \lambda_2 \le \ldots \le \lambda_L. $$
Then $(X,Q,\pi)$ has non-negative Ollivier sectional curvature $\inf_{x \sim y} K_{SEC}(x,y) \ge 0$ and we have 
\begin{equation} \label{eq:KORCBLM}
\inf_{x \sim y} K_{ORC}(x,y) \ge \frac{1}{L}\left( (L-N-1) \lambda_1 + \sum_{i=1}^{N+1} \lambda_i \right). 
\end{equation}
\end{theorem}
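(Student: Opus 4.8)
The plan is to handle the two curvature statements separately, fixing once and for all an edge $\{x,y\}$ of the Johnson graph $J(L,N)$ and writing $x=A\cup\{j\}$, $y=A\cup\{k\}$ with $|A|=N-1$, $j\neq k$ and $j,k\notin A$. Throughout I will use the laziness assumption $Q(\cdot,\cdot)\ge 1/2$, which makes $\Delta f(x)=\sum_{x'\sim x}Q(x,x')(f(x')-f(x))$, and the fact that two $N$-sets are adjacent exactly when their symmetric difference has size $2$.

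\emph{Sectional curvature.} Here I would exhibit an explicit transport plan from $\mu=Q(x,\cdot)$ to $\nu=Q(y,\cdot)$ moving all mass distance $\le 1$, giving $W_\infty(\mu,\nu)\le 1$ and hence $K_{SEC}(x,y)=1-W_\infty(\mu,\nu)\ge 0$. The support of $\mu$ consists of $x$ (mass $Q(x,x)$), the vertices $A\cup\{\ell\}$ with $\ell\notin A\cup\{j\}$ (mass $\lambda_j/L$ each), and the vertices $(A\setminus\{i\})\cup\{j,\ell\}$ with $i\in A$, $\ell\notin A\cup\{j\}$ (mass $\lambda_i/L$ each); the support of $\nu$ is the same with $j$ replaced by $k$. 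For each $i\in A$ and each $\ell\notin A\cup\{j,k\}$ ship $\lambda_i/L$ from $(A\setminus\{i\})\cup\{j,\ell\}$ to $(A\setminus\{i\})\cup\{k,\ell\}$ (symmetric difference $\{j,k\}$, distance $1$), and ship the remaining particle-from-$i$ mass from $(A\setminus\{i\})\cup\{j,k\}$ to itself (distance $0$). What remains is a transport problem supported on $\{x,y\}\cup\{A\cup\{\ell\}:\ell\notin A\cup\{j,k\}\}$, whose two marginals have equal total mass (a short bookkeeping check using the definition of $Q(x,x),Q(y,y)$); since any two of these vertices have symmetric difference of size $\le 2$, they form a clique, so \emph{every} transport plan between the residual marginals moves mass distance $\le 1$. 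Concatenating the two plans proves $W_\infty(\mu,\nu)\le 1$, and taking the infimum over edges gives $\inf_{x\sim y}K_{SEC}(x,y)\ge 0$.

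\emph{Ricci curvature.} Using the formula \eqref{eq:OR_formula}, I must show $\Delta f(x)-\Delta f(y)\ge \frac1L\big((L-N-1)\lambda_1+\sum_{i=1}^{N+1}\lambda_i\big)$ for every $f\in\mathrm{Lip}(1)$ with $f(y)-f(x)=1$. I split both Laplacians into moves of the particle at $j$ (resp.\ $k$) and moves of a particle at some $i\in A$. For the $i\in A$ part, for each $i$ I pair the $x$-term with destination $\ell$ and the $y$-term with destination $\ell$ when $\ell\notin A\cup\{j,k\}$ (the two images differ only in $\{j,k\}$, so $1$-Lipschitzness makes each such paired difference $\ge 0$), and pair the leftover $x$-term (destination $k$) with the leftover $y$-term (destination $j$), both landing at $(A\setminus\{i\})\cup\{j,k\}$, which together with $f(y)-f(x)=1$ contributes exactly $\lambda_i/L$; hence this part is $\ge\frac1L\sum_{i\in A}\lambda_i$. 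For the $j/k$ part set $g(\ell):=f(A\cup\{\ell\})$ for $\ell\in[L]\setminus A$: then $g$ is $1$-Lipschitz on a clique and $g(k)=g(j)+1$, which \emph{forces} $g(\ell)-g(j)\in[0,1]$ for all $\ell$; a direct computation rewrites this part as $\frac{\lambda_j-\lambda_k}{L}\sum_{\ell}(g(\ell)-g(j))+\frac{L-N+1}{L}\lambda_k$, and minimising over $\sum_\ell(g(\ell)-g(j))\in[1,L-N]$ yields $\ge\frac1L\big(\max(\lambda_j,\lambda_k)+(L-N)\min(\lambda_j,\lambda_k)\big)$. Adding the two contributions, $\Delta f(x)-\Delta f(y)\ge\frac1L\big(\sum_{i\in A\cup\{j,k\}}\lambda_i+(L-N-1)\min(\lambda_j,\lambda_k)\big)$; since $|A\cup\{j,k\}|=N+1$ and $\min(\lambda_j,\lambda_k)\ge\lambda_1$ this is $\ge\frac1L\big(\sum_{i=1}^{N+1}\lambda_i+(L-N-1)\lambda_1\big)$, and the infimum over edges gives \eqref{eq:KORCBLM}.

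The step I expect to be the crux is the observation that the boundary condition $g(k)=g(j)+1$ on the clique $\{A\cup\{\ell\}\}$ pins $g(\ell)-g(j)$ to $[0,1]$; without it the minimisation over $\sum_\ell(g(\ell)-g(j))$ would reach $N-L+2$ and produce a strictly weaker, in fact false, bound. The remaining care is purely bookkeeping: identifying which destination indices are "left over" in the pairing arguments, and verifying that the residual marginals have equal mass in the $W_\infty$ construction.
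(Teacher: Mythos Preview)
Your argument is correct. For the sectional curvature, you build essentially the same transport plan as the paper: its matching $\Phi(\tilde x)=\tilde x+y-x$ is exactly your pairing $(A\setminus\{i\})\cup\{j,\ell\}\mapsto(A\setminus\{i\})\cup\{k,\ell\}$, and the residual clique argument is just a repackaging of their treatment of common neighbours.

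The Ricci part is where you genuinely diverge. The paper works on the \emph{primal} side: it writes down an explicit coupling (the same plan used for $K_{SEC}$), observes that all mass moves distance $0$ or $1$, and reads off $K_{ORC}(x,y)$ as the total mass left in place, namely $Q(x,y)+Q(y,x)+\sum_{z\in S_1(x)\cap S_1(y)}Q(x,z)\wedge Q(y,z)$; this gives the \emph{exact} value of $K_{ORC}(x,y)$ edge by edge before taking the infimum. You instead work on the \emph{dual} side via \eqref{eq:OR_formula}, splitting $\Delta f(x)-\Delta f(y)$ into the $i\in A$ part and the $j/k$ part and minimising over $1$-Lipschitz $f$. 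Your clique observation that $g(k)-g(j)=1$ forces $g(\ell)-g(j)\in[0,1]$ is precisely the dual shadow of the paper's $\min$ in $Q(x,z)\wedge Q(y,z)$, and your minimisation over $T=\sum_\ell(g(\ell)-g(j))$ recovers the same edge-wise expression $\frac1L\big(\sum_{i\in A\cup\{j,k\}}\lambda_i+(L{-}N{-}1)\min(\lambda_j,\lambda_k)\big)$. The paper's route is shorter once the transport plan is written down (and certifies optimality for free), while yours avoids constructing and checking a coupling and makes transparent which Lipschitz constraint is binding.
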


\begin{proof}
    We start with the computation of $K_{ORC}(x,y)$ where the labels on vertices $x$ and $y$ are $x=(\underbrace{1,1,...,1}_{N},\underbrace{0,0,...,0}_{L-N})$ and $y=(0,\underbrace{1,1,...,1}_{N},\underbrace{0,0,...,0}_{L-N-1})$. There are two types of vertices $z$ which are common neighbors of $x$ and $y$ (that is, $z\in S_1(x)\cap S_1(y)$):
\begin{itemize}
    \item $z=(0,\underbrace{1,1,...,1}_{N-1 },0,0,0,...,0,1,0...,0)$ where the last $1$ is at position $N+2\le i \le L$. In this case, we have $Q(x,z)=\lambda_1$ and $Q(y,z)=\lambda_{N+1}$.
    \item $z=(\underbrace{1,1,...,1,0,1,...,1}_{N+1},0,0,...,0)$ where the first $0$ is at position $2\le j \le N$. In this case, we have $Q(x,z)=Q(y,z)=\lambda_j$.
\end{itemize}
For vertices $z$ which are not common neighbors of $x$ and $y$, there is a perfect matching 
$\Phi$ from  
$S_1(x)\setminus S_1(y)$ to $S_1(y)\setminus S_1(x)$, namely 
$$\Phi(\tilde{x}):=\tilde{x}+y-x,$$
by applying the tuple-addition. Moreover, we have
$$ Q(x,\tilde x) = Q(y, \Phi(\tilde x)). $$
Hence, there exists a transport plan $\gamma$ from $Q(x,\cdot)$ to $Q(y,\cdot)$ with maximum transport distance $1$, which implies non-negative sectional curvature $K_{SEC}(x,y) \ge 0$. This transport plan is optimal and given by
$$
\gamma(\tilde x,\tilde y) = \begin{cases} 
Q(x,y), &\text{if $\tilde x =\tilde y =y$,} \\
Q(y,x), &\text{if $\tilde x =\tilde y =x$,} \\
Q(x,\tilde x) \wedge Q(y,\tilde y), &\text{if $\tilde x =\tilde y \in S_1(x) \cap S_1(y)$,} \\
Q(x,\tilde x), &\text{if $\tilde x \in S_1(x)\setminus S_1(y)$ and $\tilde y = \Phi(\tilde x)$,} \\
Q(x,\tilde x)-(Q(x,\tilde x) \wedge Q(y,\tilde y)) & \text{if $\tilde x \in S_1(x) \cap S_1(y)$ and $\tilde y =y$,} \\
Q(y,\tilde y)-(Q(x,\tilde x) \wedge Q(y,\tilde y)) & \text{if $\tilde x =x $ and $\tilde y \in S_1(x) \cap S_1(y)$,} \\
c, & \text{if $\tilde x=x$ and $\tilde y =y$,} \\
0, & \text{otherwise.}
\end{cases}
$$
Here, $c \ge 0$ is chosen such that $\sum_{\tilde x,\tilde y \in X} \gamma(\tilde x, \tilde y) = 1$.
The Ollivier-Ricci curvature is therefore the total mass transported by distance $0$, that is,
\begin{align*}
    K_{ORC}(x,y) 
    &= Q(x,y)+Q(y,x)+\sum_{z\in S_1(x)\cap S_1(y)} Q(x,z) \wedge Q(y,z)\\
    &=\frac{1}{L}\left(\lambda_1+\lambda_{N+1}+(L-N-1)\min\lambda + \sum_{j=2}^N \lambda_j\right)\\
    &=\frac{1}{L}\left( (L-N-1) \lambda_1 + \sum_{i=1}^{N+1} \lambda_i \right).
\end{align*}
Analogously, we can compute Ollivier-Ricci curvature of any edge $u\sim v$ as
\begin{align*}
    K_{ORC}(u,v) &=\frac{1}{L}\left( (L-N-1) \lambda_{\sigma(1)} + \sum_{i=1}^{N+1} \lambda_{\sigma(i)} \right) \\
    &\ge \frac{1}{L}\left( (L-N-1) \lambda_{1} + \sum_{i=1}^{N+1} \lambda_i \right),
\end{align*} 
where $\sigma \in \mathcal{S}_L$ is a suitable permutation.
\end{proof}



\section{Some open questions}
\label{sec:questions}

We like to finish our paper with a few question for further investigation:
\begin{itemize}
\item It might be interesting to investigate entropic curvature of the simple random walk on the dodecahedron. We expect positive entropic curvature. It this would be the case, we would have an example of a Markov chain with positive entropic curvature and strictly negative Bakry-\'Emery curvature everywhere.
\item In our considerations concerning cycles, an optimal $\rho$ seems to be a Gaussian-type distribution. This raises the questions whether, for general Markov chains, an optimal $\rho$ satisfies $\rho = P_t \rho_0$ with some other positive function $\rho_0$ and $P_t$ the heat semigroup, and how large can $t$ be chosen.  
\item In the case of Bakry-\'Emery curvature, we have $\lim_{n \to 0} K_n(x) = -\infty$. Does a similar fact hold for the non-local entropic curvature as dimension tends to $-\infty$?
\item Which combinatorial graphs with $n$ vertices, equipped with the simple random walk, have entropic curvature (with infinite dimension) closest to $0$? Is there any better candidate than the cycle (which has positive entropic curvature)?
\item Do all Ricci flat or (S)-Ricci flat graphs, as defined in Definition \ref{def:ricciflat}, have non-negative entropic curvature, as it is the case for Ollivier Ricci and Bakry-\'Emery curvature?
\item Given a Markov chain and a small interval, what is the computational complexity for the decision problem whether its entropic curvature is within this interval? Is this an NP-hard problem?
\item For the entropic curvature estimate of the Bernoulli-Laplace model, the article \cite{FM-16} required the restriction $\frac{\max \lambda}{\min \lambda} \le 3$. It would be interesting to investigate both entropic curvature and Bakry-\'Emery curvature in the case when $\frac{\max \lambda}{\min \lambda}$ is large.
\end{itemize}

{\bf{Acknowledgements:}} We thank Matthias Erbar for stimulating discussions. Shiping Liu is supported by the National Key R and D Program of China 2020YFA0713100 and the National Natural Science Foundation of China No. 12031017. Supanat Kamtue is supported
by Shuimu Scholar Program of Tsinghua University No. 2022660219.

\appendix

\printbibliography

\end{document}